\newtheorem{theorem}{Theorem}[section]
\newtheorem{corollary}[theorem]{Corollary}
\newtheorem{lemma}[theorem]{Lemma}
\numberwithin{equation}{section}
\theoremstyle{definition}
\theoremstyle{remark}
\newtheorem{remark}[theorem]{Remark}
\newtheorem{remarks}[theorem]{Remarks}
\newcommand{\R}{{\mathbb R}}
\newcommand{\Z}{{\mathbb Z}}
\newcommand{\ZP}{{\mathbb Z}_+}
\newcommand{\RP}{{\mathbb R}_+}
\newcommand{\bbX}{{\mathbb X}}
\newcommand{\Exp}{{\mathbb E}}
\renewcommand{\Pr}{{\mathbb P}}
\newcommand{\1}{{\mathbf 1}}
\newcommand{\eps}{\varepsilon}
\newcommand{\re}{{\mathrm{e}}}
\newcommand{\rc}{{\mathrm{c}}}
\newcommand{\ud}{{\mathrm d}}
\newcommand{\F}{{\mathcal F}}
\newcommand{\as}{\ \text{a.s.}}
\newcommand{\io}{\ \text{i.o.}}
\newcommand{\bigmid}{\; \biggl| \;}
\def\namedlabel#1#2{\begingroup  
    (#2)%
    \def\@currentlabel{#2}%
    \phantomsection\label{#1}\endgroup
}
\title{Non-homogeneous random walks on~a~semi-infinite~strip}
\author{Nicholas Georgiou\footnote{Department of Mathematical Sciences, Durham University, South Road, Durham, DH1 3LE, U.K.}${\ }^{,}$\footnote{{\it Email address:} \url{nicholas.georgiou@durham.ac.uk}} \quad Andrew R. Wade${}^{*,}$\footnote{{\it Email address:} \url{andrew.wade@durham.ac.uk}}}
\begin{document}

\maketitle

\begin{abstract}
We study the asymptotic behaviour of
Markov chains $(X_n,\eta_n)$ on 
$\ZP \times S$, where $\ZP$ is the non-negative integers and $S$ is a finite set.
Neither coordinate is assumed to be Markov. We assume a moments bound on the jumps of $X_n$, and that, roughly speaking,
$\eta_n$ is close to being Markov when $X_n$ is large.
This departure from much of the literature, which assumes that $\eta_n$ is itself a Markov chain,
enables us to probe precisely the recurrence phase transitions by assuming asymptotically zero drift for $X_n$ given $\eta_n$.
We give a recurrence classification in terms of increment moment parameters for $X_n$ and the stationary distribution for the large-$X$
limit of $\eta_n$.  In the null case we also provide a weak convergence result, which demonstrates a form of asymptotic independence between $X_n$ (rescaled) and $\eta_n$. 
Our results can be seen as generalizations of Lamperti's results for non-homogeneous random walks on $\ZP$ (the case where $S$ is a singleton). Motivation arises from modulated queues
or processes with hidden variables where $\eta_n$ tracks an internal state of the system.
\end{abstract}

\smallskip
\noindent
{\em Keywords:} Non-homogeneous random walk; recurrence classification; weak limit theorem; Lamperti's problem; modulated queues; correlated random walk.

\noindent
{\em 2010 Mathematics Subject Classifications:}
60J10 (Primary), 60F05, 60F15, 60K15, 60K25 (Secondary).

\section{Introduction}

There are many applications that naturally give rise to Markov processes 
 on a product state-space $\bbX \times S$ where $S$
describes some operating regime or internal state of the system, which 
influences the motion of the process in the primary space $\bbX$. Important classes
of examples include, among others,
\begin{itemize}
\item modulated queues, in which $S$ may contain operating states of the servers or other auxiliary information such as the size of a retrial buffer, as arise in various applications such as those described by Neuts in \cite{neuts};
\item regime-switching processes in mathematical finance or ecology,  where $S$ may contain market or other environmental information;
\item physical processes with internal degress of freedom, where $S$ may describe internal energy or momentum states of a particle,
such as adopted by Sinai as a tool
for studying the Lorentz gas (see e.g.~\cite{ks}), or exemplified by the so-called correlated or persistent random walk.
\end{itemize}
In several of the key examples, the $S$-component of the process is `hidden', and the main interest is in the asymptotic behaviour
of the $\bbX$-component of the process.

In the most classical setting, the projection of the process onto $S$ is itself Markovian. 
In this case, the queueing models become \emph{Markov-modulated} \cite{neuts}, while other examples
fit into the class of \emph{Markov random walks} \cite{hp}.
This case also includes
processes that can be represented as \emph{additive functionals of Markov chains} \cite{rogers}. Such models pose a variety
of mathematical questions, which have been studied rather deeply over several decades using various techniques that take advantage
of the additional Markov structure, and much is now known. 
 
Much less is known when the process projected onto $S$ is \emph{not} Markovian: the main focus
of the present work is to replace the Markovian assumption by a weaker (asymptotic)
condition that provides sufficient structure. This 
 relaxation is necessary to probe
more intimately the recurrence-transience phase transition for these models, since the natural
setting (paralleling the classical work of Lamperti) is to suppose that the law of the process
is \emph{non-homogeneous} in $\bbX$, in particular, the mean drift of the $\bbX$-component
of the process will be asymptotically zero. This non-homogeneity precludes, in general, the $S$-component
of the process from being Markovian, but admits our weaker conditions. 

To avoid technicalities, yet provide a setting rich enough to explore
many interesting phenomena, we take $\bbX$ to be the countable set $\ZP := \{0,1,2,\ldots\}$
and take $S$ to be finite. These models are already of interest for numerous applications,
and there is an existing literature devoted to random walks on half strips ($\ZP \times \{0,1,\ldots, m\}$) or strips ($\Z \times \{ 0,1,\ldots,
m\}$): see \cite{fmm,malyshev,falin1,falin2} and references
therein.  

As an example consider the following queueing model.  
A queue is served by a single
server and experiences arrivals at rate $\lambda$; the service rate is modulated via an internal state of the server $\eta_n$, as well as the length of the queue $X_n$
(in discrete time, i.e., in terms of the jump process).  Allowing the service rate to depend on the queue length distinguishes this model from the class of \emph{semi-Markov} queues~\cite{neuts}. 
 When $(X_n,\eta_n) = (x,i)$, $x \geq 1$, the service rate is $\rho_i (x) = \rho \left( 1 - \frac{2c_i}{x} \right)$, where $c_i$, $i\in S$ 
are parameters of model with $| c_i | < 1/2$. In the case where $c_i \equiv 0$ for all $i$, the internal states of the server are indistinguishable and the model is simply (the jump process of)
 an $M/M/1$
queue with arrival rate $\lambda$ and service rate $\rho$; the critical case from the point of view of recurrence and transience is $\rho =\lambda$,
and so that is the most interesting setting to perturb with non-zero $c_i$. So we take $\rho =\lambda$ from now on. The specification of the model
is completed by stipulating that 
whenever an arrival (departure) occurs the internal state of the server transitions
according to the stochastic matrix $(a_{ij})$ ($(b_{ij})$). In other words, given $(X_n,\eta_n) = (x,i)$, $x \geq 1$,  
\[
(X_{n+1},\eta_{n+1}) = \begin{cases}
(x+1,j) &\text{with probability } \frac{1}{2 (1- \frac{c_i}{x}) } a_{ij} ; \\[1em]
(x-1,j) &\text{with probability } \frac{1- \frac{2c_i}{x} }{2 (1- \frac{c_i}{x}) } b_{ij} .
\end{cases}
\]
Given $(X_n, \eta_n ) = (0,i)$, $(X_{n+1} , \eta_{n+1} ) = (1, j)$ with probability $a_{ij}$.

In general, $(\eta_n)$ is not itself a Markov chain, so this model falls outside the
usual Markov-modulated queue framework. However, for large queue lengths the probabilities of 
arrival and 
 departure are approximately equal, and so
the $\eta_n$ process should be well approximated by the Markov chain on $S$ with transition matrix $M_{ij} =
\frac{1}{2}(a_{ij} +b_{ij})$.  Under the condition that the matrix $M$ be irreducible, our results
determine conditions for transience and recurrence in terms of the stationary distribution of the
chain with transition matrix $M$ and the constants $c_i$.

\section{Model and main results}
\label{sec:model}

We now describe precisely our model. Our state-space is the \emph{half-strip} $\ZP \times S$, where $S$ is finite and nonempty;
for $k \in S$, we call the subset $\ZP \times \{ k \}$ a \emph{line}.
We consider an irreducible Markov chain $(X_n,\eta_n) \in \ZP \times S$, with transition probabilities
\begin{equation}
\label{eqn:transition_probs}
\Pr[(X_{n+1},\eta_{n+1})=(y,j) \mid (X_n,\eta_n) = (x,i)] = p(x,i,y,j),
\end{equation}
and provide conditions for recurrence/transience of $(X_n)$,
in a sense that we explain below. Throughout we use the notation $\F_n := \sigma(X_0,\eta_0,\ldots, X_n,\eta_n)$
and $\RP := [0,\infty)$.

 The process $(X_n)$ is typically not itself a Markov chain; under our standing assumptions,  however, it does
inherit the recurrence/transience dichotomy from $(X_n,\eta_n)$, as the following result shows.
\begin{lemma}\label{Xn-recur-trans}
Exactly one of the following holds:
\begin{enumerate}
[label=(\roman{*})]
\item If $(X_n,\eta_n)$ is recurrent, then $\Pr{[X_n = 0 \io ]} = 1$. 
\item If $(X_n,\eta_n)$ is transient, then $\Pr{[X_n = 0 \io ]} = 0$, and $X_n \to \infty$ a.s.
\end{enumerate}
In the former case, we call $(X_n)$ recurrent, and in the latter case, we call $(X_n)$ transient.
\end{lemma}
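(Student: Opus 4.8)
The plan is to reduce everything to the standard recurrence/transience dichotomy for the irreducible Markov chain $(X_n,\eta_n)$ on the countable state-space $\ZP \times S$, combined with the finiteness of $S$. The ``exactly one'' clause is immediate: an irreducible Markov chain on a countable state-space is either recurrent (every state visited infinitely often, a.s., from any starting point) or transient (every state visited only finitely often, a.s.), and these alternatives are mutually exclusive and exhaustive. So the whole content of the lemma is to translate each alternative, stated for the full chain $(X_n,\eta_n)$, into the corresponding statement about the $X$-coordinate alone.

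For part~(i), suppose $(X_n,\eta_n)$ is recurrent. Fix any $i_0 \in S$; then the state $(0,i_0)$ is visited infinitely often with probability one. Each such visit contributes an index $n$ with $X_n = 0$, so $\Pr[X_n = 0 \io] = 1$ follows at once.

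For part~(ii), suppose $(X_n,\eta_n)$ is transient. The key observation is that for a transient irreducible chain not only is each individual state visited finitely often a.s., but so is every \emph{finite} subset of the state-space, since the number of visits to such a set is a finite sum of a.s.-finite quantities. I would first apply this to the finite set $\{0\} \times S$ (finite because $S$ is) to conclude that $\{ n : X_n = 0\}$ is a.s.\ finite, that is, $\Pr[X_n = 0 \io] = 0$. To upgrade this to $X_n \to \infty$, I would apply the same observation to the finite slabs $A_K := \{0,1,\ldots,K\} \times S$ for each $K \in \N$: transience gives that a.s.\ the chain lies in $A_K$ only finitely often, so a.s.\ there is a (random) $N_K$ with $X_n > K$ for all $n \geq N_K$. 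Intersecting these probability-one events over the countable family $K \in \N$ yields $X_n \to \infty$ a.s.

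The argument is essentially bookkeeping on top of textbook facts about irreducible chains, so I do not anticipate a genuine analytic obstacle. The only place requiring care is the passage from ``each slab $A_K$ is exited permanently'' to $X_n \to \infty$: here the finiteness of $S$ (which makes each $A_K$ a finite set, so that transience applies) and the countable intersection of probability-one events over $K \in \N$ are precisely what make the conclusion go through.
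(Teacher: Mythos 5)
Your argument is correct and is precisely the standard one the authors have in mind: the paper explicitly omits the proof of this lemma as ``standard'', and your reduction to the recurrence/transience dichotomy for the irreducible chain $(X_n,\eta_n)$, together with the observation that finiteness of $S$ makes each slab $\{0,\ldots,K\}\times S$ a finite set visited only finitely often in the transient case, is exactly the intended bookkeeping. No gaps.
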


Similarly, a natural distinction between positive- and null-recurrence holds.

\begin{lemma}\label{Xn-positive-recur}
There exists a (unique) measure $\nu$ on $\ZP$ such that
\begin{equation}
\label{X-ergodic}
 \lim_{n \to \infty} \frac{1}{n} \sum_{k=0}^{n-1} \1 \{ X_k = x \} = \nu (x), \as ,\end{equation}
for any $x \in \ZP$. Exactly one of the following holds:
\begin{enumerate}
[label=(\roman{*})]
\item If $(X_n, \eta_n)$ is null, then $\nu (x) =0$ for all $x \in \ZP$. 
\item If $(X_n, \eta_n)$ is positive-recurrent, then $\nu(x) >0$ for all $x \in \ZP$ and $\sum_{x \in \ZP} \nu (x) =1$.
\end{enumerate}
If $(X_n)$ is recurrent, then we say that it is null-recurrent or positive-recurrent according
to which of (i) or (ii) holds.
\end{lemma}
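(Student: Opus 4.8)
The plan is to establish the ergodic averaging identity \eqref{X-ergodic} first and then read off the dichotomy between cases (i) and (ii) from the nature of the limiting measure. Since $(X_n,\eta_n)$ is an irreducible Markov chain on the countable state-space $\ZP \times S$, the ergodic theory for Markov chains applies directly to the pair. In particular, for any fixed reference state $(x_0,i_0)$, the empirical occupation frequencies
\[
\frac{1}{n} \sum_{k=0}^{n-1} \1 \{ (X_k,\eta_k) = (x,i) \} \toas \pi(x,i)
\]
where $\pi$ is the (possibly zero) invariant measure associated with the chain, normalized so that $\sum_{(x,i)} \pi(x,i) \in \{0,1\}$ depending on whether the chain is null or positive-recurrent. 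The standard statement is that, starting from any initial state, this limit equals $1/\Exp[\text{return time to }(x,i)]$ almost surely, with the convention that the limit is $0$ when the expected return time is infinite. First I would invoke this classical result to define $\nu(x) := \sum_{i \in S} \pi(x,i)$, and since $S$ is finite the sum of finitely many almost-sure limits is again an almost-sure limit, giving \eqref{X-ergodic} with $\nu(x) = \sum_{i \in S} \pi(x,i)$.

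Next I would verify uniqueness and the claimed trichotomy of behaviour. Uniqueness of $\nu$ is immediate because the left-hand side of \eqref{X-ergodic} is a specified almost-sure limit, so any two candidate measures must agree. For the dichotomy, I would use that irreducibility forces the invariant measure $\pi$ to be everywhere positive or everywhere zero: if $(X_n,\eta_n)$ is null (recurrent or transient), then by definition the expected return times to every state are infinite, so $\pi(x,i) = 0$ for all $(x,i)$, whence $\nu(x) = 0$ for all $x$, giving case (i). If $(X_n,\eta_n)$ is positive-recurrent, then $\pi(x,i) > 0$ for all states and $\sum_{(x,i)} \pi(x,i) = 1$; summing over the finite set $S$ yields $\nu(x) > 0$ for every $x \in \ZP$, and
\[
\sum_{x \in \ZP} \nu(x) = \sum_{x \in \ZP} \sum_{i \in S} \pi(x,i) = 1 ,
\]
establishing case (ii). That exactly one case holds follows from the recurrence/transience and null/positive dichotomy for the irreducible chain $(X_n,\eta_n)$ itself.

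The main subtlety, rather than a genuine obstacle, lies in the transient regime: a priori the almost-sure ergodic limit $1/\Exp[\text{return time}]$ is established cleanly for recurrent chains, and one must confirm that it continues to hold (with value $0$) when the chain is transient. Here I would appeal to Lemma~\ref{Xn-recur-trans}, which in the transient case gives $X_n \to \infty$ a.s.; consequently for each fixed $x$ the indicator $\1\{X_k = x\}$ is eventually zero along almost every trajectory, so the Cesàro average in \eqref{X-ergodic} tends to $0$ directly, matching $\nu(x) = 0$. This dispatches the transient part of the null case without needing the full strength of the occupation-time theorem. Thus the only case requiring the Markov-chain ergodic theorem in its quantitative form is recurrence, where the dichotomy between positive and null recurrence is exactly the finiteness or infiniteness of the mean return time, and the finiteness of $S$ transfers this cleanly from the pair $(X_n,\eta_n)$ to the projection $X_n$.
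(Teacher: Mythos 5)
Your proposal is correct: the paper explicitly omits this proof as standard, and your argument (the a.s.\ occupation-frequency ergodic theorem for the irreducible chain $(X_n,\eta_n)$, summed over the finite fibre $S$ to define $\nu(x)=\sum_{i\in S}\pi(x,i)$, with the transient case dispatched via Lemma~\ref{Xn-recur-trans}) is exactly the standard route the authors intend. Your explicit care over the transient regime, where the quantitative ergodic theorem is usually stated only for recurrent chains, is the right point to flag and is handled correctly.
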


The proofs of Lemmas \ref{Xn-recur-trans} and \ref{Xn-positive-recur} are standard and are omitted.

In the cases that
we consider, we will assume that the displacement of the $X$-coordinate has bounded $p$-moments for some $p < \infty$:
\begin{description}
\item
[\namedlabel{ass:X-diff-mom-p}{B$_\textit{p}$}]
There exists a constant $C_p < \infty$ such that $\Exp{[|X_{n+1}-X_n|^{p} \mid \F_n ]} \leq C_p, \as\ \forall n$.
\end{description}
In particular, \eqref{ass:X-diff-mom-p} for some $p>4$ will suffice for all of our results,
while for some of our results $p>1$ is sufficient.

Define $q_x(i,j) = \sum_{y \in \ZP} p(x,i,y,j)$.  We also assume:
\begin{description}
\item
[\namedlabel{ass:lim-q}{Q$_\infty$}]
$\lim_{x \to \infty} q_x(i,j) = q(i,j)$ exists for all $i,j \in S$, and $(q(i,j))$ is an irreducible stochastic matrix.
\end{description}

Note that since $\sum_{j \in S} q_x(i,j) = 1$, the limit in \eqref{ass:lim-q} is necessarily stochastic; however the irreducibility of $(q(i,j))$ does not follow from the irreducibility of $(q_x(i,j))$ for all $x \in \ZP$. For some of our results, it is necessary to assume a stronger condition than \eqref{ass:lim-q} that controls the rate of convergence of $q_x(i,j)$, namely:
\begin{description}
\item
[\namedlabel{ass:lim-q+}{Q$_\infty^\textrm{+}$}]
There exists $\delta_0 >0$ such that $ \max_{i,j \in S} |q_x (i,j) - q(i,j) | = O(x^{-\delta_0})$ as $x \to \infty$,
 and $(q(i,j))$ is an irreducible stochastic matrix.
\end{description}

Given \eqref{ass:lim-q}, we
 define $(\eta^\star_n)$ to be a Markov chain on $S$ with transition probabilities given by $q(i,j)$.  Since $(\eta^\star_n)$ is irreducible and finite
 there exists a unique stationary distribution $\pi$ on $S$ with $\pi (j) >0$ for all $j \in S$ and satisfying $\pi(j) = \sum_{i \in S} \pi(i) q(i,j)$.

\begin{remark}
\label{rem:homogeneity}
 A sufficient condition for \eqref{ass:lim-q+} is that there exists $x_0 \in \ZP$ such that 
\begin{description}
\item
[\namedlabel{ass:homogeneous_case}{H}] 
$p(x,i,y,j) = r(y-x , i , j)$
\end{description}
for all $x \geq x_0$,
i.e.,  for all $x$ large enough, the transition probabilities depend on $x$ and $y$ only through $y-x$. Then, $q_x ( i,j) = q(i,j) = \sum_{z \geq -x_0} r (z,i,j)$ for all $x \geq x_0$.
The homogeneity condition \eqref{ass:homogeneous_case} plays an important role in much of the existing literature, but is too restrictive for our purposes.
We discuss  \eqref{ass:homogeneous_case} and some of its consequences, including the connection to the theory of additive functionals
of Markov chains, in Section \ref{sec:additive_functionals} below. For now, we remark that if \eqref{ass:homogeneous_case} holds for all $x \geq x_0$,
then necessarily  $X_{n+1} - X_n$ is uniformly bounded below (by $-x_0$).
\end{remark}

We denote the moments of the displacements in the $X$-coordinate by
\[
\mu_k(x,i) := \Exp[ (X_{n+1}-X_n)^k \mid X_n = x, \eta_n = i] = \sum_{j \in S} \sum_{ y \in \ZP} (y-x)^k p(x,i,y,j) ; 
\]
then $\mu_1$ is well defined provided \eqref{ass:X-diff-mom-p} holds for some $p \geq 1$, while $\mu_2$ is finite if \eqref{ass:X-diff-mom-p} holds for some $p \geq 2$.
Our results will apply to the following two cases:
\begin{description}
\item
[\namedlabel{ass:mu1-const}{M$_\textrm{C}$}] There exist $d_i \in \R$ such that for all $i \in S$,
as $x \to \infty$,
 $\mu_1(x,i) = d_i + o(1)$;
\item
[\namedlabel{ass:mu-lamperti}{M$_\textrm{L}$}]
There exist $c_i \in \R$ and $s_i^2 \in \RP$, with at least one $s_i^2$ nonzero, such that for all $i \in S$,
as $x \to \infty$,
$\mu_1(x,i) = \frac{c_i}{x} + o(x^{-1})$ and $\mu_2(x,i) = s_i^2 +o(1)$.
\end{description}
Since $S$ is finite, the implicit constants in the $x \to \infty$ error terms in these expressions (and similar ones later on)
may be chosen uniformly over $i$.
Just as above, some of our results will require a stronger assumption than \eqref{ass:mu-lamperti} that controls the error terms as a function of $x$, namely:
\begin{description}
\item
[\namedlabel{ass:mu-lamperti+}{M$_\textrm{L}^\textrm{+}$}]
There exists $\delta_1 > 0$ such that, as $x \to \infty$,
\[
\mu_1(x,i) = \frac{c_i}{x} + O(x^{-1-\delta_1}) \text{ and } \mu_2(x,i) = s_i^2 + O(x^{-\delta_1}).
\]
\end{description}

Next we state our main results. The first two are concerned with the classification of the process as transient, null-recurrent, or positive-recurrent.
Of these, first we consider the case where each line is associated with a drift that is asymptotically constant, and where at least one of these
constants is nonzero.

\begin{theorem}
\label{thm:constant-drifts}
Suppose that  \eqref{ass:X-diff-mom-p} holds for some $p>1$, and conditions \eqref{ass:mu1-const} and \eqref{ass:lim-q} hold. Then the following classification applies.
\begin{itemize}
\item[(i)] If $\sum_{i \in S} d_i \pi (i) > 0$, then $X_n$ is transient.
\item[(ii)] If $\sum_{i \in S} d_i \pi (i) < 0$, then $X_n$ is positive-recurrent.
\end{itemize}
\end{theorem}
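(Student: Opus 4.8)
The plan is to reduce the classification of the non-Markovian process $(X_n)$ to a one-dimensional semimartingale criterion applied to a Lyapunov function that absorbs the dependence of the drift on the internal coordinate $\eta_n$. By Lemmas \ref{Xn-recur-trans} and \ref{Xn-positive-recur}, the irreducible joint chain $(X_n,\eta_n)$ passes its recurrence type to $X_n$, so it suffices to classify the joint chain on $\ZP \times S$.

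First I would remove the $i$-dependence of the asymptotic drift. Writing $\bar d := \sum_{i \in S} d_i \pi(i)$ and $Q := (q(i,j))$, the vector $(d_i - \bar d)_{i \in S}$ is orthogonal to $\pi$ by definition of $\bar d$, so by the Fredholm alternative for the finite irreducible stochastic matrix $Q$ (for which $I-Q$ has kernel spanned by $\1$ and left null vector $\pi$) the Poisson equation
\[
g(i) - \sum_{j \in S} q(i,j) g(j) = d_i - \bar d, \qquad i \in S,
\]
admits a solution $g : S \to \R$. Since $S$ is finite, $g$ is bounded, and after adding a constant I may assume $g \geq 0$. Next I set $f(x,i) := x + g(i)$, a bounded perturbation of $x$: thus $f \geq 0$, the sublevel set $\{ f \leq r \}$ is finite for every $r$ (as $S$ is finite), and $f(x,i) \to \infty$ exactly when $x \to \infty$.

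The point of this construction is the drift computation. Conditioning on $(X_n,\eta_n) = (x,i)$ and using that $\sum_{y} p(x,i,y,j) = q_x(i,j)$,
\[
\Exp[f(X_{n+1},\eta_{n+1}) - f(X_n,\eta_n) \mid X_n = x, \eta_n = i] = \mu_1(x,i) + \sum_{j \in S} q_x(i,j) g(j) - g(i).
\]
By \eqref{ass:mu1-const} the first term tends to $d_i$, while \eqref{ass:lim-q} together with the boundedness of $g$ and finiteness of $S$ gives $\sum_j q_x(i,j) g(j) \to \sum_j q(i,j) g(j)$; invoking the Poisson equation, the whole expression equals $\bar d + o(1)$ as $x \to \infty$, uniformly in $i$ because $S$ is finite. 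Moreover $|f(X_{n+1},\eta_{n+1}) - f(X_n,\eta_n)| \leq |X_{n+1}-X_n| + 2\|g\|_\infty$, so \eqref{ass:X-diff-mom-p} yields a uniform bound on the conditional $p$-th moment of the increments of $f$ for the same $p>1$.

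Finally I would feed $f$ into the standard semimartingale criteria. If $\bar d < 0$, there is $x_0$ with drift $\leq \bar d/2 < 0$ on $\{x \geq x_0\}$; since the complementary set $\{x < x_0\} \times S$ is finite and the one-step increments have bounded mean there (by \eqref{ass:X-diff-mom-p}), Foster's criterion gives positive-recurrence of $(X_n,\eta_n)$, whence $X_n$ is positive-recurrent by Lemma \ref{Xn-positive-recur}. If $\bar d > 0$, there is $x_0$ with drift $\geq \bar d/2 > 0$ on $\{x \geq x_0\}$, which combined with the bounded $p$-th moment increments ($p>1$) is exactly the hypothesis of the Lamperti-type transience criterion, giving transience of $(X_n,\eta_n)$ and hence of $X_n$ by Lemma \ref{Xn-recur-trans}. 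The substantive step is the Poisson-equation construction of $g$, which converts the $\eta$-modulated state-dependent drift into the single averaged drift $\bar d$; after that, the main technical care is that the transience half must invoke a criterion valid under only $(1+\varepsilon)$-moments rather than finite variance.
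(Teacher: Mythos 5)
Your proof is correct, but it takes a genuinely different route from the paper. The paper never introduces a Poisson equation: it builds an embedded Markov chain $Y_n = X_{\tau_n}$ by sampling $X$ at the successive returns of $\eta$ to a fixed reference state, proves (via a maximal coupling of $\eta_n$ with the limit chain $\eta^\star_n$, exponential tails for the excursion durations, and occupation-time estimates) that $\Exp[Y_{n+1}-Y_n \mid Y_n = x] = \pi(0)^{-1}\sum_{i} d_i\pi(i) + o(1)$ with uniformly bounded $(1+\eps)$-moments (Lemma \ref{lem:moments-constant-drifts}), and then applies Lemma \ref{lem:transience} and Foster's criterion to $(Y_n)$, transferring the conclusion to $(X_n)$ via Lemma \ref{lem:X-Y}. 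Your corrector $g$ solving $(I-Q)g = (d_i - \bar d)_{i\in S}$ performs the same averaging against $\pi$ in one algebraic step (solvability is exactly the orthogonality $\sum_i \pi(i)(d_i-\bar d)=0$), and the Lyapunov function $f(x,i)=x+g(i)$ then works directly on the full chain $(X_n,\eta_n)$, bypassing the coupling, excursion-duration and occupation-time lemmas entirely; for this theorem in isolation your argument is shorter and self-contained. Two points of care. First, $f(X_n,\eta_n)$ is not itself a Markov chain on $\ZP$, so the transience half must be invoked in the semimartingale form (hypotheses on $\Exp[f(\Xi_{n+1})-f(\Xi_n)\mid \Xi_n]$ for the chain $\Xi_n=(X_n,\eta_n)$, with $(1+\eps)$-moment control), as in \cite{mai}; the paper's Lemma \ref{lem:transience} is stated only for Markov chains on $\ZP$, though its Lyapunov-function proof covers your setting, and you correctly flag that finite variance is not available. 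Second, the trade-off is that the paper's excursion machinery is reused wholesale for the Lamperti-type classification (Theorem \ref{thm:lamperti_rough_then_sharp}) and the weak limit (Theorem \ref{thm:weak_limit}), where a corrector approach would have to track the $O(1/x)$ interaction between $g$ and the drift, and the rate of convergence of $q_x$ to $q$, much more delicately.
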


In the special case of \eqref{ass:lim-q} in which $q_x \equiv q$ does not depend on $x$,  Theorem \ref{thm:constant-drifts} is contained
in Theorem 3.1.2 of Fayolle \emph{et al.}~\cite{fmm}, who imposed, in part,
  an assumption
of a uniform lower bound on $X_{n+1} - X_n$. 
In the  generality of \eqref{ass:lim-q}, part (ii) is contained in a paper of Falin \cite{falin1}, who
also stated a version of part (i) assuming that \eqref{ass:homogeneous_case} holds for $x$ large enough.

The next result deals with the case of drift conditions of Lamperti-type. 

\begin{theorem}
\label{thm:lamperti_rough_then_sharp}
Suppose that  \eqref{ass:X-diff-mom-p} holds for some $p>2$, and conditions   \eqref{ass:lim-q} and \eqref{ass:mu-lamperti} hold.
The following sufficient conditions apply.
\begin{itemize}
\item If $\sum_{i \in S} (2c_i - s_i^2)\pi(i) > 0$, then $X_n$ is transient.
\item If  $| \sum_{i \in S} 2c_i \pi (i) | < \sum_{i \in S} s_i^2 \pi(i)$, then $X_n$ is null-recurrent.
\item If $\sum_{i \in S} (2c_i + s_i^2)\pi(i) < 0$, then $X_n$ is positive-recurrent.
\end{itemize}
If, in addition, \eqref{ass:lim-q+} and \eqref{ass:mu-lamperti+} hold, then the following condition also applies (yielding an exhaustive classification):
\begin{itemize}
\item If  $| \sum_{i \in S} 2c_i \pi (i) | = \sum_{i \in S} s_i^2 \pi(i)$, then $X_n$ is null-recurrent.
\end{itemize}
\end{theorem}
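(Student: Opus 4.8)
The plan is to reduce the critical case to an effective one-dimensional Lamperti problem governed by the $\pi$-averaged parameters $\overline{c} := \sum_{i\in S} c_i\pi(i)$ and $\overline{s^2} := \sum_{i\in S} s_i^2\pi(i)$, so that the boundary condition reads $|2\overline{c}| = \overline{s^2}$ (note $\overline{s^2}>0$, since all $\pi(i)>0$ and at least one $s_i^2$ is nonzero by \eqref{ass:mu-lamperti}). As $(X_n)$ is not Markov, I work with the joint chain and construct Lyapunov functions $f(x,i) = g(x) + \chi(x,i)$, where $g$ depends only on $x$ and $\chi$ is a lower-order \emph{corrector} absorbing the line-dependence of the one-step drift. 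The corrector comes from Poisson equations for the irreducible stochastic matrix $(q(i,j))$: for any $h\colon S\to\R$ with $\sum_{i} \pi(i) h(i) = 0$ there is $\phi$ solving $\sum_j q(i,j)\phi(j) - \phi(i) = h(i)$, and by \eqref{ass:lim-q} the increment of $\phi(\eta_n) k(X_n)$ has leading term $k(x)\bigl[\sum_j q(i,j)\phi(j) - \phi(i)\bigr]$. Choosing $h$ to be the line-dependent part of the drift of $g$ — whose $\pi$-average vanishes exactly at criticality — turns the drift into one depending on $x$ alone to the order that matters; the rates in \eqref{ass:lim-q+} and \eqref{ass:mu-lamperti+} guarantee the replacement errors are $O(x^{-\delta})$ smaller.

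The condition $|2\overline{c}|=\overline{s^2}$ splits into the boundary cases $2\overline{c}=\pm\overline{s^2}$, treated in turn. Consider first $2\overline{c}=\overline{s^2}$ (the transient/null boundary), where the substantive task is to exclude transience. I take $g(x)=(\log x)^\gamma$ with $\gamma\in(0,1)$; a second-order expansion gives, before correction, drift
\[
\frac{\gamma(\log x)^{\gamma-1}}{x^2}\cdot\frac{2c_i - s_i^2}{2} + \frac{\gamma(\gamma-1)}{2}\,\frac{(\log x)^{\gamma-2}}{x^2}\, s_i^2 + (\text{errors}).
\]
Two correctors, at orders $(\log x)^{\gamma-1}x^{-2}$ and $(\log x)^{\gamma-2}x^{-2}$ (solvable because $\sum_i(2c_i - s_i^2)\pi(i)=0$ and $\sum_i(s_i^2-\overline{s^2})\pi(i)=0$), replace $2c_i-s_i^2$ by its vanishing average and $s_i^2$ by $\overline{s^2}$, leaving drift $\tfrac{\gamma(\gamma-1)}{2}\overline{s^2}(\log x)^{\gamma-2}x^{-2}+(\text{errors})$, strictly negative and dominating the $O((\log x)^{\gamma-2}x^{-2-\delta})$ errors. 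Thus $f\to\infty$ is a supermartingale off a finite set, giving recurrence of $(X_n,\eta_n)$, hence of $(X_n)$ by Lemma~\ref{Xn-recur-trans}. To exclude positive recurrence at this boundary I use $g(x)=x^\nu$ with $\nu>0$ small; the corrector of the non-critical analysis yields drift $\approx\tfrac{\nu^2}{2}\overline{s^2}\,x^{\nu-2}>0$ (the bracket $\overline{c}+\tfrac{\nu-1}{2}\overline{s^2}$ reduces to $\tfrac{\nu}{2}\overline{s^2}$), so $f$ is an unbounded submartingale with controlled increments and a standard non-positive-recurrence criterion gives null recurrence.

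For the other boundary $2\overline{c}=-\overline{s^2}$ the roles swap. Recurrence is immediate from $g(x)=x^\nu$, $\nu\in(0,1)$ small, whose corrected drift is $\approx\tfrac{\nu(\nu-2)}{2}\overline{s^2}x^{\nu-2}<0$. The substantive task is to exclude positive recurrence, for which the corrected $x^\beta$ with $\beta>2$ gives drift $\approx\tfrac{\beta(\beta-2)}{2}\overline{s^2}x^{\beta-2}>0$; with $f\to\infty$ and \eqref{ass:X-diff-mom-p} this shows the expected return time to a finite set is infinite. Equivalently one may invoke passage-time moment criteria for Lamperti chains, which need only the $(2+\eps)$-moments supplied by \eqref{ass:X-diff-mom-p}.

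The main obstacle is that criticality annihilates the $\pi$-averaged leading drift, so the classification is decided at the next order — logarithmic in $x$ for the recurrence direction, and through the double zero $\overline{c}+\tfrac{\nu-1}{2}\overline{s^2}=\tfrac{\nu}{2}\overline{s^2}$ for the power functions. Keeping these delicate terms in sign requires showing both that the line-dependence averages out via correctors whose very existence hinges on the orthogonality to $\pi$ forced by the boundary condition, and that the convergence-rate errors are genuinely negligible: this is precisely where \eqref{ass:lim-q+} and \eqref{ass:mu-lamperti+} enter, since their $O(x^{-\delta})$ rates decay faster than the inverse-logarithmic gaps separating the surviving terms from the noise. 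Verifying the hypotheses of the semimartingale recurrence and non-positive-recurrence criteria with only the available moments, and controlling large jumps in the Taylor expansions, is the remaining technical work.
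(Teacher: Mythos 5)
Your route is genuinely different from the paper's. The paper never works with Lyapunov functions on the strip directly: it passes to the embedded chain $Y_n = X_{\tau_n}$ of values of $X$ at successive returns of $\eta$ to a fixed line, proves via a coupling of $\eta_n$ with the limit chain $\eta^\star_n$ that the expected occupation time of line $i$ per excursion is $\pi(i)/\pi(0)+O(x^{-\delta})$ (Lemmas~\ref{lem:coupling} and \ref{lem:occupation-limit}), computes the first two increment moments of $Y_n$ by a Doob decomposition over an excursion (Lemma~\ref{lem:Y-moments-markov}), and then quotes the one-dimensional Lamperti classification (Lemma~\ref{lem:Lam}, from \cite{lamp1,lamp3,mai}) before transferring back to $X_n$ via Lemma~\ref{lem:X-Y}. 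Your approach instead stays on $\ZP\times S$ and averages out the line-dependence with a Poisson-equation corrector $\phi(\eta_n)k(X_n)$ for the limit matrix $q$. This is a legitimate and well-known alternative; it buys you freedom from the coupling/excursion machinery (and from the choice of a distinguished line), at the price of having to re-derive, in the Lyapunov-function setting, exactly the boundary-case criteria that the paper imports wholesale from \cite{mai}. Your accounting of which error rates are needed where is correct: the strict-inequality cases survive $o(1)$ errors because the residual drift is a full power of $x$ above the noise, while the boundary cases need the $O(x^{-\delta})$ rates of \eqref{ass:lim-q+} and \eqref{ass:mu-lamperti+} because the residual drift is only a factor $\log x$ above the cancelled terms; and your observation that the second corrector (replacing $s_i^2$ by $\overline{s^2}$) is genuinely needed when some $s_i^2=0$ is a point that is easy to miss.

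The one step that does not go through as written is the exclusion of positive recurrence at the boundary $2\overline{c}=-\overline{s^2}$. You assert that the corrected $x^\beta$, $\beta>2$, being an unbounded submartingale outside a finite set, ``shows the expected return time to a finite set is infinite.'' That inference is false in general: the standard non-ergodicity criterion (e.g.\ Theorem~2.2.7 of \cite{fmm}) requires, in addition, that $\Exp[\,|f(Z_{n+1})-f(Z_n)|\mid Z_n=z]$ be \emph{uniformly bounded}, and for $f(x,i)\approx x^\beta$ with $\beta>2$ the one-step increment has conditional $L^1$-norm of order $x^{\beta-1}$, which is unbounded; without the uniform integrability of increments an unbounded nonnegative submartingale is perfectly compatible with positive recurrence. (Note the contrast with the other boundary $2\overline{c}=+\overline{s^2}$, where your $x^\nu$ with $\nu\in(0,1)$ does have uniformly bounded $L^1$ increments, since $|a^\nu-b^\nu|\le|a-b|^\nu$, so there the criterion applies and your argument is fine.) Your fallback --- passage-time moment criteria of Lamperti/Aspandiiarov--Iasnogorodski--Menshikov type --- is the correct repair, and such criteria are stated for general adapted processes so they can be applied to $f(X_n,\eta_n)$; but verifying their hypotheses (a matched pair of sub/supermartingale conditions for two different powers of the Lyapunov function, together with the second-moment comparison) is precisely the delicate content of this case and cannot be waved through. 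This is the one place where your proof has a genuine hole rather than deferred routine work.
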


In the case where  $S$ is a singleton, Theorem \ref{thm:lamperti_rough_then_sharp}
reduces essentially to results of Lamperti \cite{lamp1,lamp3}, and so our result can be seen as a generalization
of Lamperti's.

Our final main result concerns the weak convergence of $(X_n, \eta_n)$.
The limit statement will involve the distribution function $F_{\alpha, \theta}$ defined
for  parameters $\alpha >0$ and $\theta > 0$ by
\begin{equation}
\label{eqn:Fdef}
 F_{\alpha, \theta} ( x) = \int_0^x \frac{2 u^{2\alpha -1} \re^{-u^2/\theta} }{ \theta^\alpha \Gamma (\alpha) } \ud u , ~~ (x \geq 0).\end{equation}
Note that, if $Z \sim \Gamma (\alpha, \theta)$ is a gamma random variable
with shape parameter $\alpha>0$ and scale parameter $\theta >0$, then $\Pr [ \sqrt{Z} \leq x ] = F_{\alpha, \theta} (x)$.
(In the special case with $\alpha = 1/2$ and $\theta =2$, $F_{\alpha, \theta}$ is 
the distribution of the square-root of a $\chi^2$ random variable with one degree of freedom, i.e.,
the absolute value of a standard normal random variable.)
 
\begin{theorem}
\label{thm:weak_limit}
Suppose that  \eqref{ass:X-diff-mom-p} holds for some $p>4$, and conditions   \eqref{ass:lim-q} and \eqref{ass:mu-lamperti} hold.
Suppose that the matrix $q$ appearing in \eqref{ass:lim-q} is aperiodic.
Suppose also that  
  $\sum_{i \in S} (2c_i + s_i^2)\pi(i) > 0$. Then, for any $k \in S$ and $x \in \RP$,
\[ \lim_{n \to \infty} \Pr \left[ n^{-1/2} X_n \leq x , \, \eta_n = k \right] = \pi (k) F_{\alpha, \theta} (x) ,\]
where
\begin{equation}
\label{eq:alpha_theta}
 \alpha = \frac{1}{2} + \frac{\sum_{i \in S} c_i \pi (i)}{\sum_{i \in S} s_i^2 \pi (i) } , ~~\text{and}~~
\theta = 2 \sum_{i \in S} s_i^2 \pi (i) .\end{equation}
\end{theorem}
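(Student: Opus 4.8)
The plan is to prove the displayed limit by establishing three facts that combine to give the product form: (a) the marginal convergence $n^{-1/2} X_n \tod \sqrt{Z}$ with $Z \sim \Gamma(\alpha, \theta)$, whose distribution function is exactly $F_{\alpha,\theta}$; (b) $\Pr[\eta_n = k] \to \pi(k)$; and (c) asymptotic independence of $\eta_n$ and $n^{-1/2} X_n$. Note first that the hypothesis $\sum_{i} (2c_i + s_i^2)\pi(i) > 0$ is precisely the transient regime of Theorem \ref{thm:lamperti_rough_then_sharp}, so $X_n \to \infty$ a.s.; it also forces $\alpha = \tfrac12 + (\sum_i c_i \pi(i))/(\sum_i s_i^2 \pi(i)) > 0$, so that $Z$ is well defined. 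For (a) I would use the method of moments on $Y_n := X_n^2/n$, showing $\Exp[Y_n^r] \to \Exp[Z^r] = \theta^r \Gamma(\alpha+r)/\Gamma(\alpha) =: m_r$ for every $r \geq 1$. Since the gamma law satisfies Carleman's condition, this yields $Y_n \tod Z$, and the continuous mapping theorem gives $n^{-1/2}X_n = \sqrt{Y_n} \tod \sqrt{Z}$.

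The core of the moment computation is to average out the $\eta$-coordinate. Expanding using \eqref{ass:mu-lamperti} gives, to leading order,
\[ \Exp[X_{n+1}^{2r} - X_n^{2r} \mid \F_n] = X_n^{2r-2}\, \psi_r(\eta_n) + o(X_n^{2r-2}), \qquad \psi_r(i) := 2rc_i + r(2r-1)s_i^2, \]
a drift whose coefficient fluctuates with $\eta_n$. To replace $\psi_r(\eta_n)$ by its stationary mean $\bar\psi_r := \sum_i \pi(i)\psi_r(i) = r\bigl(2\sum_i c_i\pi(i) + (2r-1)\sum_i s_i^2\pi(i)\bigr)$, I would solve the Poisson equation $g_r(i) - \sum_j q(i,j) g_r(j) = \psi_r(i) - \bar\psi_r$ for $g_r \colon S \to \R$ (solvable since $q$ is irreducible and the right-hand side has zero $\pi$-mean) and work with the corrected functionals $F_n^{(r)} := X_n^{2r} + X_n^{2r-2} g_r(\eta_n)$. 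Using \eqref{ass:lim-q} to pass from $q_{X_n}$ to $q$ in the drift of the correction term, the fluctuating parts cancel, leaving $\Exp[F_{n+1}^{(r)} - F_n^{(r)} \mid \F_n] = \bar\psi_r X_n^{2r-2} + o(X_n^{2r-2})$; the cross terms coming from $X_{n+1}^{2r-2} \neq X_n^{2r-2}$ are of lower order $O(X_n^{2r-3})$ because $g_r$ is bounded on the finite set $S$. Since $\Exp[X_n^{2r-2}g_r(\eta_n)] = O(n^{r-1}) = o(n^r)$, taking expectations and summing yields the recursion $m_r = \kappa_r m_{r-1}$ with $\kappa_r = \bar\psi_r / r = 2\sum_i c_i\pi(i) + (2r-1)\sum_i s_i^2\pi(i) = \theta(\alpha + r - 1)$. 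As this is exactly the ratio $m_r/m_{r-1}$ of the gamma moments, an induction anchored by $\Exp[X_n^2] \sim \alpha\theta\, n$ closes (a).

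For (b) and (c) I would exploit a two-timescale separation: $\eta_n$ equilibrates over a slowly growing number of steps while $X_n$ moves only on the diffusive scale. Fix $L = L(n)$ with $L \to \infty$ and $L = o(n)$, and condition on $\F_{n-L}$. On the event that $X$ stays large throughout $[n-L, n]$ — of probability tending to $1$ since $X_n \to \infty$ a.s. — the $\eta$-transitions are uniformly close to those of the aperiodic irreducible chain $q$ by \eqref{ass:lim-q}, so the conditional law of $\eta_n$ converges to $\pi$ regardless of $(X_{n-L}, \eta_{n-L})$; this gives $\Pr[\eta_n = k] \to \pi(k)$. Simultaneously $|X_n^2 - X_{n-L}^2| \le |X_n - X_{n-L}|(X_n + X_{n-L})$ is $o_\Pr(n)$, since the first factor is $O_\Pr(\sqrt L)$ by \eqref{ass:X-diff-mom-p} and the second is $O_\Pr(\sqrt n)$. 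Hence $n^{-1/2}X_n$ has the same limit as $n^{-1/2}X_{n-L}$, which is $\F_{n-L}$-measurable and thus asymptotically independent of the equilibrated $\eta_n$; combined with (a) this produces the product $\pi(k) F_{\alpha,\theta}(x)$.

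The main obstacle is rigorising the moment recursion in (a) under the weak hypothesis \eqref{ass:X-diff-mom-p}: with only $p > 4$ increment moments, $\Exp[X_n^{2r}]$ is generally infinite for $2r > p$, so the method of moments cannot be run on $X_n$ directly. I would therefore first carry out the whole argument for a chain with increments truncated at level $\varepsilon\sqrt{n}$, which has all polynomial moments finite and for which the recursion above is legitimate; the third-and-higher increment moments then contribute corrections of relative order $\varepsilon$ to the recursion, removed by letting $\varepsilon \downarrow 0$ after $n \to \infty$, while the truncation itself is asymptotically irrelevant to the diffusive scaling because $p > 2$ gives a Lindeberg-type bound on the discarded mass. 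The stronger requirement $p > 4$ enters in controlling $\Exp[X_n^4] = O(n^2)$, which anchors the induction and certifies that the $o(X_n^{2r-2})$ and $O(X_n^{2r-3})$ remainders are genuinely lower order after summation. Keeping all these error terms uniform over $i \in S$ and propagating them cleanly through the Poisson-corrected recursion is the most delicate bookkeeping of the proof.
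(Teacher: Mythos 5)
Your route to the marginal limit is genuinely different from the paper's. You run a method of moments on $X_n^2/n$ directly, using a Poisson-equation corrector $g_r$ on $S$ to average the $\eta$-dependent drift coefficient down to its $\pi$-mean, with truncation at level $\eps\sqrt{n}$ to make the higher moments finite under \eqref{ass:X-diff-mom-p}. The paper instead passes to the embedded chain $Y_n = X_{\tau_n}$ of values at successive returns to a fixed line, computes its first two increment moments (Lemma \ref{lem:Y-moments-markov}, where the occupation-time estimate of Lemma \ref{lem:occupation-limit} plays the role of your Poisson corrector), quotes Lamperti's weak convergence theorem for that one-dimensional chain, and then transfers back to $X_n$ by a renewal inversion $N(n)/n \to \pi(0)$ plus control of within-excursion fluctuations. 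The paper's route buys a lot: all the hard analysis (moment recursion, truncation, moment-determinacy) is outsourced to Lamperti's theorem, and only $2+\eps$ moments of the embedded increments are needed for the drift computations; your route would have to reprove the substance of Lamperti's 1962 theorem with the modulation folded in, which is feasible but is where all the delicate bookkeeping you acknowledge actually lives. For the joint statement $\Pr[n^{-1/2}X_n \leq x,\ \eta_n = k] \to \pi(k)F_{\alpha,\theta}(x)$, your two-timescale argument is essentially the paper's: condition on $\F_{n-m}$, use that $X_{n-m}$ is large to couple $\eta$ to the aperiodic chain $q$ and equilibrate to $\pi$, and note that $X_n$ and $X_{n-m}$ differ by $o_\Pr(\sqrt n)$.

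There is one genuine error that propagates through your argument: the hypothesis $\sum_i (2c_i + s_i^2)\pi(i) > 0$ is \emph{not} the transient regime, and $X_n \to \infty$ a.s.\ is false in general. By Theorem \ref{thm:lamperti_rough_then_sharp} this regime also contains null-recurrent cases (whenever $-\sum_i s_i^2\pi(i) < \sum_i 2c_i\pi(i) \leq \sum_i s_i^2 \pi(i)$), in which $X_n = 0$ infinitely often almost surely. You invoke a.s.\ divergence twice: to dismiss the $o(X_n^{2r-2})$ errors coming from \eqref{ass:lim-q} and \eqref{ass:mu-lamperti} in the moment recursion, and to claim that $X$ stays large on $[n-L,n]$ with probability tending to one. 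Both uses can be repaired --- the first by splitting on $\{X_k \leq M\}$ and using the inductive moment bounds, the second by using the already-established marginal limit together with $F_{\alpha,\theta}(0)=0$ and taking $L = o(\sqrt n)$ --- but as written the argument has a hole, and the repair for the second point is needed because your claim $|X_n - X_{n-L}| = O_\Pr(\sqrt L)$ does not follow from \eqref{ass:X-diff-mom-p} alone (without a martingale or drift estimate the crude bound is $O_\Pr(L)$, which is why $L = o(\sqrt n)$ rather than $L = o(n)$ is the right choice). The paper sidesteps all of this by only ever needing $n^{-1}N(n) \to \pi(0)$ in probability (Lemma \ref{lem:N_limit}), which it proves using the null property of $(Y_n)$ rather than almost sure transience.
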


 \begin{remarks} \label{rmk:weak_limit}
(i) 
 Under the hypothesis of Theorem \ref{thm:weak_limit}, Theorem \ref{thm:lamperti_rough_then_sharp} shows that the process
is null-recurrent or transient;  Theorem \ref{thm:weak_limit} demonstrates a form of asymptotic independence
between $X_n$ (rescaled) and $\eta_n$ (which converges to $\pi$). By contrast, in the positive-recurrent aperiodic case, 
$\Pr [   X_n \leq x , \, \eta_n = k  ]$ (with no scaling)  possesses a limit, but that limit
cannot be identified without additional assumptions (and the limit distribution of $\eta_n$ need not even be $\pi$). 

(ii) The case of Theorem \ref{thm:weak_limit}
in which $S$ is a singleton is essentially Lamperti's weak convergence result from \cite{lamp2}.

(iii)
If in addition \eqref{ass:lim-q+} and \eqref{ass:mu-lamperti+} hold, then the boundary case $\sum_{i \in S} (2c_i + s_i^2)\pi(i) = 0$
is null-recurrent, by Theorem \ref{thm:lamperti_rough_then_sharp}. In this case the proof given in Section \ref{sec:weak_convergence} below can be modified to show that
$n^{-1/2} X_n \to 0$ in probability; this is consistent with the fact that the $\alpha \to 0$ limit of $F_{\alpha, \theta}$ corresponds to a point mass at $0$.

(iv) 
With some additional work, the arguments in Section \ref{sec:weak_convergence} should yield the process version of Theorem \ref{thm:weak_limit}:
in the sense of finite dimensional distributions, as $n \to \infty$,
\[ \left( n^{-1/2} X_{nt} , \eta_{nt}  \right)_{t \in [0,1]} \longrightarrow \left( x_t, \omega_t \right)_{t \in [0,1]} ,\]
 where $(2/\theta)^{1/2} x_t$ is a Bessel process with dimension $2\alpha$ and $\omega_t$ is an $S$-valued white noise
process whose finite-dimensional marginals are sequences of i.i.d.~$\pi$-distributed variables.
\end{remarks} 

The remainder of the paper is organized as follows. In Section \ref{sec:examples} we 
give some additional context to the present work by describing how our setting
generalizes the literature on additive functionals of Markov chains, and by presenting
some additional examples, including a variant of the correlated random walk. Section \ref{sec:analysis}
contains the bulk of our analysis, which proceeds via considering an embedded Markov chain. The proofs of the
main theorems are then completed in Section \ref{sec:proofs}.

To simplify the presentation in the rest of the paper, we often write $\Pr_{x, i} [ \, \cdot \, ]$ for $\Pr [ \, \cdot \, \mid X_0 =x, \eta_0 =i]$,
corresponding to the law of the Markov chain with initial state $(x,i) \in \ZP \times S$; similarly for (expectation) $\Exp_{x,i}$.

We finish this section with some general remarks. 
Our method of proof is different from other approaches in the literature. Falin \cite{falin1,falin2}, while also making use of Foster--Lyapunov
results, bases his computations on a delicate algebraic calculation. Rogers \cite{rogers} uses an embedded Markov chain, as we do, but his analysis relies
on the additive functional representation (see Section \ref{sec:additive_functionals}). 
Our approach to the excursion estimates for the embedded process, via the Doob decomposition, makes the emergence of the `pseudo-drift'
quantities particularly intuitive from a probabilistic perspective: see the discussion around \eqref{eqn:excursion_drift} below.

The case where $S$ is infinite can give rise to completely different phenomena from the finite setting, and we do not consider this here. Under suitable assumptions, however,
such as uniform versions of our asymptotic conditions \eqref{ass:lim-q}, \eqref{ass:mu1-const} or \eqref{ass:mu-lamperti},
and sufficient moments for $\tau$ and the increments of $X_n$, the results of the present paper should extend to the infinite setting.

\section{Examples and remarks on the literature}
\label{sec:examples}

\subsection{Homogeneity and additive functionals}
\label{sec:additive_functionals}

As mentioned in Remark \ref{rem:homogeneity}, condition \eqref{ass:homogeneous_case}
is assumed in much of the literature. A special structure emerges
when 
\eqref{ass:homogeneous_case} is imposed for \emph{all} $x$.
Indeed, one then has
that $(\eta_n)$ itself is a Markov chain, since
\begin{align}
\label{eqn:homogeneous_q}
 \Pr [ \eta_{n+1} = j  \mid (X_n,\eta_n) = ( x , i ) ] = \sum_{y \in \Z} r ( y -x , i ,j )  = \sum_{z \in \Z} r ( z , i ,j ) = q (i,j) .\end{align}
A similar argument shows that $(X_{n} - X_{n-1} , \eta_{n} )$ is a Markov chain on $\Z \times S$, with
\begin{align*}
 \Pr [ (X_{n+1}-X_{n}, \eta_{n+1} ) = (z,  j)  \mid (X_{n} - X_{n-1} , \eta_{n} ) = ( y , i ) ] 
= r (z, i ,j).\end{align*}
Then if $\psi : \Z \times S \to \Z$ is given by $\psi (z,i) = z$, we may write
\[ X_n = X_0 + \sum_{k=0}^{n-1} \psi ( X_{k+1} - X_k , \eta_{k+1} ),\]
which represents $X_n$ as an \emph{additive functional of a Markov chain}. 

However, for $x \in \ZP$, assuming that \eqref{ass:homogeneous_case} holds for all $x \geq 0$
is very restrictive, and implies that $X_{n+1} - X_n \geq 0$ a.s.~(see Remark \ref{rem:homogeneity}).
So in the homogeneous setting, it makes sense to  instead take the state space to be $\Z \times S$
so that \eqref{eqn:transition_probs}
now holds with $x$ and $y$ in $\Z$. Assuming that \eqref{ass:homogeneous_case} holds for all $x \in \Z$
now yields the additive functional structure above, without imposing additional restrictions on the magnitude of
$X_{n+1} - X_n$.

In either case, we may note 
 that 
\begin{equation}
\label{eqn:homo_drifts}
 \Exp [ X_{n+1} - X_n \mid (X_n , \eta_n ) = (x,i) ] = \sum_{z \in \Z} \sum_{j \in S} z r (z, i ,j) =: \mu_1 (i) ,\end{equation}
say,
assuming that the mean increments are well defined; so there is a constant mean drift $\mu_1 (i)$ for each $i \in S$.

Moreover, if $\pi$ is the stationary distribution on $S$ associated with the Markov chain $(\eta_n)$
given by \eqref{eqn:homogeneous_q}, then a calculation shows that the Markov chain 
$(X_{n} -X_{n-1} , \eta_n)$ has stationary distribution $\varpi(z,i)$ on $\Z \times S$ given by
\[
 \varpi (z, i)  =  \sum_{k \in S} \pi (k) r ( z , k,i) .\]

In this context, a result of Rogers \cite{rogers}
on additive functionals of Markov chains shows that recurrence classification
of $(X_n)$ 
depends on the sign of
\begin{align} 
\label{eqn:excursion_drift}
\sum_{i \in S} \sum_{z \in \Z} \varpi (z,i) \psi (z, i) 
= \sum_{i \in S} \sum_{z \in \Z} z \sum_{k \in S} \pi (k) r ( z , k,i) 
 = \sum_{k \in S} \pi(k) \mu_1 (k) .\end{align}
There are many similar results in the literature for additive functionals of Markov chains in more general spaces, 
and related results in ergodic theory concerning `co-cycles' (see, e.g.,~\cite{atkinson}).
However, the methods adapted to this additive functional structure seem to depend
 crucially on the homogeneity assumption \eqref{ass:homogeneous_case}.

The interpretation of the quantity of \eqref{eqn:excursion_drift}
is as a `pseudo-drift' accumulated over i.i.d.\ excursions of the Markov chain:
see Rogers \cite{rogers}. We take this idea further, as the analogues of these excursions in our setting are not i.i.d.,
due to the additional non-homogeneity. However, our methods exploit the essential structure that remains.

\subsection{Correlated random walk}
\label{sec:correlated_walk}

In the one-dimensional {\em correlated random walk}, a particle performs a random walk on $\Z$ with a short-term memory: the distribution of $X_{n+1}$
depends not only on the current position $X_n$, but also on the `direction of travel' $X_n - X_{n-1}$. Formally,
$(X_n , X_n - X_{n-1} )$ is a Markov chain on $\Z \times \{ -1 , +1 \}$.
Supposing also that \eqref{ass:homogeneous_case} holds for all $x \in \Z$, this is a special case of the framework
discussed in Section \ref{sec:additive_functionals}, with $\eta_n = X_n - X_{n-1}$.

One standard version of the model
supposes that
the nonzero transition probabilities are given by
$p (x, i, x+j , j) = r(j, i, j) = q(i,j)$, where
\[ q(i,j) = \begin{cases} 
\frac{1}{2} + \rho_i & \text{if } j =i \\
\frac{1}{2} - \rho_i & \text{if } j \neq i
\end{cases}
\]
is  the transition matrix of the Markov chain  $(\eta_n)$, and $\rho_i \in (-\frac{1}{2}, \frac{1}{2})$ are fixed
parameters. For this random walk, the additive
structure described
in Section \ref{sec:additive_functionals} is particularly simply expressed via $X_n = X_0 + \sum_{k=0}^{n-1} \eta_{k+1}$. 

Corresponding to $q$ is the stationary distribution 
$\pi(i) = \frac{(1/2) - \rho_{-i}}{1-\rho_i-\rho_{-i}}$,
and the mean drifts given by \eqref{eqn:homo_drifts} are now
$\mu_1 (i) = \sum_{j \in S} j q (i, j) = 2 i \rho_i$.
Then we see that the
`pseudo-drift' \eqref{eqn:excursion_drift} is zero 
 if and only if $\rho_i = 
\rho$ is the same for each $i$; the 
random walk is recurrent
in exactly this case.

A positive $\rho_i$ corresponds to \emph{persistence}
of the walker in direction $i$ (the walker has an `inertia');
a negative $\rho_i$ corresponds to a walker who \emph{vacillates} in direction $i$,
and has an increased propensity to turn around.
 
Such models have a long history, and have been studied under different names by many different researchers:
as `persistent random walks' by F\"urth \cite{furth}, `correlated random walks' by Gillis  \cite{gillis},
`random walks with restricted reversals' by  Domb and Fisher \cite{df},
and, recently, `Newtonian random walks' by Lenci \cite{lenci}.
 Under appropriate rescaling, the  model leads to the {\em telegrapher's equation} in the scaling limit, as discussed by Goldstein \cite{goldstein} and Kac 
\cite{kac}. 
There has been a large amount of recent work on correlated random walk and related models;
a small selection is \cite{am1,st,cr2,hs}. Motivation for studying these models arises from several
sources, including physical Brownain motion \cite{furth} and models for molecular configurations \cite{daniels}. We refer
to \cite{hughes} for some additional background and references.
 
As an application of our main results, consider the following variation on
 the one-dimensional correlated random walk, intended to probe more
precisely the recurrence-transience phase transition. This time
we take the state-space to be $\ZP \times \{ -1, +1\}$ to fit into the setting
of Section \ref{sec:model}. We
suppose that 
the nonzero transition probabilities are
$p (x, i, x+j , j) = q_x(i,j)$, where
\[ q_x (i,j) = \begin{cases} 
\frac{1}{2} + \frac{i c }{2x} + O (x^{-1-\delta} )& \text{if } j =i \\
\frac{1}{2} - \frac{i c }{2x} + O (x^{-1-\delta} )& \text{if } j \neq i
\end{cases}
\]
for some constants  $\delta >0$ 
 and $c  \in \R$. For $c>0$, the walk is persistent in the positive direction but vacillating
in the negative direction; conversely for $c <0$. So for nonzero $c$, the symmetry between the two directions
present in the (recurrent) $c=0$ case
is broken: how does this affect the recurrence?

Under these assumptions, 
\eqref{ass:lim-q+} holds with
$q (i, j) = \frac{1}{2}$ for all $i, j$, so that $\pi(i) = \frac{1}{2}$
for $i = \pm 1$. Also,
\[ \mu_1 (x, i) = \sum_{j \in S} j q_x (i, j) = \frac{c }{x}  + O (x^{-1-\delta} ) , ~~\text{and}~~ \mu_2 (x, i) = \sum_{j \in S} j^2 q_x (i,j) = 1, \text{ for $x \geq 1$}
,\]
so that \eqref{ass:mu-lamperti+} holds (with $s_i^2 = 1$ for $i =\pm1$). Applying Theorems \ref{thm:lamperti_rough_then_sharp}
and \ref{thm:weak_limit}
yields the following result.

\begin{corollary}\label{cor:corr-rw}
If $c < -\frac{1}{2}$, then the walk is positive-recurrent. If $c > \frac{1}{2}$, then the walk is transient.
If $| c | \leq \frac{1}{2}$, then the walk is null-recurrent.
Moreover, if $c > - \frac{1}{2}$, then 
\[ \lim_{n \to \infty} \Pr [ n^{-1/2} X_n \leq x , \, \eta_n = i ] = \frac{1}{2} F_{c+(1/2) , 2} (x).\] 
\end{corollary}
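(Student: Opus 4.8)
The plan is to verify the hypotheses of Theorems \ref{thm:lamperti_rough_then_sharp} and \ref{thm:weak_limit} and then read off the conclusions, since the asymptotic increment data have already been assembled in the paragraph preceding the statement. The increments satisfy $|X_{n+1}-X_n| = 1$ a.s., so \eqref{ass:X-diff-mom-p} holds for every $p$; in particular the moment requirements $p>2$ and $p>4$ of the two theorems are met. The discussion above the statement records that \eqref{ass:lim-q+} holds with $q(i,j) = 1/2$ and $\pi(\pm 1) = 1/2$, and that \eqref{ass:mu-lamperti+} holds with $c_i = c$ and $s_i^2 = 1$ for $i = \pm 1$; these subsume the weaker conditions \eqref{ass:lim-q} and \eqref{ass:mu-lamperti}.

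First I would compute the pseudo-drift quantities that drive Theorem \ref{thm:lamperti_rough_then_sharp}. With $\pi(\pm 1) = 1/2$, $c_i \equiv c$ and $s_i^2 \equiv 1$, one finds
\[ \sum_{i \in S} (2c_i - s_i^2)\pi(i) = 2c - 1, \quad \Bigl| \sum_{i \in S} 2 c_i \pi(i) \Bigr| = 2|c|, \quad \sum_{i \in S} s_i^2 \pi(i) = 1, \quad \sum_{i \in S} (2c_i + s_i^2)\pi(i) = 2c + 1. \]
The transient, positive-recurrent and strict null-recurrent clauses of Theorem \ref{thm:lamperti_rough_then_sharp} then give transience for $2c - 1 > 0$, positive recurrence for $2c + 1 < 0$, and null recurrence for $2|c| < 1$, i.e.\ for $c > 1/2$, $c < -1/2$ and $|c| < 1/2$ respectively. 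Because the stronger hypotheses \eqref{ass:lim-q+} and \eqref{ass:mu-lamperti+} hold, the boundary clause of the theorem also applies and yields null recurrence when $2|c| = 1$, completing the classification over $|c| \le 1/2$.

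For the weak limit I would invoke Theorem \ref{thm:weak_limit}. The only hypothesis not already in hand is aperiodicity of $q$: but $q$ has every entry equal to $1/2$, so $q^2 = q$ and each state carries a self-loop of probability $1/2 > 0$, whence $q$ is aperiodic. The positivity requirement $\sum_{i \in S} (2c_i + s_i^2)\pi(i) = 2c + 1 > 0$ is exactly the condition $c > -1/2$. The parameters in \eqref{eq:alpha_theta} then evaluate to
\[ \alpha = \frac{1}{2} + \frac{\sum_{i \in S} c_i \pi(i)}{\sum_{i \in S} s_i^2 \pi(i)} = \frac{1}{2} + c, \quad \theta = 2\sum_{i \in S} s_i^2 \pi(i) = 2, \]
and since $\pi(i) = 1/2$ for each $i$, Theorem \ref{thm:weak_limit} delivers the stated limit $\tfrac{1}{2} F_{c + 1/2,\, 2}(x)$.

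This is a direct application, so I do not anticipate a genuine obstacle. The one place to be careful is the sign bookkeeping in the increment moments: the factor $i$ multiplying $c/(2x)$ in $q_x$ conspires with the $j=i$ versus $j \ne i$ distinction so that on both lines the drift points outward with the same coefficient $c$, making $c_i$ symmetric across $i = \pm 1$ and hence the $\pi$-averaged quantities clean. Confirming this symmetry — already carried out above the statement — is the only computational content.
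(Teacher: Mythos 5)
Your proposal is correct and follows exactly the route the paper takes: the text preceding the corollary establishes \eqref{ass:lim-q+} with $q(i,j)=\tfrac12$, $\pi(\pm1)=\tfrac12$, and \eqref{ass:mu-lamperti+} with $c_i\equiv c$, $s_i^2\equiv 1$, and the corollary is then a direct read-off from Theorems \ref{thm:lamperti_rough_then_sharp} and \ref{thm:weak_limit}, with the same evaluations $2c\pm 1$, $2|c|$ vs.\ $1$, $\alpha=c+\tfrac12$, $\theta=2$ that you computed. Nothing is missing.
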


\subsection{Modulated queue}
\label{sec:modulate-queue}

To finish this section we return to the queueing model as presented in the introduction. 
 Recall that the critical case from the point of view of recurrence and transience is when $\rho = \lambda$, and we are interested in the behaviour of the model under perturbations of the constants $c_i$ for $i \in S$.  For this model we have $q_x(i,j) = \frac{1}{2}(a_{ij} + b_{ij}) + O(x^{-1})$ so provided that the matrix $M_{ij} = \frac{1}{2}(a_{ij}+b_{ij})$ is irreducible, 
condition \eqref{ass:lim-q+} holds.  We see that
\[
\mu_1(x,i) = \frac{c_i}{x} + O(x^{-2}), \quad \text{and} \quad \mu_2(x,i) = 1,
\]
so that \eqref{ass:mu-lamperti+} holds.  Let $\pi$ be the stationary distribution associated with 
transition matrix~$M$, and set $\bar{c} = \sum_{i \in S} c_i \pi(i)$. Applying Theorems \ref{thm:lamperti_rough_then_sharp}
and \ref{thm:weak_limit}
yields the following result (cf.\ Corollary~\ref{cor:corr-rw}).

\begin{corollary}
 If $\bar{c} < -\frac{1}{2}$, then the Markov chain is positive-recurrent. If $\bar{c} > \frac{1}{2}$, then the Markov chain is transient.
If $| \bar{c} | \leq \frac{1}{2}$, then the Markov chain is null-recurrent.
Moreover, if $\bar{c} > - \frac{1}{2}$, then 
\[ \lim_{n \to \infty} \Pr [ n^{-1/2} X_n \leq x , \, \eta_n = i ] = \frac{1}{2} F_{\bar{c}+(1/2) , 2} (x).\] 
\end{corollary}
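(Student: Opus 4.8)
The plan is to verify that the modulated queue of the Introduction satisfies the hypotheses of Theorems~\ref{thm:lamperti_rough_then_sharp} and~\ref{thm:weak_limit}, and then to read off the classification and the weak limit by substituting the resulting increment-moment and stationary parameters. Since each step of $(X_n)$ moves by exactly $\pm 1$, we have $|X_{n+1}-X_n| \le 1$ almost surely, so \eqref{ass:X-diff-mom-p} holds for every $p$, in particular for $p>4$; this disposes at once of the moment hypothesis in both theorems. The recurrence type of $X_n$ delivered by the theorems then transfers to the full chain $(X_n,\eta_n)$ through Lemmas~\ref{Xn-recur-trans} and~\ref{Xn-positive-recur}.

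First I would compute the one-step quantities directly from the explicit transition law. Summing the arrival and departure probabilities over the terminal internal state $j$ gives total up- and down-probabilities $\tfrac{1}{2(1-c_i/x)}$ and $\tfrac{1-2c_i/x}{2(1-c_i/x)}$, from which
\[
\mu_1(x,i) = \frac{c_i}{x-c_i} = \frac{c_i}{x} + O(x^{-2}), \qquad \mu_2(x,i) = 1 ,
\]
so that \eqref{ass:mu-lamperti}, and indeed \eqref{ass:mu-lamperti+} with $\delta_1 = 1$, hold with $s_i^2 = 1$ for all $i$. Likewise the transitions out of $(x,i)$ land in line $j$ with total weight $q_x(i,j) = \tfrac{1}{2(1-c_i/x)} a_{ij} + \tfrac{1-2c_i/x}{2(1-c_i/x)} b_{ij}$, and a Taylor expansion gives $q_x(i,j) = M_{ij} + \tfrac{c_i}{2x}(a_{ij}-b_{ij}) + O(x^{-2})$, so \eqref{ass:lim-q}, and \eqref{ass:lim-q+} with $\delta_0=1$, hold with limit matrix $q = M$ as soon as $M$ is irreducible.

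With these parameters the classification is immediate. Writing $\bar c = \sum_{i} c_i \pi(i)$ and using $s_i^2 \equiv 1$ together with $\sum_i \pi(i) = 1$, the threshold quantities of Theorem~\ref{thm:lamperti_rough_then_sharp} become $\sum_i (2c_i - s_i^2)\pi(i) = 2\bar c - 1$ and $\sum_i (2c_i + s_i^2)\pi(i) = 2\bar c + 1$, while $|\sum_i 2c_i\pi(i)| = 2|\bar c|$ is compared with $\sum_i s_i^2\pi(i) = 1$. Hence $X_n$ is transient when $\bar c > \tfrac12$, positive-recurrent when $\bar c < -\tfrac12$, and null-recurrent when $|\bar c| < \tfrac12$; the boundary $|\bar c| = \tfrac12$ is null-recurrent by the sharp part of Theorem~\ref{thm:lamperti_rough_then_sharp}, available precisely because \eqref{ass:lim-q+} and \eqref{ass:mu-lamperti+} were checked above, so that $|\bar c| \le \tfrac12$ is null-recurrent. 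For the weak limit, when $\bar c > -\tfrac12$ (equivalently $\sum_i (2c_i+s_i^2)\pi(i) > 0$) Theorem~\ref{thm:weak_limit} applies and \eqref{eq:alpha_theta} yields $\alpha = \tfrac12 + \bar c$ and $\theta = 2$, giving the limit $\pi(i)\, F_{\bar c + (1/2),\, 2}(x)$ (with the stationary weight $\pi(i)$ furnished by Theorem~\ref{thm:weak_limit}).

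The computations above are routine, so I anticipate no genuine analytic obstacle; the care required is entirely in the bookkeeping. The error terms must be tracked to order $O(x^{-1-\delta_1})$ in $\mu_1$ and $O(x^{-\delta_0})$ in $q_x$ in order to legitimately invoke the \eqref{ass:lim-q+}/\eqref{ass:mu-lamperti+} strengthenings that underpin the boundary case and the weak limit, which the explicit $O(x^{-2})$ estimates comfortably supply. The one hypothesis that is not automatic is the aperiodicity of $q = M$ demanded by Theorem~\ref{thm:weak_limit}; I would impose this explicitly (in addition to the irreducibility of $M$ already assumed) for the final displayed convergence.
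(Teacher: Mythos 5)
Your proposal is correct and follows essentially the same route as the paper: compute $\mu_1(x,i)=c_i/x+O(x^{-2})$, $\mu_2(x,i)=1$ and $q_x(i,j)=M_{ij}+O(x^{-1})$ from the explicit transition law, verify \eqref{ass:lim-q+} and \eqref{ass:mu-lamperti+}, and substitute into Theorems~\ref{thm:lamperti_rough_then_sharp} and~\ref{thm:weak_limit}. Your observation that aperiodicity of $M$ must be imposed for the weak-limit statement is a fair catch --- the paper leaves that hypothesis implicit in this corollary --- but it does not alter the argument.
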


\section{Analysis via an embedded Markov chain}
\label{sec:analysis}

\subsection{Overview}
\label{sec:overview}

To analyse $(X_n,\eta_n)$ we look at an embedded process $(Y_n)$, which records the $X$-coordinate of the chain when it returns to a
given line. 
Formally, we label an arbitrary state $0 \in S$. Then
set $\tau_0 = \min\{n \in \ZP : \eta_n = 0\}$, and for $m \geq 0$ set $\tau_{m+1} = \min\{ n > \tau_m : \eta_n = 0\}$, where 
we adopt the usual convention that $\min \emptyset = \infty$.  To ease exposition, we introduce a `coffin' state $\partial$ and define the embedded process $Y_n$ on $\ZP \cup \{ \partial \}$ by 
\[
Y_n = \begin{cases} X_{\tau_n} & \text{if $\tau_n < \infty$,}\\ \partial & \text{if $\tau_n = \infty$.} \end{cases}
\]
We also introduce $\tau = \min\{ n > 0 : \eta_n = 0 \}$ 
(so $\tau = \tau_0\1{\{\eta_0 \neq 0\}} + \tau_1 \1{\{\eta_0 = 0\}}$).

For any $n \in \ZP$, given $\tau_n < \infty$ and $X_{\tau_n} = x$, the strong Markov property for the time-homogeneous Markov chain $(X_n,\eta_n)$ shows that $(X_{\tau_n+m}, \eta_{\tau_n+m})_{m\geq 0}$ is independent of $(X_0,\eta_0),\dotsc,(X_{\tau_n},\eta_{\tau_n})$ and is distributed as a copy of $(X_m,\eta_m)_{m \geq 0}$ given $(X_0,\eta_0) = (x,0)$.   In particular, on $\tau_{n+1} < \infty$, the pair $(X_{\tau_{n+1}},\tau_{n+1} - \tau_n)$ depends on $(X_0,\eta_0),\dotsc,(X_{\tau_n},\eta_{\tau_n})$ only through $X_{\tau_n}$.  Hence $Y_n$ is a Markov chain and, given $Y_n = x$, the random variable $\tau_{n+1} - \tau_n$ has the same distribution as $\tau$ conditional on $(X_0,\eta_0)= (x,0)$.

We refer to $(X_m, \eta_m)_{\tau_n \leq m \leq \tau_{n+1}}$ as the $n$th \emph{excursion} from the line $0$.
The basis for our analysis of the embedded Markov chain $(Y_n)$ will be an analysis
of a single excursion, depending on the starting position. A key component of this analysis is a coupling result,
which we present in the next subsection.

\subsection{Coupling construction}

\begin{lemma}\label{lem:coupling}
Suppose that condition \eqref{ass:X-diff-mom-p} holds for some $p>1$ and condition \eqref{ass:lim-q} holds.  Then there exists a Markov chain $(X_n,\eta_n,\eta^\star_n)$ on $\ZP \times S \times S$ such that
\begin{itemize}
\item $(X_n,\eta_n)$  
is a Markov chain on $\ZP \times S$ with transition probabilities $p(x,i,y,j)$;
\item $(\eta^\star_n)$ is a Markov chain on $S$ with transition probabilities $q(i,j)$; and
\item for all $n \in \ZP$ and all $i \in S$,
\begin{equation}\label{eqn:eta-eta-star-coupling}
\lim_{x\to \infty} \Pr{ \bigg[ \bigcap_{0 \leq k \leq n} \{ \eta_k = \eta^\star_k \} \bigmid X_0 = x, \eta_0 = \eta^\star_0 = i \bigg]} = 1.
\end{equation}
\end{itemize}
Finally, suppose in addition that \eqref{ass:lim-q+} holds. Then there exists $\delta>0$ such that, for any $A < \infty$, for all $i \in S$,
as $x \to \infty$,
\begin{equation}
\label{eqn:eta-eta-star-coupling2}
1 -  \Pr{ \bigg[ \bigcap_{0 \leq k \leq A \log x} \{ \eta_k = \eta^\star_k \} \bigmid X_0 = x, \eta_0 = \eta^\star_0 = i \bigg]} = O (x^{-\delta} ) .
\end{equation}
\end{lemma}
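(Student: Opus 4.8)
The plan is to build the joint chain by a step-by-step \emph{maximal coupling} of the $\eta$-transitions. Suppose the coupled chain is at $(X_n,\eta_n,\eta^\star_n)=(x,i,i')$. If $i \neq i'$ we simply generate $(X_{n+1},\eta_{n+1})$ from $p(x,i,\cdot,\cdot)$ and $\eta^\star_{n+1}$ from $q(i',\cdot)$ independently. If $i=i'$, we first generate the pair $(\eta_{n+1},\eta^\star_{n+1})$ from a maximal coupling of the laws $q_x(i,\cdot)$ and $q(i,\cdot)$ on the finite set $S$, and then, given $\eta_{n+1}=j$, draw $X_{n+1}=y$ with the conditional probability $p(x,i,y,j)/q_x(i,j)$ (chosen arbitrarily on the null set where $q_x(i,j)=0$). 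By construction the $(X,\eta)$-marginal transition is always $p(x,i,\cdot,\cdot)$ and the $\eta^\star$-marginal transition is always $q(i',\cdot)$, so the first two bullet points hold automatically; the only role of the maximal coupling is to make the two $\eta$-coordinates agree as often as possible. Consequently, whenever $\eta_n=\eta^\star_n$, the conditional probability of disagreement at the next step is exactly the total variation distance $\Pr[\eta_{n+1}\neq \eta^\star_{n+1}\mid \F_n] = \frac12 \sum_{j\in S} |q_{X_n}(\eta_n,j)-q(\eta_n,j)|$.

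Next I would control the first disagreement time $T:=\min\{k\geq 1: \eta_k\neq \eta^\star_k\}$, noting that the event in \eqref{eqn:eta-eta-star-coupling} is exactly $\{T>n\}$. Set $\epsilon(x):=\sup_{y\geq x}\max_{i\in S}\frac12\sum_{j\in S}|q_y(i,j)-q(i,j)|$, which tends to $0$ as $x\to\infty$ by \eqref{ass:lim-q}. On the good event $G_n:=\{X_k\geq x/2\text{ for all }0\leq k\leq n-1\}$ each per-step disagreement probability is at most $\epsilon(x/2)$, so summing the first-disagreement probabilities over the $n$ steps gives $\Pr_{x,i}[T\leq n,\,G_n]\leq n\,\epsilon(x/2)$. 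For the complementary event, condition \eqref{ass:X-diff-mom-p} with $p>1$ and conditional Jensen give $\Exp[|X_{k+1}-X_k|\mid\F_k]\leq C_p^{1/p}$, hence $\Exp_{x,i}[\max_{0\leq k\leq n-1}|X_k-x|]\leq (n-1)C_p^{1/p}$, and Markov's inequality yields $\Pr_{x,i}[G_n^{\mathrm c}]\leq 2(n-1)C_p^{1/p}/x$. Adding the two bounds shows $\Pr_{x,i}[T\leq n]\to 0$, which is \eqref{eqn:eta-eta-star-coupling}.

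For \eqref{eqn:eta-eta-star-coupling2} I would run the identical argument over the growing window $N:=\lceil A\log x\rceil$ in place of $n$, now exploiting the quantitative rate in \eqref{ass:lim-q+}: there $\epsilon(y)=O(y^{-\delta_0})$, so $\epsilon(x/2)=O(x^{-\delta_0})$ and the coupled-failure term is $N\,\epsilon(x/2)=O(x^{-\delta_0}\log x)$, while the displacement term becomes $\Pr_{x,i}[G_N^{\mathrm c}]\leq 2N C_p^{1/p}/x=O(x^{-1}\log x)$. Both error terms are $O(x^{-\delta})$ for any fixed $\delta<\min\{1,\delta_0\}$, absorbing the logarithm, which gives \eqref{eqn:eta-eta-star-coupling2}.

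The main obstacle is not the coupling construction, which is routine once one recognises that the per-step failure probability is the total variation distance between $q_{X_n}(\eta_n,\cdot)$ and $q(\eta_n,\cdot)$; rather it is that this failure probability depends on the \emph{random} position $X_k$, which is only guaranteed small when $X_k$ is large. The delicate point is therefore to keep $X_k$ above a fixed fraction of its starting value throughout a window whose length grows (logarithmically) with $x$; this is precisely where the moment bound \eqref{ass:X-diff-mom-p} enters, and the logarithmic window length is exactly short enough that the first-moment Markov bound $O(x^{-1}\log x)$ still beats any fixed power $x^{-\delta}$ with $\delta<1$.
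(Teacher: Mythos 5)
Your proposal is correct and follows essentially the same route as the paper: a step-wise maximal coupling of $q_{X_n}(\eta_n,\cdot)$ with $q(\eta_n,\cdot)$, identification of the per-step decoupling probability with the total variation distance, and a decomposition over the event that $X_k$ stays above a threshold for the duration of the (fixed or logarithmic) window. The only difference is cosmetic: you control $\Pr[\min_k X_k < x/2]$ via a first-moment bound on $\sum_k |X_{k+1}-X_k|$ and Markov's inequality, whereas the paper union-bounds over the events $\{|X_{k+1}-X_k| > (x-x_0)/n\}$ using the $(1+\eps)$-moment; both give a power-of-$x$ decay that absorbs the $\log x$ factors.
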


The statements of Lemma~\ref{lem:coupling} will follow from a
coupling argument.  Essentially, equation~\eqref{eqn:eta-eta-star-coupling} is proved using a maximal coupling of $\eta_n$ and $\eta^\star_n$; the condition~\eqref{ass:lim-q} that $q_x(i,j)$ has a limit as $x \to \infty$ means that we can control the probability of decoupling, \emph{provided} that $X_n$ stays sufficiently large, and it is this dependence on $X_n$ that introduces a (minor) complication to an otherwise standard argument.  Equation~\eqref{eqn:eta-eta-star-coupling2} is proved in a similar manner using the stronger condition~\eqref{ass:lim-q+} on $q_x(i,j)$; the full details of the proof can be found in Appendix~\ref{sec:technical_appendix}.

In the remainder of this subsection we explore some consequences of the coupling described in Lemma \ref{lem:coupling}.
First we introduce additional notation in the context of the joint probability space on which the coupled
process $(X_n, \eta_n, \eta^\star_n)$ is constructed.
 We denote by $\tau^\star$ the first return time to 0 of the Markov chain $(\eta^\star_n)$, namely
\[
\tau^\star := \min \{ n \geq 1 : \eta^\star_n = 0 \}.
\]
Moreover, we write $\Pr_{x,i,j}$ for the
probability measure conditional on $X_0 =x, \eta_0 = i, \eta^\star_0 = j$,
and $\Exp_{x,i,j}$ for the corresponding expectation.

Irreducibility of the time-homogeneous Markov chain $(X_n,\eta_n)$ and finiteness of $S$ imply that for any $x$, there exist $m(x) < \infty$ and $\varphi(x)>0$ such that
\begin{equation}\label{eqn-irred-at-x}
\Pr_{x,i} { [\tau \leq m(x) ]} \geq \varphi(x) \text{ for all $i$}.
\end{equation}
In the specific case that $q_x(i,j)$ is constant in $x$, the process $(\eta_n)$ is distributed exactly as the finite
irreducible Markov chain $(\eta^\star_n)$, so 
the functions $m(x)$ and $\varphi(x)$ in \eqref{eqn-irred-at-x} can be chosen to be
 \emph{uniform} over $x$.
Our first consequence of the above coupling 
is that \eqref{eqn-irred-at-x}
can be strengthened to such a uniform version  under our weaker conditions: roughly speaking, assumption \eqref{ass:lim-q} implies that $(\eta_n)$ is sufficiently close to  $(\eta^\star_n)$ 
 when the $X$-coordinate of $(X_n,\eta_n)$ is sufficiently large, and irreducibility does the rest.

\begin{lemma}
\label{lem:q-implies-unif-irred}
Suppose that condition \eqref{ass:X-diff-mom-p} holds for some $p>1$ and condition \eqref{ass:lim-q} holds.  Then there exist $m < \infty$ and $\varphi > 0$ such that,
 for all $i$  and all $x$,
\begin{equation}\label{eqn-unif-irred}
\Pr_{x,i} {[\tau \leq m ]} \geq \varphi.
\end{equation}
\end{lemma}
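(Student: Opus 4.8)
The plan is to split the range of starting positions $x$ into a ``large $x$'' regime, handled through the coupling of Lemma~\ref{lem:coupling}, and a ``small $x$'' regime consisting of finitely many values, handled directly by the pointwise estimate~\eqref{eqn-irred-at-x}. The key point is that the comparison chain $(\eta^\star_n)$ is a \emph{finite irreducible} Markov chain, whose first-return time $\tau^\star$ already obeys a bound that is uniform in the starting state, and the coupling lets us transfer this bound to $(\eta_n)$ once $X_0$ is large. To begin I would record the comparison estimate: since $(q(i,j))$ is irreducible on the finite set $S$ by~\eqref{ass:lim-q}, from every $i \in S$ there is a positive-probability path returning to $0$ of bounded length, so there exist a finite $m^\star$ (one may take $m^\star = \card S$) and $\varphi^\star > 0$ with $\Pr_{x,i,i}[\tau^\star \leq m^\star] = \Pr_i[\tau^\star \leq m^\star] \geq \varphi^\star$ for all $i$; note this probability is governed solely by $q$ and does not depend on $x$.

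Next I would invoke~\eqref{eqn:eta-eta-star-coupling} with the fixed horizon $n = m^\star$: for each $i$ the probability that $\eta_k = \eta^\star_k$ for all $0 \leq k \leq m^\star$ tends to $1$ as $x \to \infty$. Because $S$ is finite this holds uniformly over $i$, so there is an $x_1 < \infty$ with $\Pr_{x,i,i}\bigl[\bigcap_{0 \leq k \leq m^\star}\{\eta_k = \eta^\star_k\}\bigr] \geq 1 - \tfrac{1}{2}\varphi^\star$ for all $x \geq x_1$ and all $i$. On the intersection of this coupling event with $\{\tau^\star \leq m^\star\}$, any visit of $\eta^\star$ to $0$ at a time in $\{1,\dots,m^\star\}$ is also a visit of $\eta$ to $0$, whence $\tau \leq m^\star$. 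A union bound then gives, for all $x \geq x_1$ and all $i$,
\[
\Pr_{x,i}[\tau \leq m^\star] = \Pr_{x,i,i}[\tau \leq m^\star] \geq \Pr_{x,i,i}[\tau^\star \leq m^\star] - \tfrac{1}{2}\varphi^\star \geq \tfrac{1}{2}\varphi^\star .
\]

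For the finitely many $x \in \{0,1,\dots,x_1-1\}$ I would simply apply~\eqref{eqn-irred-at-x}, which supplies $m(x) < \infty$ and $\varphi(x) > 0$ for each such $x$. Setting $m := \max\{m^\star, \max_{x < x_1} m(x)\}$ and $\varphi := \min\{\tfrac{1}{2}\varphi^\star, \min_{x < x_1}\varphi(x)\}$ (both finite and strictly positive, being extrema over finitely many terms), the monotonicity $\Pr_{x,i}[\tau \leq m] \geq \Pr_{x,i}[\tau \leq m(x)]$ for $x < x_1$ and the large-$x$ bound above together yield~\eqref{eqn-unif-irred} for every $x$ and every $i$.

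The only step carrying genuine content is the large-$x$ transfer, and the two things to be careful about there are that the convergence in~\eqref{eqn:eta-eta-star-coupling} is made uniform over the finite set $S$ (so a single threshold $x_1$ serves all $i$) and that the coupling is applied on the window $[0,m^\star]$ matching the horizon of the comparison estimate. The small-$x$ argument and the bookkeeping of the final constants are entirely routine, precisely because only finitely many values of $x$ are in play.
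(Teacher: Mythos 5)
Your proposal is correct and follows essentially the same route as the paper: compare $\tau$ with $\tau^\star$ via the coupling of Lemma~\ref{lem:coupling} over the fixed horizon supplied by irreducibility of the finite chain $(\eta^\star_n)$, absorb the decoupling probability for large $x$, and handle the remaining finitely many $x$ with the pointwise estimate~\eqref{eqn-irred-at-x}. The only differences are cosmetic (your $\varphi^\star/2$ versus the paper's $2\varphi$ minus $\varphi$ bookkeeping).
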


In the proof of this result, and at several points later on, we consider the event
\begin{equation}
\label{eq:E_n_def}
 E_n := \cap_{0 \leq \ell \leq n} \{ \eta_\ell = \eta^\star_\ell  \} .\end{equation}

\begin{proof}[Proof of Lemma \ref{lem:q-implies-unif-irred}.]
We work with the Markov chain $(X_n, \eta_n, \eta^\star_n)$ given in Lemma \ref{lem:coupling}.
Since $\eta^\star$ is a finite
 irreducible Markov chain, there exist $m < \infty$ and $\varphi > 0$ such that
$\Pr_{x,i,i}[ \tau^\star \leq m ] \geq 2\varphi$
for all $i$ and all $x$.  Conditional on $\eta_n$ and $\eta^\star_n$ remaining coupled up to time $m$, we have $\tau \leq m$ if and only if $\tau^\star \leq m$; hence
\[
\Pr_{x,i,i}[ \tau \leq m ] \geq \Pr_{x,i,i}[E_m \cap \{ \tau^\star \leq m\}] \geq \Pr_{x,i,i}[\tau^\star \leq m] - \Pr_{x,i,i}[E_m^\rc].
\]
But by Lemma \ref{lem:coupling}, there exists $x_0$ such that $\Pr_{x,i,i}[E_m^\rc] \leq \varphi$ for all $x \geq x_0$ and hence  
\eqref{eqn-unif-irred} holds for all $i$ and all $x \geq x_0$.
 
  But also, since  $x_0 < \infty$, for any $m(x)$ and $\varphi(x)$
  satisfying \eqref{eqn-irred-at-x}, we define $m_0 := \max_{x \leq x_0} m(x) < \infty$
  and $\varphi_0 := \min_{x \leq x_0} \varphi (x) >0$ so that, for any $x \leq x_0$,
\[
 \Pr_{x, i} {[  \tau \leq m_0  ]} \geq \Pr_{x,i} {[\tau \leq m(x) ]} \geq \varphi(x) \geq \varphi_0, \text{ for all }  i.
\]
So, redefining $m$ and $\varphi$ as necessary,  \eqref{eqn-unif-irred} in fact holds for all $i$ and \emph{all} $x$.
\end{proof}

\subsection{Excursion durations and occupation estimates}

Next we give an exponential tail bound for the duration of excursions, uniform in the initial location.
 
\begin{lemma}\label{lem:unif-irred}
Suppose that condition \eqref{ass:X-diff-mom-p} holds for some $p>1$ and condition \eqref{ass:lim-q} holds.
Then there exist constants $c >0$ and $C< \infty$ such that,  for all $x$, $n$, and $r$,
\[
\Pr{[\tau_{n+1} - \tau_n > r \mid X_{\tau_n} = x ]} \leq C \re^{-c r}.
\]
\end{lemma}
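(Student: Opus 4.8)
The plan is to reduce the claim to a uniform tail bound on the first return time $\tau$ to line $0$, and then extract geometric decay from the uniform irreducibility of Lemma~\ref{lem:q-implies-unif-irred}. By the strong Markov property discussed in Section~\ref{sec:overview}, conditional on $X_{\tau_n} = x$ (so that $\eta_{\tau_n} = 0$), the increment $\tau_{n+1} - \tau_n$ is distributed as $\tau$ under $\Pr_{x,0}$. Hence it suffices to produce $c > 0$ and $C < \infty$, independent of $x$, with $\Pr_{x,0}[\tau > r] \leq C \re^{-cr}$ for all $r$; the resulting bound is then automatically uniform in $n$.

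To obtain geometric decay I would fix the constants $m < \infty$ and $\varphi > 0$ supplied by Lemma~\ref{lem:q-implies-unif-irred}, so that $\Pr_{x,i}[\tau \leq m] \geq \varphi$ for every $i \in S$ and every $x$, and prove by induction on $k$ that $\Pr_{x,0}[\tau > km] \leq (1-\varphi)^k$. For the inductive step, I would condition on $\F_{(k-1)m}$: on the event $\{\tau > (k-1)m\}$ one has $\eta_{(k-1)m} \neq 0$, so writing $(X_{(k-1)m}, \eta_{(k-1)m}) = (x', i')$ with $i' \neq 0$, time-homogeneity and the Markov property identify the conditional probability of $\{\tau > km\}$ with $\Pr_{x', i'}[\tau > m] \leq 1 - \varphi$. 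Taking expectations gives $\Pr_{x,0}[\tau > km] \leq (1-\varphi)\,\Pr_{x,0}[\tau > (k-1)m]$, and the induction closes with the base case $\Pr_{x,0}[\tau > m] \leq 1 - \varphi$.

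Passing to arbitrary real $r$ is routine: with $k = \lfloor r/m \rfloor$ one has $\{\tau > r\} \subseteq \{\tau > km\}$, so $\Pr_{x,0}[\tau > r] \leq (1-\varphi)^{\lfloor r/m \rfloor} \leq (1-\varphi)^{-1} \re^{(r/m)\log(1-\varphi)}$. Setting $C = (1-\varphi)^{-1}$ and $c = -m^{-1}\log(1-\varphi) > 0$ yields the stated estimate.

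The crucial point — and the only place any real work is concealed — is that the \emph{same} $m$ and $\varphi$ serve at every block, regardless of how far the $X$-coordinate has drifted by time $(k-1)m$. This is exactly the content of Lemma~\ref{lem:q-implies-unif-irred}: its uniformity over all $x$ is what prevents the per-block success probability from degenerating as the walk wanders off, and it is what upgrades a path-dependent hitting estimate into clean geometric decay. Once that uniform input is secured, the remainder is the standard geometric-trials (regeneration) scheme and presents no further obstacle.
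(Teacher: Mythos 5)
Your argument is correct and is essentially the paper's own proof: both reduce to a uniform tail bound for $\tau$, invoke the uniform irreducibility of Lemma~\ref{lem:q-implies-unif-irred} to get a per-block success probability $\varphi$ over windows of length $m$, and iterate via the Markov property to obtain geometric decay (the paper writes the iteration as a telescoping product of conditional probabilities rather than an explicit induction, which is the same computation). The only cosmetic difference is that the paper establishes the bound for $\Pr_{x,i}[\tau>r]$ uniformly over all initial lines $i$, not just $i=0$, but your argument extends verbatim.
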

\begin{proof}
Recall that since $\tau_{n+1} - \tau_n$ conditional on $Y_n = x$ has the same distribution as $\tau$ conditional on $X_0 = x, \eta_0 = 0$, it suffices to show that, for some constants $C,c > 0$,
\begin{equation}\label{eqn:tau-exp-tails}
\Pr_{x,i} {[ \tau > r ]} \leq C \re^{-cr}, ~\text{for all $x$ and $i$}.
\end{equation}
 (We then get the claimed result for $\tau_{n+1} - \tau_n$ by setting $i=0$.) 
Recall that, by Lemma~\ref{lem:q-implies-unif-irred},
$\Pr_{x,i} {[  \tau \leq m ]} \geq \varphi$.
 Moreover, using the time-homogeneity of $(X_n,\eta_n)$,   for all $x$ and $i$,
\[
\begin{split}
\Pr_{x,i} {[ \tau \leq km+m \mid \tau> km ]} & \geq \min_{y,j} \Pr{[\tau \leq km+m \mid \tau > km, X_{km} = y, \eta_{km} = j]}\\
&= \min_{y,j} \Pr{[\tau \leq m \mid X_0 = y, \eta_0 = j]} \geq \varphi,
\end{split}
\]
for all positive integers $k$.  But this implies that, for all positive integers $k$,
\[
\Pr_{x,i} {[ \tau > km  ]} = \prod_{j=1}^k \Pr_{x,i} {[\tau > jm \mid \tau > (j-1)m ]} \leq (1-\varphi)^k.
\]
Finally, for general $r \in \ZP$, there exists an integer $k$ such that $km \leq r < (k+1)m$, so
\begin{align*}
\Pr_{x,i} {[ \tau > r  ]} & \leq \Pr_{x,i} {[ \tau > km  ]}   \leq (1-\varphi)^k < (1-\varphi)^{r/m-1} \leq C\re^{-cr},
\end{align*}
for constants $C,c > 0$ dependent only on $\varphi$ and $m$, giving~\eqref{eqn:tau-exp-tails}.
\end{proof}

 The next result   shows that the mean  occupation time of $(X_n,\eta_n)$ on line $i$ per excursion
 can be approximated by the mean  occupation time of $(\eta^\star_n)$ in state $i$ per excursion.

\begin{lemma}
\label{lem:occupation-limit}
Suppose that condition \eqref{ass:X-diff-mom-p} holds for some $p>1$ and condition \eqref{ass:lim-q} holds. Then, 
for  any $i \in S$,
\[ \lim_{x \to \infty} \Exp_{x,0}   \sum_{k=0}^{\tau -1} \1 \{ \eta_k = i \}  = \frac{\pi(i)}{\pi(0)} .\]
If, in addition, \eqref{ass:lim-q+} holds, then there exists $\delta>0$ such that, for  any $i \in S$, as $x \to \infty$,
\[ \bigg| { \Exp_{x,0}   \sum_{k=0}^{\tau -1} \1 \{ \eta_k = i \}  - \frac{\pi(i)}{\pi(0)} } \bigg| = O ( x^{-\delta}).\]
\end{lemma}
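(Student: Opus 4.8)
The plan is to prove both statements of Lemma \ref{lem:occupation-limit} by comparing the occupation time of $\eta_n$ over an excursion with the corresponding quantity for the coupled reference chain $\eta^\star_n$, using Lemma \ref{lem:coupling} to control decoupling and Lemma \ref{lem:unif-irred} to control the excursion length. The natural target quantity for the reference chain is $\Exp_{x,0} \sum_{k=0}^{\tau^\star - 1} \1 \{ \eta^\star_k = i \}$, which by standard Markov chain theory equals $\pi(i)/\pi(0)$ exactly: indeed, for a finite irreducible chain, the expected number of visits to state $i$ during an excursion from state $0$ (starting at $0$, before returning to $0$) is precisely $\pi(i)/\pi(0)$. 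This reference value does not depend on $x$, so the whole task reduces to showing that the difference between the $\eta_n$-occupation and the $\eta^\star_n$-occupation vanishes as $x \to \infty$ (with an $O(x^{-\delta})$ rate under \eqref{ass:lim-q+}).

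First I would write the difference of occupation times and split on the decoupling event $E_n$ from \eqref{eq:E_n_def}. The key observation is that on the event $E_{\tau \vee \tau^\star}$ (where the two chains agree up to the larger of the two return times), we have $\tau = \tau^\star$ and $\eta_k = \eta^\star_k$ for all $0 \leq k < \tau$, so the two occupation sums coincide. Hence the difference is supported on the decoupling event, and I can bound
\[
\bigg| \Exp_{x,0} \sum_{k=0}^{\tau-1} \1\{\eta_k = i\} - \Exp_{x,0} \sum_{k=0}^{\tau^\star-1} \1\{\eta^\star_k = i\} \bigg| \leq \Exp_{x,0} \Big[ (\tau + \tau^\star) \1\{ \text{decoupling before } \tau \vee \tau^\star \} \Big],
\]
since each summand is bounded by $1$ and the number of terms is at most $\tau + \tau^\star$. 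The right-hand side is then controlled by Cauchy--Schwarz: $\Exp[(\tau+\tau^\star)\1_A] \leq (\Exp[(\tau+\tau^\star)^2])^{1/2} (\Pr[A])^{1/2}$, where $A$ is the decoupling event. The second moment of $\tau$ is finite and bounded uniformly in $x$ by the exponential tail bound in Lemma \ref{lem:unif-irred} (and the same holds for $\tau^\star$, a return time for a finite irreducible chain), so that factor is $O(1)$.

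The main obstacle, and the place needing the most care, is estimating the decoupling probability $\Pr_{x,0}[A]$ and showing it vanishes (at rate $O(x^{-\delta})$). The difficulty is that Lemma \ref{lem:coupling} only bounds the decoupling probability over a \emph{fixed} horizon ($n$ fixed, or $A\log x$ steps), whereas here the horizon is the random time $\tau \vee \tau^\star$, which can be arbitrarily large. The way to handle this is to split according to whether $\tau \vee \tau^\star$ exceeds a threshold of order $\log x$. On $\{\tau \vee \tau^\star \leq A\log x\}$, decoupling before this time is controlled directly by \eqref{eqn:eta-eta-star-coupling} (for the limit statement) or \eqref{eqn:eta-eta-star-coupling2} (for the $O(x^{-\delta})$ rate, choosing $A$ large enough that the tail of $\tau \vee \tau^\star$ is itself $O(x^{-\delta})$). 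On the complementary event $\{\tau \vee \tau^\star > A\log x\}$, the exponential tail bound of Lemma \ref{lem:unif-irred} gives a probability of order $(1-\varphi)^{A\log x / m} = O(x^{-cA})$, which is $O(x^{-\delta})$ once $A$ is taken large. Combining these two contributions via Cauchy--Schwarz with the uniformly bounded second moment yields the claimed $O(x^{-\delta})$ bound under \eqref{ass:lim-q+}, and the simpler qualitative version under \eqref{ass:lim-q} alone follows by dominated convergence (or the same splitting with $A$ fixed and $x \to \infty$). The only subtlety to verify carefully is that the exponent $\delta$ produced by the $\log x$-horizon coupling \eqref{eqn:eta-eta-star-coupling2} and the exponent from the excursion-length tail can be reconciled by choosing the constant $A$ appropriately.
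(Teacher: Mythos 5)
Your proposal is correct and follows essentially the same route as the paper's proof: couple with $\eta^\star$, use that the expected occupation of state $i$ per $\eta^\star$-excursion is exactly $\pi(i)/\pi(0)$, observe that the two occupation sums agree unless decoupling occurs before $\tau\vee\tau^\star$, and control the decoupling event by splitting at a horizon of order $\log x$ (or a fixed $n$ for the qualitative statement) using the coupling lemma on the bounded part and the uniform exponential tail of $\tau$ on the rest. The only cosmetic difference is that the paper bounds the contribution of $\{ \tau\vee\tau^\star\leq n\}\cap E_n^{\rc}$ directly by $n\,\Pr[E_n^{\rc}]$ rather than folding everything into a single Cauchy--Schwarz step; both work.
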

\begin{proof}
Again we work with the Markov chain $(X_n, \eta_n, \eta^\star_n)$ whose existence is
given in the statement of Lemma~\ref{lem:coupling}. 
Fix $i \in S$. 
For the duration of this proof, we write
\[ W:= \sum_{k=0}^{\tau -1} \1 \{ \eta_k = i \}, \text{ and } W^\star := \sum_{k=0}^{\tau^\star -1} \1 \{ \eta_k^\star = i \} .\]
Since $(\eta^\star_n)$ is a Markov chain on $S$
with transition probabilities $q(i,j)$, standard Markov chain theory
yields
$\Exp_{x,0,0} [ W^\star  ] =  \pi(i)/\pi(0)$, 
for any $x \in \ZP$. The statements of the lemma will follow
from suitable estimates for $\Exp_{x,0,0} [ | W - W^\star | ]$. 

Again define $E_n$ by \eqref{eq:E_n_def}.
Then, for any positive integer $n$,
\begin{align*}
\Exp_{x,0,0} \left[ | W- W^\star |  \right] & \leq \Exp_{x,0,0} \left[ | W-W^\star | \1 ( E_n ) \1 \{ \tau \vee \tau^\star \leq n \} \right]\\
& {} \quad {} + \Exp_{x,0,0} \left[ | W- W^\star| \1 ( E_n^\rc ) \1 \{  \tau \vee \tau^\star \leq n \} \right] \\
& {} \quad {} + \Exp_{x,0,0} \left[ | W- W^\star| \1 \{  \tau \vee \tau^\star > n \} \right] \\
& \leq 0 + n \Pr_{x,0,0} \left[ E_n^\rc   \right] + \Exp_{x,0,0} \left[ (\tau \vee \tau^\star) \1 \{  \tau \vee \tau^\star > n \} \right]
.\end{align*}
Moreover,
\begin{align}
\label{e:tautau}
 \Exp_{x,0,0} \left[ (\tau \vee \tau^\star) \1 \{  \tau \vee \tau^\star > n \} \right]
& \leq \Exp_{x,0,0} \left[ \tau  \1 \{  \tau > n \} \right] + \Exp_{x,0,0} \left[ \tau^\star  \1 \{  \tau^\star > n \} \right] . \end{align}
Here, by Cauchy--Schwarz and the tail estimates in Lemma~\ref{lem:unif-irred},
\begin{equation}
\label{tau_upper}
 \Exp_{x,0,0} [ \tau  \1 \{  \tau > n \} ] \leq ( \Exp_{x,0,0} [ \tau^2 ] )^{1/2} ( \Pr_{x,0,0} [ \tau > n ] )^{1/2}
\leq C \re^{-c n} ,\end{equation}
for some constants $C < \infty$ and $c>0$, not depending on $x$, and similarly for the term involving $\tau^\star$.
 For the first statement in the lemma,
it suffices to show that
\begin{equation}
\label{eqn:occupation-convergence}
 \lim_{x \to \infty} \Exp_{x,0,0} \left[ \left| W - W^\star \right|   \right]  = 0.\end{equation}
Under assumption \eqref{ass:lim-q}, it follows from \eqref{tau_upper} and its analogue for $\tau^\star$
that for any $\eps >0$ we may choose $n \geq n_0$ sufficiently large so that the right-hand side of \eqref{e:tautau}
 is less than~$\eps$, and then $\Exp_{x,0,0} [ | W- W^\star |  ] \leq n  \Pr_{x,0,0} [ E_n^\rc   ] + \eps$. 
For fixed $n$, $\Pr_{x,0,0} [ E_n^\rc   ] \to 0$ as $x \to \infty$ by \eqref{eqn:eta-eta-star-coupling},
so that $\limsup_{x \to \infty}  \Exp_{x,0,0} [ | W- W^\star |  ] \leq \eps$.  Since $\eps >0$ was arbitrary, 
 \eqref{eqn:occupation-convergence} follows.

For the second statement in the lemma, under assumption  \eqref{ass:lim-q+}, we use a similar argument
but with $n = n(x) = \lfloor A \log x \rfloor$. As before,
\[ \Exp_{x,0,0} [ | W- W^\star |  ] \leq n(x) \Pr_{x,0,0} [ E_{n(x)}^\rc   ] + \Exp_{x,0,0} [ (\tau \vee \tau^\star) \1 \{  \tau \vee \tau^\star > n(x) \} ] .\]
For a sufficiently large choice of constant $A$, the
exponential
bound \eqref{tau_upper} shows that the right-hand side of \eqref{e:tautau} decays as a power of $x$, for $n = n(x)$.
Finally, the term $n(x) \Pr_{x,0,0} [ E^\rc_{n(x)} ]$ also decays as a power of $x$, by \eqref{eqn:eta-eta-star-coupling2},
and so we see that $\Exp_{x,0,0} [ | W- W^\star |  ]$ decays as a power of $x$, as required.
\end{proof}

\subsection{Recurrence and transience relationships}
\label{sec:recurrence-transience}

In this subsection we demonstrate the equivalence of recurrence properties of the embedded process $(Y_n)$ to those of
the process $(X_n)$.
 
 From this point of the paper onwards, we will be increasingly concerned with multiple excursions, and it is useful
to introduce the 
notation $\sigma_0 := 0$ and, for $n \in \ZP$,
\[ \sigma_{n+1} := \tau_{n+1} - \tau_n \]
for the durations of the excursions. 
Recall the definition of $Y_n$ from Section \ref{sec:overview}.
Under our conditions (cf.~Lemma \ref{lem:unif-irred}),
$\sigma_n < \infty$ a.s.~for each $n$. Hence $Y_n \neq \partial \as$, and
we can identify $Y_n$ with $X_{\tau_n}$ for all $n$.
For the remainder of the paper
we employ this slight abuse of notation, and assume that the state space of $(Y_n)$ is $\ZP$.
The next result relates recurrence of $(X_n)$ to recurrence of $(Y_n)$.

\begin{lemma}
\label{lem:X-Y}
Suppose that condition \eqref{ass:X-diff-mom-p} holds for some $p>1$ and condition \eqref{ass:lim-q} holds.      
Then the process $(Y_n)$ is an irreducible Markov chain (on $\ZP$). Moreover
\begin{enumerate}[label=(\roman{*})]
\item $(X_n)$ is recurrent if and only if $(Y_n)$ is recurrent.
\item $(X_n)$ is positive-recurrent if and only if $(Y_n)$ is positive-recurrent.
\end{enumerate}
\end{lemma}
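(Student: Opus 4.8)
The plan is to prove the three assertions by translating each property of the full chain $(X_n,\eta_n)$ into the corresponding property of $(Y_n)$ through the excursion decomposition, using the uniform excursion-length control from Lemma~\ref{lem:unif-irred}. Throughout I write $R := \min\{m \geq 1 : Y_m = 0\}$ for the first return time of $(Y_n)$ to $0$, and I record that, by the tail bound of Lemma~\ref{lem:unif-irred}, $\Exp_{x,0}[\tau] = \sum_{r \geq 0} \Pr_{x,0}[\tau > r] \leq \sum_{r \geq 0} C \re^{-cr} =: \bar C < \infty$ uniformly in $x$; consequently, conditioning on $\F_{\tau_{m-1}}$ (on which the excursion duration $\sigma_m$ is distributed as $\tau$ started from $(Y_{m-1},0)$), we have $1 \leq \Exp[\sigma_m \mid \F_{\tau_{m-1}}] \leq \bar C$, the lower bound since each $\sigma_m \geq 1$. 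That $(Y_n)$ is a Markov chain taking values in $\ZP$ was already established in Section~\ref{sec:overview} and the remarks preceding the lemma, so it remains only to address irreducibility and the two equivalences.

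For irreducibility, I would invoke irreducibility of $(X_n,\eta_n)$: for any $x,y \in \ZP$ the event that $(X_n,\eta_n)$ reaches $(y,0)$ from $(x,0)$ has positive probability, and on that event the first hitting time of $(y,0)$ is a time at which $\eta = 0$, hence equals $\tau_m$ for some $m \geq 1$ with $Y_m = y$; thus $\Pr_{x,0}[Y_m = y \text{ for some } m \geq 1] > 0$. For the recurrence dichotomy (part~(i)), the key observation is that $(X_n,\eta_n)$ occupies $(0,0)$ exactly at those return times $\tau_m$ with $X_{\tau_m}=0$, i.e.\ exactly when $Y_m = 0$; since all excursions are a.s.\ finite there are infinitely many $\tau_m$, so the number of visits of $(X_n,\eta_n)$ to $(0,0)$ equals the number of visits of $(Y_n)$ to $0$. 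Both chains being irreducible, each is recurrent iff its distinguished state is visited infinitely often a.s., and since recurrence of $(X_n)$ is by definition recurrence of $(X_n,\eta_n)$ (Lemma~\ref{Xn-recur-trans}), part~(i) follows.

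For positive recurrence (part~(ii)), I would start both chains from $(0,0)$ and let $T := \min\{n \geq 1 : (X_n,\eta_n) = (0,0)\}$, so that $T = \tau_R = \sum_{m=1}^{R} \sigma_m$. Positive recurrence of $(X_n)$, equivalently of $(X_n,\eta_n)$ (Lemma~\ref{Xn-positive-recur}), is equivalent to $\Exp_{0,0}[T] < \infty$, while positive recurrence of $(Y_n)$ is equivalent to $\Exp_{0,0}[R] < \infty$. Since $\sigma_m \geq 1$, we have $T \geq R$ and hence $\Exp_{0,0}[T] \geq \Exp_{0,0}[R]$. For the matching upper bound I would write $T = \sum_{m \geq 1} \sigma_m \1\{R \geq m\}$, note that $\{R \geq m\}$ is $\F_{\tau_{m-1}}$-measurable, and condition on $\F_{\tau_{m-1}}$ to get $\Exp_{0,0}[\sigma_m \1\{R \geq m\}] \leq \bar C \, \Pr_{0,0}[R \geq m]$, whence $\Exp_{0,0}[T] \leq \bar C \sum_{m \geq 1} \Pr_{0,0}[R \geq m] = \bar C \, \Exp_{0,0}[R]$. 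The sandwich $\Exp_{0,0}[R] \leq \Exp_{0,0}[T] \leq \bar C\, \Exp_{0,0}[R]$ shows the two expectations are finite together, giving part~(ii).

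The substantive step is part~(ii): the obstacle is to control $T = \sum_{m=1}^{R}\sigma_m$, a random sum with a random number of terms correlated with the summands. What makes the Wald-type estimate go through is precisely the uniformity in the starting location of the exponential tail bound in Lemma~\ref{lem:unif-irred}, which caps every conditional mean excursion length by the same constant $\bar C$; without this uniformity the upper bound on $\Exp_{0,0}[T]$ could not be obtained so cleanly.
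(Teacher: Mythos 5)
Your proposal is correct and follows essentially the same route as the paper: irreducibility via projecting a positive-probability path of $(X_n,\eta_n)$ onto line $0$, part (i) by identifying visits of $(Y_n)$ to $0$ with visits of $(X_n,\eta_n)$ to $(0,0)$, and part (ii) via the sandwich $R \leq T = \sum_{m=1}^{R}\sigma_m \leq$ (in expectation) $\bar C\,\Exp_{0,0}[R]$, using the $\F_{\tau_{m-1}}$-measurability of $\{R \geq m\}$ and the uniform bound on conditional mean excursion lengths from Lemma~\ref{lem:unif-irred}. This is exactly the paper's Wald-type argument with $\xi$ and $\zeta$ in place of your $R$ and $T$.
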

\begin{proof}
As explained in Section \ref{sec:overview}, the fact that $(Y_n)$ is a Markov chain follows from the strong Markov property for $(X_n,\eta_n)$.

Irreducibility of $(Y_n)$ follows from the irreducibility of $(X_n,\eta_n)$, as follows.  For any $x,y \in \ZP$, there exists a finite path in the state space $\ZP \times S$ from $(x,0)$ to $(y,0)$ that the chain $(X_n,\eta_n)$ has a positive probability of following.  But then the (finite) subpath consisting of the points that are on line 0 corresponds to a path in the state space $\ZP$ that $(Y_n)$ has a positive probability of following.

Now, for statement (i), the fact that $Y_n = 0$ exactly when $X_{\tau_n} = 0$ implies  
 $\{Y_n = 0 \io\}$ if and only if $\{(X_n,\eta_n) = (0,0) \io\}$, so $(Y_n)$ is recurrent if and only if $(X_n,\eta_n)$ is recurrent.  Using Lemma~\ref{Xn-recur-trans}, we have $(Y_n)$ is recurrent if and only if $(X_n)$ is recurrent.

Finally, we verify (ii). Let 
\[ \xi = \min \{ n \geq 1 : Y_n = 0 \}, \text{ and } \zeta = \min \{ n \geq 1 : (X_n, \eta_n) = (0,0) \} .\]
Then $(Y_n)$ is positive-recurrent if and only if $\Exp_{x,0} \xi < \infty$ for some (hence all) $x$, while $(X_n, \eta_n)$
is positive-recurrent if and only if $\Exp_{x,0} \zeta < \infty$. However, $\xi$ and $\zeta$ are related since, given $\eta_0 = 0$, it is the case that
$\tau_0 = 0$ and
$\zeta = \tau_\xi$, i.e.,
\begin{equation}
\label{e:xi-zeta}
 \zeta = \sum_{k=0}^{\xi -1} \sigma_{k+1}  = \sum_{k=0}^\infty \sigma_{k+1} \1 \{ k < \xi \} .\end{equation}
In particular, \eqref{e:xi-zeta} shows that $\zeta \geq \xi$, a.s., so $\Exp_{x,0} \zeta < \infty$ implies that $\Exp_{x,0} \xi < \infty$.
For the implication in the other direction, take expectations in the final expression in \eqref{e:xi-zeta} and
use linearity of expectations and Fubini's Theorem to get
\begin{align*}
\Exp_{x,0} \zeta & = \Exp_{x,0} \sum_{k=0}^\infty \Exp \left[ \sigma_{k+1} \1 \{ k < \xi \} \mid \F_{\tau_k} \right] \\
& = \Exp_{x,0} \sum_{k=0}^\infty \1 \{ k < \xi \}  \Exp \left[  \sigma_{k+1}   \mid \F_{\tau_k} \right] ,
\end{align*}
since $\{ k < \xi \} \in \F_{\tau_k}$. But, by Lemma \ref{lem:unif-irred}, 
$\Exp \left[  \sigma_{k+1} \mid \F_{\tau_k} \right]$
is uniformly bounded by a constant, $C$, say, so that
\[ \Exp_{x,0} \zeta \leq C \Exp_{x,0} \sum_{k=0}^\infty \1 \{ k < \xi \} = C \Exp_{x,0} \xi .\]
Hence $\Exp_{x,0} \zeta < \infty$ if and only if $\Exp_{x,0} \xi < \infty$.
Finally, (ii) follows from Lemma \ref{Xn-positive-recur}, 
which gives the equivalence of positive-recurrence for $(X_n, \eta_n)$ and $(X_n)$.
\end{proof}

\subsection{Increment moment estimates}
\label{sec:moments}  

So far, we have studied the excursions of $(X_n, \eta_n)$ away from the line $\eta_n =0$ in terms
of the $\eta$-coordinate. The next stage is to study the behaviour, over an excursion,
of the $X$-coordinate. In particular, we estimate the moments of $Y_{n+1}-Y_n$,
with a view to later applying a Lamperti condition to determine the recurrence/transience of $(Y_n)$.  
First, we need estimates on 
 the maximum deviation  of $X_n$ during a single excursion:
\begin{equation}
\label{eqn:D-def}
 D_{n} := \max_{\tau_n \leq m \leq \tau_{n+1}} | X_m - X_{\tau_n} | ;
\end{equation} 
note that the distribution of $D_{n}$ given $X_{\tau_n} = x$ depends only on $x$ and not on $n$.

\begin{lemma}\label{lem:D-moments}
Suppose that condition \eqref{ass:X-diff-mom-p} holds for some $p>1$ and condition \eqref{ass:lim-q} holds. 
Then, for any $q \in (0,p)$, 
\[ \sup_x \Pr{[ D_{n} \geq d \mid X_{\tau_n} = x]} = O( d^{-q}), 
~~\text{and}~~  \sup_x \Exp{[D_{n}^q \mid X_{\tau_n}=x]} < \infty  . \]
\end{lemma}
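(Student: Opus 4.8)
The plan is to reduce to a tail estimate for the maximal displacement over a single excursion started from $(x,0)$, and then to control that tail by combining the exponential bound on the excursion length from Lemma~\ref{lem:unif-irred} with the increment moment assumption \eqref{ass:X-diff-mom-p}. By the strong Markov property (as in Section~\ref{sec:overview}), the conditional law of $D_n$ given $X_{\tau_n}=x$ equals that of $M:=\max_{0\le m\le \tau}|X_m-X_0|$ under $\Pr_{x,0}$, so it suffices to show that, for each fixed $q\in(0,p)$, $\sup_x \Pr_{x,0}[M\ge d]=O(d^{-q})$ uniformly in $x$; the moment statement will then follow by integrating this tail bound.

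Fix $q\in(0,p)$ and choose a sacrificial exponent $q_1\in(\max\{1,q\},p)$, which is possible since $p>1$. For a truncation level $r$ to be chosen, I split
\[ \Pr_{x,0}[M\ge d] \le \Pr_{x,0}[\tau> r] + \Pr_{x,0}[M\ge d,\ \tau\le r]. \]
The first term is at most $C\re^{-cr}$ by Lemma~\ref{lem:unif-irred}, uniformly in $x$. For the second term, on $\{\tau\le r\}$ one has the deterministic bound $M\le S_r:=\sum_{k=0}^{r-1}|X_{k+1}-X_k|$, so it is at most $\Pr_{x,0}[S_r\ge d]\le d^{-q_1}\Exp_{x,0}[S_r^{q_1}]$ by Markov's inequality. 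Since $q_1\le p$, assumption \eqref{ass:X-diff-mom-p} together with the tower property and Lyapunov's inequality gives $\Exp[|X_{k+1}-X_k|^{q_1}]\le C_p^{q_1/p}$ uniformly, and because $q_1\ge 1$, Minkowski's inequality yields $\Exp_{x,0}[S_r^{q_1}]\le (C_p^{1/p} r)^{q_1}$, a bound uniform in $x$.

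It then remains to balance the two estimates, and this is the only genuinely delicate step. Taking $r=r(d)=\lceil (q_1/c)\log d\rceil$ makes the exponential term $O(d^{-q_1})$, while the second term becomes $O\big(d^{-q_1}(\log d)^{q_1}\big)$. The apparent obstacle is the stray polylogarithmic factor, which prevents a direct $O(d^{-q_1})$ conclusion; but this is exactly what the gap $q_1>q$ absorbs, since $d^{-(q_1-q)}(\log d)^{q_1}\to 0$ as $d\to\infty$. Hence both contributions are $O(d^{-q})$, giving the claimed tail bound uniformly in $x$.

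Finally, the moment bound follows from the tail bound by integration. Given $q\in(0,p)$, pick $q_2\in(q,p)$, so that the first part gives $\sup_x\Pr_{x,0}[M\ge d]=O(d^{-q_2})$, and then
\[ \sup_x \Exp_{x,0}[M^{q}] = \sup_x \int_0^\infty q\, d^{q-1}\,\Pr_{x,0}[M\ge d]\,\ud d \le q\int_0^1 d^{q-1}\,\ud d + Cq\int_1^\infty d^{q-1-q_2}\,\ud d <\infty, \]
where the second integral converges precisely because $q_2>q$. Translating back through the strong Markov reduction gives $\sup_x\Exp[D_n^q\mid X_{\tau_n}=x]<\infty$, as required; everything outside the balancing step is a routine combination of the excursion tail bound and the moment assumption.
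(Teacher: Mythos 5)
Your proof is correct, and it follows the same overall strategy as the paper's: reduce to a single excursion started from $(x,0)$, split on whether the excursion duration exceeds a truncation level, kill the long-duration piece with the uniform exponential tail from Lemma~\ref{lem:unif-irred}, and control the displacement over a short excursion using \eqref{ass:X-diff-mom-p}. The technical core differs, though. The paper bounds $\Pr[\max_{m\le y}|X_m-X_0|\ge d]$ by a union bound over the $y$ individual increments each exceeding $d/y$, applying Markov's inequality at the full exponent $p$ to each, which yields $y^{1+p}d^{-p}$; it then takes the truncation level \emph{polynomial} in $d$, namely $y=d^{(p-q)/(1+p)}$, so that both terms are $O(d^{-q})$ (the exponential term becoming a stretched exponential). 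You instead bound the whole sum $S_r$ via Minkowski's inequality at a sacrificial exponent $q_1\in(\max\{1,q\},p)$ and apply Markov once, which yields $r^{q_1}d^{-q_1}$ and permits a \emph{logarithmic} truncation $r\asymp\log d$, the surplus $q_1-q$ absorbing the polylog factor. Both routes necessarily give up the endpoint $q=p$, and both deduce the moment bound by integrating a tail at a slightly larger exponent; your Minkowski step is the reason you need $q_1\ge 1$, which the hypothesis $p>1$ guarantees, and all your constants are uniform in $x$ because the excursion tail bound and $C_p$ are. In short: same decomposition and same key external inputs, with a mildly different (and arguably tidier, since it loses only a logarithm rather than a polynomial factor before the final balancing) treatment of the short-excursion maximum.
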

\begin{proof}
Conditional on $X_{\tau_n} = x$, we have
\[ 
\begin{split}
\Pr{[D_{n} \geq d]} &\leq \Pr{[\sigma_{n+1}   \geq y]} + \Pr{[ D_{n} \geq d, \, \sigma_{n+1}   < y]}\\
&\leq C \re^{-c y} + \Pr{\bigg[\max_{\tau_n \leq m \leq \tau_n+y} | X_m - X_{\tau_n} | \geq d \bigg]},
\end{split}
\]
for all $d \geq 0$ and $y>0$, by Lemma \ref{lem:unif-irred}.
Here, 
\[
\begin{split}
\Pr{\left[\max_{\tau_n \leq m \leq \tau_n+y} | X_m - X_{\tau_n} | \geq d \right]} &\leq \Pr{\left[\max_{\tau_n \leq m \leq \tau_n+y} \sum_{\ell=\tau_n}^{m-1} | X_{\ell+1} - X_\ell | \geq d \right]}\\
&\leq \Pr{\left[ \bigcup_{\tau_n \leq \ell \leq \tau_n+y-1} \Bigl\{ |X_{\ell+1} - X_\ell | \geq \textstyle\frac{d}{y} \Bigr\} \right]}\\
&\leq y C_p \bigl(\textstyle\frac{d}{y}\bigr)^{-p},
\end{split}
\]
which follows from the inequalities of Boole and Markov and the fact that
\begin{align*}
& \Exp[ |X_{\ell+1} - X_\ell|^p  \mid X_{\tau_n} = x] \\
& {} \quad {} = \sum_{z,i} \Exp{[ |X_{\ell+1} - X_\ell|^p \mid  X_\ell = z, \eta_\ell = i]}\Pr{[X_\ell = z, \eta_\ell = i \mid X_{\tau_n} = x ]} \leq C_p,
\end{align*}
by assumption \eqref{ass:X-diff-mom-p}.
 Then, taking $y = d^{(p-q)/(1+p)}$,
 where $q \in (0,p)$, we obtain
  $\Pr{[D_{n} \geq d \mid X_{\tau_n}=x]} = O ( d^{-q} )$, as claimed.  The final claim follows from the fact that
\[
\Exp{[D_{n}^\alpha \mid X_{\tau_n}=x]} = \sum_{d=1}^\infty \Pr{[D_{n}^\alpha \geq d \mid X_{\tau_n}=x]} \leq \int_0^\infty \Pr{[D_{n}^\alpha \geq t \mid X_{\tau_n}=x]} \ud t,
\]
which is finite when $\alpha \in (0,q)$, where $q$ can be arbitrarily close to $p$.
\end{proof}

We are now in a position to calculate the moments of $Y_{n+1} - Y_n$.
The first case to consider is when, for each $i$, $\mu (x, i)$ is asymptotically $d_i$.

\begin{lemma}
\label{lem:moments-constant-drifts}
Suppose that condition \eqref{ass:X-diff-mom-p} holds for some $p> 1$, and conditions 
\eqref{ass:lim-q} and \eqref{ass:mu1-const} hold. 
 Then there exists $\eps >0$  such that
\begin{align}
\label{eqn:Y_one-plus-moments}
\sup_x \Exp{[ |Y_{n+1} - Y_n|^{1+\eps} \mid Y_n = x ]} & < \infty. 
\end{align}
 Also, as $x \to \infty$,
\begin{align}
\label{eqn:Y-drift-positive}
\Exp{[ Y_{n+1} - Y_n \mid Y_n = x ]} &=  \frac{1}{\pi(0)} \sum_{i \in S}  d_i \pi(i)  + o(1). \end{align}
\end{lemma}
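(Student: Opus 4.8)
The plan is to handle the two assertions in turn. The bound \eqref{eqn:Y_one-plus-moments} will follow almost immediately from the deviation estimate of Lemma \ref{lem:D-moments}, while the drift formula \eqref{eqn:Y-drift-positive} will be obtained by a telescoping argument that expresses the expected increment of $(Y_n)$ as an expected occupation-time functional of $(\eta_k)$ over a single excursion, to which Lemma \ref{lem:occupation-limit} applies.

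For \eqref{eqn:Y_one-plus-moments}, I would note that the endpoint $\tau_{n+1}$ lies in the range defining $D_n$ in \eqref{eqn:D-def}, so that deterministically $|Y_{n+1}-Y_n| = |X_{\tau_{n+1}} - X_{\tau_n}| \leq D_n$. Since $p>1$, one may fix $q \in (1,p)$ and put $\eps := q-1 > 0$; then Lemma \ref{lem:D-moments} yields
\[
\sup_x \Exp[\,|Y_{n+1}-Y_n|^{1+\eps} \mid Y_n = x\,] \leq \sup_x \Exp[\,D_n^{q} \mid X_{\tau_n}=x\,] < \infty,
\]
which is \eqref{eqn:Y_one-plus-moments}.

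For the drift, recall that given $Y_n = x$ the increment $Y_{n+1}-Y_n$ has the law of $X_\tau - x$ under $\Pr_{x,0}$, and write the telescoping identity $X_\tau - X_0 = \sum_{k=0}^{\tau-1}(X_{k+1}-X_k)$. The first step is to pass the expectation inside the sum. Conditional Jensen and \eqref{ass:X-diff-mom-p} give $\Exp[\,|X_{k+1}-X_k| \mid \F_k\,] \leq C_p^{1/p}$, and since $\{k<\tau\} \in \F_k$, Lemma \ref{lem:unif-irred} gives $\Exp_{x,0}[\,|X_{k+1}-X_k|\1\{k<\tau\}\,] \leq C_p^{1/p}\,\Pr_{x,0}[\tau>k] \leq C' \re^{-ck}$, a summable bound uniform in $x$. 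Fubini's theorem then justifies the interchange, and using $\Exp[\,X_{k+1}-X_k \mid \F_k\,] = \mu_1(X_k,\eta_k)$ we obtain
\[
\Exp_{x,0}[\,Y_{n+1}-Y_n\,] = \Exp_{x,0} \sum_{k=0}^{\tau-1} \mu_1(X_k,\eta_k).
\]
Writing $g(y,i) := \mu_1(y,i) - d_i$ and splitting accordingly gives
\[
\Exp_{x,0}[\,Y_{n+1}-Y_n\,] = \sum_{i \in S} d_i \, \Exp_{x,0}\sum_{k=0}^{\tau-1}\1\{\eta_k = i\} + R(x), \quad R(x) := \Exp_{x,0}\sum_{k=0}^{\tau-1} g(X_k,\eta_k).
\]
By Lemma \ref{lem:occupation-limit} the main term converges to $\frac{1}{\pi(0)}\sum_{i\in S} d_i \pi(i)$, so it remains to show $R(x) \to 0$; this is the crux of the argument.

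The hard part is thus controlling $R(x)$, and the idea is that an excursion started from a large height $x$ spends little time in the region where the approximation $\mu_1(\cdot,i) \approx d_i$ fails. By \eqref{ass:X-diff-mom-p} and conditional Jensen $\mu_1$ is bounded, so $G := \sup_{y,i}|g(y,i)| < \infty$; and by \eqref{ass:mu1-const}, $g(y,i) \to 0$ uniformly in $i$ as $y \to \infty$. Fixing $\eps>0$ and choosing $K$ with $|g(y,i)| \leq \eps$ for all $y \geq K$, one has the pointwise bound $|\sum_{k=0}^{\tau-1} g(X_k,\eta_k)| \leq \eps \tau + G\sum_{k=0}^{\tau-1}\1\{X_k < K\}$. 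Taking expectations, the first term contributes at most $\eps \sup_x \Exp_{x,0}[\tau]$, which is finite by Lemma \ref{lem:unif-irred}. For the second, observe that $\min_{0\leq k<\tau} X_k < K$ forces the deviation $D_n \geq x-K$, whence $\sum_{k=0}^{\tau-1}\1\{X_k<K\} \leq \tau \1\{D_n > x-K\}$; Cauchy--Schwarz together with the uniform bound on $\Exp_{x,0}[\tau^2]$ (Lemma \ref{lem:unif-irred}) and the tail estimate $\Pr_{x,0}[D_n > x-K] = O((x-K)^{-q})$ (Lemma \ref{lem:D-moments}) then force $\Exp_{x,0}[\tau\1\{D_n>x-K\}] \to 0$. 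Hence $\limsup_{x\to\infty}|R(x)| \leq \eps \sup_x \Exp_{x,0}[\tau]$, and letting $\eps \downarrow 0$ gives $R(x) \to 0$, which completes the proof of \eqref{eqn:Y-drift-positive}.
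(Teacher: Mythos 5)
Your proposal is correct and follows essentially the same route as the paper: both reduce \eqref{eqn:Y-drift-positive} to $\Exp_{x,0}[X_\tau-X_0]=\Exp_{x,0}\sum_{k=0}^{\tau-1}\mu_1(X_k,\eta_k)$, apply Lemma \ref{lem:occupation-limit} to the main term, and kill the error via Lemma \ref{lem:D-moments} together with Cauchy--Schwarz and the exponential moments of $\tau$ from Lemma \ref{lem:unif-irred}. The only cosmetic differences are that you justify the Wald-type identity by Fubini with the summable bound $C_p^{1/p}\Pr_{x,0}[\tau>k]$ rather than by optional stopping of the Doob martingale, and you truncate at a fixed level $K$ rather than on the event $\{D<x^\gamma\}$; both choices are sound.
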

\begin{proof}
First, note that $| Y_{n+1} - Y_n | = | X_{\tau_{n+1}} - X_{\tau_n} | \leq | D_n|$, a.s.,
where $D_n$ is given by \eqref{eqn:D-def}.
Then the statement \eqref{eqn:Y_one-plus-moments} follows from Lemma~\ref{lem:D-moments} with \eqref{ass:X-diff-mom-p} for $p > 1$.

It remains to prove \eqref{eqn:Y-drift-positive};
 by the time-homogeneity of $(X_n,\eta_n)$ 
and since $Y_n = X_{\tau_n}$,
it suffices to consider $\Exp_{x,0} [ X_\tau - X_0 ]$.
The Doob decomposition for $X_n$ is
\[
X_n - X_0 = M_n + \sum_{k=0}^{n-1} \Exp{[X_{k+1} - X_k \mid X_k,\eta_k ]},
\]
where $M_n$ is a martingale with $M_0 = 0$. Hence, by definition of $\mu_1(x,i)$,
 \begin{align*}
X_n - X_0  = M_n + \sum_{k=0}^{n-1} \mu_1(X_k,\eta_k)  = M_n + \sum_{i \in S} \sum_{k=0}^{n-1} \mu_1(X_k,i) \1{\{\eta_k = i\}}.
\end{align*}
Since $\Exp{\tau} < \infty$, and $\Exp{[| M_{n+1} - M_n|  \mid \F_n ]} \leq 2 \Exp{[|X_{n+1} - X_n| \mid \F_n ]} \leq 2C_1$, a.s.,
(by the $p=1$ case of~\eqref{ass:X-diff-mom-p}), the Optional Stopping Theorem gives $\Exp M_\tau  = M_0 = 0$.  Therefore,
\begin{equation}\label{eqn:X-moms-sum}
\Exp_{x,0} {[ X_\tau - X_0 ]} = 
\sum_{i \in S} \Exp_{x,0} { \left[ \sum_{k=0}^{\tau-1} \mu_1(X_k,i) \1{\{\eta_k = i\}} \right]}.
\end{equation}
Now, let $D = \max_{0 \leq k \leq \tau} | X_k - X_0 |$, and set $A_x = \{ D < x^\gamma \}$, for some $\gamma \in (0,1)$.
Note that, conditional on $X_0 = x$ and $\eta_0 = 0$, the random variable $D$ has the same distribution as the random variable $D_{n}$ defined
at \eqref{eqn:D-def} given $X_{\tau_n} = x$, so by Lemma~\ref{lem:D-moments} we have
\begin{equation}
\label{e:Axc}
\Pr_{x,0} [ A_x^\rc] = \Pr_{x,0} {[ D \geq x^\gamma ]} = O( x^{-\gamma  } ).
\end{equation} 

Now, given $X_0 = x$ and $A_x$, we have 
for all $0 \leq k \leq \tau$ that $X_k \geq x - x^\gamma \geq x/2$, say,
for all $x$ sufficiently large. Thus, by
\eqref{ass:mu1-const},
for any $\theta >0$, there exists $x_0 < \infty$ such that, given $X_0 = x \geq x_0$,
\[ \max_{i \in S} \max_{0 \leq k \leq \tau} \left| \mu_1 (X_k , i ) - d_i \right| \1 (A_x) \leq \theta   , \as \]
Since $\max_{x,i} |\mu_1 (x, i)| < \infty$ and $\max_i | d_i | < \infty$, it follows that there
exists a constant $C < \infty$ such that, given $X_0 = x \geq x_0$,
\[ \max_{i \in S} \max_{0 \leq k \leq \tau} \left| \mu_1 (X_k , i ) - d_i \right|   \leq \theta + C \1 ( A_x^\rc)  , \as \]
Hence, given $X_0 = x \geq x_0$,
\[ \left|  \sum_{k=0}^{\tau-1} \mu_1(X_k,i) \1{\{\eta_k = i\}}   -
  \sum_{k=0}^{\tau-1} d_i \1{\{\eta_k = i\}}  \right| \leq \theta \tau + C \tau \1 ( A_x^\rc) , \as \]
Here,
by 
the Cauchy--Schwarz inequality,
\[
\Exp_{x,0} { [ \tau \1{(A_x^\rc )} ] } \leq (\Exp_{x,0} { [ \tau^2 ] })^{1/2} (\Pr_{x,0} {[ A_x^\rc ]})^{1/2} = O ( x^{-\gamma/2}) , 
\]
using \eqref{e:Axc} and the fact that $\tau$ has all moments, by Lemma \ref{lem:unif-irred}.
So, for any $\delta >0$, we can choose $x_1 < \infty$ sufficiently large so that, given $X_0 = x \geq x_1$,
\[ \max_i \left| \Exp_{x,0} \left[ \sum_{k=0}^{\tau-1} \mu_1(X_k,i) \1{\{\eta_k = i\}}  \right] -
 \Exp_{x,0} \left[ \sum_{k=0}^{\tau-1} d_i \1{\{\eta_k = i\}}  \right] \right| \leq \delta . \]
 Together with Lemma~\ref{lem:occupation-limit} and \eqref{eqn:X-moms-sum} this yields \eqref{eqn:Y-drift-positive}.
 \end{proof}

\begin{lemma}
\label{lem:Y-moments-markov}
Suppose that condition \eqref{ass:X-diff-mom-p} holds for some $p>2$, and conditions \eqref{ass:lim-q} and \eqref{ass:mu-lamperti} hold. 
 Then  there exists $\eps > 0$   such that
\begin{equation}
\label{eqn:Y-moment-bound-lamperti}
\sup_x \Exp{[ |Y_{n+1} - Y_n|^{2+\eps} \mid Y_n = x ]}  < \infty.
\end{equation}
 Also, as $x \to \infty$,
\begin{align}
\Exp{[ Y_{n+1} - Y_n \mid Y_n = x ]} &=  \frac{1}{\pi(0)} \sum_{i \in S} \frac{c_i \pi(i)}{x} + o(x^{-1});\label{eqn:Y-1st-mom-lamperti}\\
\Exp{[ (Y_{n+1} - Y_n)^2 \mid Y_n = x ]} &= \frac{1}{\pi(0)} \sum_{i \in S} s_i^2 \pi(i) + o(1).\label{eqn:Y-2nd-mom-lamperti}
\end{align}
If, in addition \eqref{ass:lim-q+} and \eqref{ass:mu-lamperti+} hold, then there exists $\delta > 0$ such that
\begin{align}
\Exp{[ Y_{n+1} - Y_n \mid Y_n = x ]} &=  \frac{1}{\pi(0)} \sum_{i \in S} \frac{c_i \pi(i)}{x} + O(x^{-1-\delta});\label{eqn:Y-1st-mom-lamperti+}\\
\Exp{[ (Y_{n+1} - Y_n)^2 \mid Y_n = x ]} &= \frac{1}{\pi(0)} \sum_{i \in S} s_i^2 \pi(i) + O(x^{-\delta});\label{eqn:Y-2nd-mom-lamperti+}
\end{align}
\end{lemma}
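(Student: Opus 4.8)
The plan is to follow the template of the proof of Lemma~\ref{lem:moments-constant-drifts}, analysing $\Exp_{x,0}[X_\tau - X_0]$ and $\Exp_{x,0}[(X_\tau-X_0)^2]$ (which suffices by time-homogeneity, since $Y_n = X_{\tau_n}$), but now tracking the error terms much more carefully, because the target orders $O(x^{-1})$ and $O(1)$ are smaller than the $O(1)$ target in Lemma~\ref{lem:moments-constant-drifts}. The moment bound \eqref{eqn:Y-moment-bound-lamperti} is immediate: $|Y_{n+1}-Y_n| \leq D_n$ a.s., so Lemma~\ref{lem:D-moments} with \eqref{ass:X-diff-mom-p} for $p>2$ gives $\sup_x\Exp[D_n^{2+\eps}\mid X_{\tau_n}=x]<\infty$ for some $\eps>0$. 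Throughout I work on the coupled space of Lemma~\ref{lem:coupling} and use the good event $A_x = \{D < x^\gamma\}$, where $D := \max_{0\leq k\leq\tau}|X_k - X_0|$ and $\gamma\in(0,1)$; by Lemma~\ref{lem:D-moments}, $\Pr_{x,0}[A_x^\rc] = O(x^{-\gamma q})$ for any $q<p$.

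For the first moment, the Doob decomposition $X_n - X_0 = M_n + \sum_{k=0}^{n-1}\mu_1(X_k,\eta_k)$ and optional stopping (valid since $\Exp\tau<\infty$ and the increments are bounded in $L^1$, exactly as in Lemma~\ref{lem:moments-constant-drifts}) give $\Exp_{x,0}[X_\tau-X_0] = \sum_{i\in S}\Exp_{x,0}[\sum_{k=0}^{\tau-1}\mu_1(X_k,i)\1\{\eta_k=i\}]$. On $A_x$ every $X_k\geq x/2$, so \eqref{ass:mu-lamperti} together with $|X_k^{-1}-x^{-1}| = O(x^{\gamma-2}) = o(x^{-1})$ yields $\mu_1(X_k,i) = c_i/x + o(x^{-1})$ uniformly in $k$; the leading term then contributes $\frac{c_i}{x}\cdot\frac{\pi(i)}{\pi(0)} + o(x^{-1})$ after applying Lemma~\ref{lem:occupation-limit}. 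On $A_x^\rc$ I bound $|\mu_1(X_k,i)-c_i/x|$ by a constant, so that piece is at most $C\,\Exp_{x,0}[\tau\1(A_x^\rc)] \leq C(\Exp_{x,0}\tau^2)^{1/2}(\Pr_{x,0}[A_x^\rc])^{1/2} = O(x^{-\gamma q/2})$ by Cauchy--Schwarz and Lemma~\ref{lem:unif-irred}. Summing over $i$ gives \eqref{eqn:Y-1st-mom-lamperti}.

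For the second moment I expand $(X_\tau-X_0)^2 = M_\tau^2 + 2M_\tau S_\tau + S_\tau^2$, where $S_\tau := \sum_{k=0}^{\tau-1}\mu_1(X_k,\eta_k)$. The predictable quadratic variation is $\langle M\rangle_n = \sum_{k=0}^{n-1}(\mu_2(X_k,\eta_k)-\mu_1(X_k,\eta_k)^2)$, and $\mu_2\leq C_2$ (using \eqref{ass:X-diff-mom-p} with $p\geq 2$) gives $\Exp\langle M\rangle_\tau\leq C_2\Exp\tau<\infty$; localizing at $\tau\wedge n$ and letting $n\to\infty$ (monotone convergence for $\langle M\rangle$, and $L^2$-boundedness for $M_{\tau\wedge n}$) yields $\Exp_{x,0} M_\tau^2 = \Exp_{x,0}\langle M\rangle_\tau$. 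On $A_x$ one has $\mu_1(X_k,\eta_k)^2 = O(x^{-2})$ and, by \eqref{ass:mu-lamperti}, $\mu_2(X_k,\eta_k) = s_{\eta_k}^2 + o(1)$, so Lemma~\ref{lem:occupation-limit} gives $\Exp_{x,0}\langle M\rangle_\tau = \frac{1}{\pi(0)}\sum_{i\in S}s_i^2\pi(i) + o(1)$, the $A_x^\rc$ part being absorbed through $\Exp_{x,0}[\tau\1(A_x^\rc)]\to 0$. The remaining terms are negligible: on $A_x$, $|S_\tau| = O(\tau/x)$, while on $A_x^\rc$ the bound $|S_\tau|\leq C_1\tau$ together with $\Exp_{x,0}[\tau^2\1(A_x^\rc)]\to 0$ shows $\Exp_{x,0} S_\tau^2 = o(1)$, whence $\Exp_{x,0}[M_\tau S_\tau] = o(1)$ by Cauchy--Schwarz. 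This establishes \eqref{eqn:Y-2nd-mom-lamperti}.

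The quantitative statements \eqref{eqn:Y-1st-mom-lamperti+} and \eqref{eqn:Y-2nd-mom-lamperti+} follow by rerunning the same two computations with explicit rates: \eqref{ass:mu-lamperti+} replaces the $o$-terms in $\mu_1,\mu_2$ by $O(x^{-1-\delta_1})$ and $O(x^{-\delta_1})$ on $A_x$, \eqref{ass:lim-q+} upgrades Lemma~\ref{lem:occupation-limit} to an $O(x^{-\delta})$ error, and the $A_x^\rc$ terms remain powers of $x$. The role of the hypothesis $p>2$ is decisive here, and controlling the $A_x^\rc$ contribution to the first moment is, I expect, the main obstacle: since that target is only $O(x^{-1})$, the crude estimate $O(x^{-\gamma/2})$ used in Lemma~\ref{lem:moments-constant-drifts} (where the target is $O(1)$) is far too weak, and one must instead exploit the full polynomial tail of $D$ by choosing $q\in(2,p)$ and $\gamma$ close to $1$, so that $\gamma q/2 > 1$ and the Cauchy--Schwarz bound $O(x^{-\gamma q/2})$ becomes genuinely $o(x^{-1})$ (indeed $O(x^{-1-\delta})$, as \eqref{eqn:Y-1st-mom-lamperti+} requires).
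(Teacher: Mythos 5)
Your proposal is correct. For the moment bound \eqref{eqn:Y-moment-bound-lamperti} and the first-moment asymptotics it coincides with the paper's argument: the paper likewise uses the Doob decomposition of $X_n$, optional stopping, the event $A_x=\{D<x^\gamma\}$ and Lemma~\ref{lem:occupation-limit}, and it too must beat the crude $O(x^{-\gamma/2})$ bound from Lemma~\ref{lem:moments-constant-drifts} — the paper does this by H\"older with exponent $s=p'/2>1$, giving $\Exp_{x,0}[\tau\1(A_x^{\rm c})]=O(x^{-2\gamma})$ with $\gamma\in(1/2,1)$, whereas you keep Cauchy--Schwarz and instead push $q$ close to $p$ and $\gamma$ close to $1$ so that $\gamma q/2>1$; both choices work. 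The genuine difference is in the second moment. You write $X_\tau-X_0=M_\tau+S_\tau$, compute $\Exp_{x,0} M_\tau^2=\Exp_{x,0}\langle M\rangle_\tau$ via the predictable quadratic variation, and kill the cross term and $\Exp_{x,0} S_\tau^2$ directly. The paper instead applies the Doob decomposition to $X_n^2$, obtaining $\Exp_{x,0}[X_\tau^2-X_0^2]=\sum_{i}(s_i^2+2c_i)\Exp_{x,0}[\sum_{k}\1\{\eta_k=i\}]+\text{error}$ (after checking uniform integrability of the stopped martingale via $|M_{n\wedge\tau}|\le D^2+2xD+C\tau$), and then extracts $(X_\tau-X_0)^2$ from the identity $X_\tau^2-X_0^2=(X_\tau-X_0)^2+2X_0(X_\tau-X_0)$, feeding in the already-established first-moment estimate to cancel the term $2x\,\Exp_{x,0}[X_\tau-X_0]=\frac{2}{\pi(0)}\sum_i c_i\pi(i)+o(1)$. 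The routes are of comparable difficulty: the paper's produces an order-one quantity by a cancellation and therefore needs the first moment to precision $o(x^{-1})$ as an input, while yours needs the $L^2$ optional-stopping identity together with a bound on $\Exp_{x,0} M_\tau^2$ uniform in $x$ (which you correctly obtain from $\Exp_{x,0}\langle M\rangle_\tau\le C\,\Exp_{x,0}\tau$); in the end both reduce to the same occupation-time and tail estimates, and your version of the quantitative statements \eqref{eqn:Y-1st-mom-lamperti+}--\eqref{eqn:Y-2nd-mom-lamperti+} goes through since every error term you produce is a power of $x$.
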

\begin{proof}
First, since $|Y_{n+1} - Y_n| \leq D_{n}$, with $D_n$  as defined at \eqref{eqn:D-def}, 
and because Lemma~\ref{lem:D-moments} implies that $\sup_x \Exp[ (D_{n})^{2+\eps} \mid X_{\tau_n} = x]   < \infty$, 
\eqref{eqn:Y-moment-bound-lamperti} follows.

The proof of \eqref{eqn:Y-1st-mom-lamperti} and \eqref{eqn:Y-2nd-mom-lamperti} using \eqref{ass:lim-q} and \eqref{ass:mu-lamperti} and the proof of \eqref{eqn:Y-1st-mom-lamperti+} and \eqref{eqn:Y-2nd-mom-lamperti+} using \eqref{ass:lim-q+} and \eqref{ass:mu-lamperti+} are essentially the same, the only difference being in the error terms associated to each expression.  We present the proof of \eqref{eqn:Y-1st-mom-lamperti+} and \eqref{eqn:Y-2nd-mom-lamperti+}; it should be clear how to adapt the argument to prove \eqref{eqn:Y-1st-mom-lamperti} and \eqref{eqn:Y-2nd-mom-lamperti}.

We proceed as in the proof of Lemma~\ref{lem:moments-constant-drifts}.  Indeed, we follow the reasoning from the second paragraph of that proof through to equation \eqref{eqn:X-moms-sum}, giving
\[
\Exp_{x,0}{[ X_\tau - X_0 ]} = 
\sum_{i \in S} \Exp_{x,0} { \left[ \sum_{k=0}^{\tau-1} \mu_1(X_k,i) \1{\{\eta_k = i\}}  \right]},
\]
and we let $D = \max_{0 \leq k \leq \tau} | X_k - X_0 |$, and set $A_x = \{ D < x^\gamma \}$ as before, but now
 we require $\gamma \in (1/2, 1)$.
Note that, conditional on $X_0 = x$ and $\eta_0 = 0$, the random variable $D$ has the same distribution as the random variable $D_{n}$ defined
at \eqref{eqn:D-def} given $X_{\tau_n} = x$, so by Lemma~\ref{lem:D-moments} we have that $\Pr_{x,0} {[ D \geq d  ]} = O( d^{-p'} )$ for some $p' > 2$ since $\tau$ has all moments and \eqref{ass:X-diff-mom-p} holds for some $p>2$.

Now, given $X_0 = x$ and $A_x$, we have $|X_k - x| \leq D < x^\gamma$ for $k \leq \tau$, so that, by \eqref{ass:mu-lamperti+},
\[
\left| \mu_1(X_k,i) - \frac{c_i}{x} \right| \leq \left| \frac{c_i}{X_k} - \frac{c_i}{x} \right| + o( (x-x^\gamma)^{-1-\delta_1}) = O(x^{\gamma-2}) + O(x^{-1-\delta_1}),
\]
uniformly for $0 \leq k \leq \tau$.  Therefore $\mu_1(X_k,i) \1{(A_x)} = (c_i/x + O(x^{\gamma-2}) + O(x^{-1-\delta_1}) )\1{(A_x)}$, which means that $\mu_1(X_k,i) = c_i/x + O(x^{\gamma-2}) + O(x^{-1-\delta_1}) + O(1)\1{(A_x^\rc)}$.
So,
\[
\Exp_{x,0}  \left[ \sum_{k=0}^{\tau-1} \mu_1(X_k,i) \1{\{\eta_k = i\}}  \right]
=\Exp_{x,0} { \left[ \left(\frac{c_i}{x} + O(x^{\gamma-2}) + O(x^{-1-\delta_1}) + O(1)\1{(A_x^\rc)} \right) \sum_{k=0}^{\tau-1} \1{\{\eta_k = i\}}  \right]},
\]
where the implicit constants are uniform in $x$ and in $i$.
By \eqref{ass:lim-q+} and the second statement in Lemma~\ref{lem:occupation-limit}, we have that
\begin{equation}
\label{eq:occupation_with_error}
 \Exp_{x, 0} {\left[\sum_{k=0}^{\tau-1} \1{\{ \eta_k = i\} }\right]} = \frac{\pi(i)}{\pi(0)}  + O (x^{-\delta'} ),
\end{equation}
for some $\delta' > 0$, so
\[
\Exp_{x,0} {[ X_\tau - X_0 ]} = \frac{1}{\pi(0)} \sum_{i \in S} \frac{c_i \pi(i)}{x} + O(x^{-1-\delta'}) + O(x^{\gamma-2}) + O(x^{-1-\delta_1}) + O(1) \Exp_{x,0} { [ \tau \1{(A_x^{\rm c})} ] }.
\]
Here, by H\"older's inequality, for all $r,s > 0$ with $r^{-1}+s^{-1} =1$,
\[
\Exp_{x,0} { [ \tau \1{(A_x^\rc )} ] } \leq (\Exp_{x,0} { [ \tau^r ] })^{1/r} (\Pr_{x,0} {[ A_x^\rc  ]})^{1/s}.
\]
Since $\tau$ has all moments, we can take $s = p'/2 > 1$, so that $\Exp_{x,0} { [ \tau \1{(A_x^\rc )} ] } = O(x^{-2\gamma})$.
Then, since $\gamma \in (1/2,1)$, $\delta_1 > 0$ and $\delta' > 0$ we have, for some $\delta'' > 0$,
\[
\Exp{[Y_{n+1} - Y_n \mid Y_n=x]} = \frac{1}{\pi(0)} \sum_{i \in S} \frac{c_i \pi(i)}{x} + O(x^{-1-\delta''}).
\]

To calculate the second moment of $X_\tau - X_0$, we will make repeated use of the algebraic identity $a^2 - b^2 = (a-b)^2 + 2b(a-b)$, which will help to simplify the calculations that follow.
Taking the Doob decomposition for $X_n^2$, we write
\[
\begin{split}
X_n^2 - X_0^2 &= M_n + \sum_{k=0}^{n-1} \Exp{[ X_{k+1}^2 - X_k^2 \mid X_k,\eta_k ]}\\
&= M_n + \sum_{k=0}^{n-1} \left( \Exp{[ (X_{k+1} - X_k)^2 \mid X_k,\eta_k ]} + 2X_k \Exp{[X_{k+1}-X_k \mid X_k,\eta_k ]} \right)\\
&= M_n + \sum_{k=0}^{n-1} \left( \mu_2(X_k, \eta_k) + 2X_k \mu_1(X_k,\eta_k) \right)\\
&= M_n + \sum_{i \in S} \sum_{k=0}^{n-1} ( s_i^2 + 2c_i + O(X_k^{-\delta_1}) )\1{\{\eta_k=i\}},
\end{split}
\]
by \eqref{ass:mu-lamperti+}, 
where $M_n$ is a martingale satisfying $M_0 = 0$.
Moreover,  given $X_0 = x$,
\[
\begin{split}
| M_{n \wedge \tau} | &\leq | X_{n \wedge \tau}^2 - X_0^2|  + C \tau\\
&= (X_{n \wedge \tau} - X_0)^2 + 2X_0 | X_{n \wedge \tau} - X_0| + C \tau\\
&\leq D^2 + 2xD + C\tau,
\end{split}
\]
where $D = \max_{0\leq k \leq \tau}|X_k-X_0|$ is as defined earlier, and $C < \infty$ is a constant.  Thus, $M_{n \wedge \tau}$ is uniformly integrable (in $n$) and so by the Optional Stopping Theorem $\Exp M_\tau = M_0 = 0$. Therefore,
\[ 
\Exp_{x,0} {[ X_\tau^2 - X_0^2  ]} = \sum_{i \in S} (s_i^2 + 2c_i)\Exp_{x,0} {\left[\sum_{k=0}^{\tau-1} \1{\{\eta_k=i\}}  \right]} 
  + \Exp_{x,0}{\left[ \sum_{k=0}^{\tau-1} O(X_k^{-\delta_1}) \right]}.
\]
As in the calculation of the first moment, we can bound the error term by bootstrapping on the event $A_x$: writing $O(X_k^{-\delta_1}) = O(x^{-\delta_1}) + O(1) \1{(A_x^{\rm c})}$, we get
\begin{align*}
\Exp_{x,0} {\left[ \sum_{k=0}^{\tau-1} O(X_k^{-\delta_1})  \right]} & = O(x^{-\delta_1}) + O(1)\Exp_{x,0} {[\tau \1{(A_x^\rc)} ] } \\
& = O(x^{-\delta_1}) + O(x^{-2\gamma}),
\end{align*}
as above, 
and therefore, by \eqref{eq:occupation_with_error},
\[
\Exp_{x,0} {[ X_\tau^2 - X_0^2 ]} = \frac{1}{\pi(0)} \sum_{i \in S} (s_i^2 + 2c_i)\pi(i) + O(x^{-\delta'}) + O(x^{-\delta_1}) + O(x^{-2\gamma}).
\]
Now we use $X_\tau^2 - X_0^2 = (X_\tau - X_0)^2 + 2X_0(X_\tau - X_0)$ to get
\[
\Exp_{x,0} {[ (X_\tau - X_0)^2  ]} = \frac{1}{\pi(0)} \sum_{i \in S} s_i^2 \pi(i) + O(x^{-\delta'''}),
\]
for some $\delta''' > 0$.  Finally, taking $\delta = \min\{\delta'',\delta'''\}$ yields \eqref{eqn:Y-1st-mom-lamperti+} and \eqref{eqn:Y-2nd-mom-lamperti+}, as required.
\end{proof}

\section{Proofs of main results}
\label{sec:proofs}

\subsection{Recurrence classification}

To prove Theorems \ref{thm:constant-drifts} and \ref{thm:lamperti_rough_then_sharp}, we use the increment moment estimates from Section~\ref{sec:moments} together with
some Foster--Lamperti conditions to classify the process $(Y_n)$, and then deduce the
classification for $(X_n)$ from the equivalence results in Section \ref{sec:recurrence-transience}.

For Theorem \ref{thm:lamperti_rough_then_sharp}, under Lamperti-type drift assumptions,  we apply 
 the following classification result.

\begin{lemma}[Lamperti]\label{lem:Lam}
Let $(Z_n )$ be an irreducible time-homogeneous Markov chain on $\ZP$.
Suppose that there exists $\eps>0$ such that
\begin{align}
\label{eqn:two-plus-moments}
\sup_z \Exp{[ |Z_{n+1}-Z_n|^{2+\eps} \mid Z_n = z ]} & < \infty ; \\
\label{eqn:positive-variance}
\liminf_{z \to \infty} \Exp{[ |Z_{n+1}-Z_n|^{2} \mid Z_n = z ]} & > 0 .\end{align}
Let $\mu_k(z) = \Exp{[ (Z_{n+1} - Z_n)^k \mid Z_n = z ]}$.  
\begin{itemize}
\item If $\liminf_{z \to \infty} ( 2z \mu_1(z) - \mu_2(z) )> 0$,  then $Z_n$ is transient.
\item If $|2z \mu_1(z)| \leq  \mu_2(z) + O(z^{-\delta})$, for some $\delta > 0$,  then $Z_n$ is null-recurrent.
\item If $\limsup_{z \to \infty} (2z \mu_1 (z) + \mu_2(z) ) < 0$, then $Z_n$ is positive-recurrent.
\end{itemize}
\end{lemma}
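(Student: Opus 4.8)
The plan is to treat the three regimes by the standard semimartingale (Foster--Lamperti) method: in each case one exhibits a test function $f \colon \ZP \to \R$ for which $g(z) := \Exp[f(Z_{n+1}) - f(Z_n) \mid Z_n = z]$ has a controlled sign outside a finite set, and then one appeals to the appropriate recurrence, transience, or positive-recurrence criterion for irreducible countable Markov chains (as developed by Lamperti \cite{lamp1,lamp3} and recorded in \cite{fmm}). The engine throughout is a second-order Taylor expansion of $g$: writing $\Delta = Z_{n+1} - Z_n$, one has $f(z + \Delta) - f(z) = f'(z)\Delta + \tfrac12 f''(z)\Delta^2 + (\text{remainder})$, whence $g(z) = f'(z)\mu_1(z) + \tfrac12 f''(z)\mu_2(z) + (\text{error})$. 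Hypothesis \eqref{eqn:two-plus-moments} controls the error uniformly in $z$ (the remainder is bounded by a multiple of $\Exp[|\Delta|^{2+\eps}]$ against the relevant derivative bound, via truncation), while \eqref{eqn:positive-variance} ensures the $\mu_2$ term does not degenerate. I would dispose of the two non-critical regimes first and the null case last.

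For positive recurrence take $f(z) = z^2$: here the expansion is exact, $g(z) = 2z\mu_1(z) + \mu_2(z)$, so the hypothesis $\limsup_{z\to\infty}(2z\mu_1(z) + \mu_2(z)) < 0$ gives $g(z) \le -\eps$ for all large $z$, and Foster's criterion (with finiteness of $\Exp[f(Z_{n+1}) \mid Z_n = z]$ supplied by \eqref{eqn:two-plus-moments}) yields positive recurrence. For transience take $f(z) = z^{-\beta}$ with $\beta > 0$ small; the expansion gives $g(z) = -\tfrac{\beta}{2} z^{-\beta - 2}\bigl(2z\mu_1(z) - (\beta+1)\mu_2(z)\bigr) + O(z^{-\beta-2-\eps})$. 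Since $\liminf(2z\mu_1 - \mu_2) > 0$ and $\mu_2$ is bounded (by \eqref{eqn:two-plus-moments}), choosing $\beta$ small makes $2z\mu_1 - (\beta+1)\mu_2$ bounded below by a positive constant for large $z$, so $g(z) \le 0$ outside a finite set; as $f$ is positive, bounded, and $f(z) \to 0$, the transience criterion applies.

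For the null case the task is to show recurrence together with the absence of positive recurrence. Recurrence I would approach with $f(z) = \log z$, for which the expansion gives $g(z) = \tfrac{1}{2z^2}\bigl(2z\mu_1(z) - \mu_2(z)\bigr) + O(z^{-2-\eps})$; the hypothesis $|2z\mu_1(z)| \le \mu_2(z) + O(z^{-\delta})$ forces $2z\mu_1 - \mu_2 \le O(z^{-\delta})$, so $g(z) \le O(z^{-2-\min(\delta,\eps)})$. This is summably small but not manifestly $\le 0$, so the bare logarithm gives only an almost-supermartingale; I would close the gap either by invoking the refined Lamperti recurrence criterion, which tolerates a positive drift of $f$ of this summable order, or by subtracting a lower-order correction term from $f$ (still with $f \to \infty$) engineered to absorb the residual positive contribution. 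To rule out positive recurrence I would use the complementary lower bound $2z\mu_1(z) \ge -\mu_2(z) - O(z^{-\delta})$ with a matching test function to bound the expected return time to the origin from below by $+\infty$, so that recurrence cannot be positive; combining the two gives null recurrence.

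The main obstacle is precisely this critical null regime. Away from criticality the Lyapunov computations are robust, but on the boundary the first-order drift $\mu_1(z) \sim \mu_2(z)/(2z)$ is exactly of the same order as the diffusive second-order term, so that any candidate correction $z^{-\rho}$ contributes to $g$ at the single scale $z^{-\rho - 2}$ through both its drift and its curvature; the signs must then be tracked with care, and the clean route is to appeal to the sharp form of Lamperti's criterion, which the polynomial error rate $O(z^{-\delta})$ comfortably meets on both sides. Establishing the \emph{absence} of positive recurrence is the delicate half, since it demands a lower Lyapunov estimate rather than the usual upper bound.
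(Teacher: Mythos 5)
First, a point of comparison: the paper does not prove this lemma at all. It is imported verbatim from the literature --- the transience and recurrence conditions from Theorem 3.2 of \cite{lamp1}, positive recurrence from Theorem 2.1 of \cite{lamp3}, and the sharpened null-recurrence condition from Theorem 3 of \cite{mai} --- so what you have written is a reconstruction of the proofs living in those references rather than an alternative to anything in the paper. Your reconstruction is sound where it is explicit, and it is the standard route: $f(z)=z^2$ with Foster's criterion for positive recurrence (the expansion is indeed exact there); $f(z)=(1+z)^{-\beta}$ with $\beta$ small for transience, where your expansion $g(z)=-\tfrac{\beta}{2}z^{-\beta-2}\bigl(2z\mu_1(z)-(\beta+1)\mu_2(z)\bigr)+(\text{error})$ is correct and the truncation works provided you also take $\beta<\eps$, since the contribution of the event $\{|Z_{n+1}-Z_n|>z/2\}$ is of order $z^{-2-\eps}$ by Markov's inequality and must be dominated by the main term of order $z^{-\beta-2}$; and, for recurrence in the critical case, the corrected function $f(z)=\log z-Az^{-\rho}$ with $0<\rho<\min(\delta,\eps)$, whose correction contributes a drift $\tfrac{A\rho}{2}z^{-\rho-2}\bigl(2z\mu_1(z)-(\rho+1)\mu_2(z)\bigr)\leq-\tfrac{A\rho^2}{2}z^{-\rho-2}\mu_2(z)(1+o(1))$, strictly negative at the dominant scale precisely because of \eqref{eqn:positive-variance}. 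This is the mechanism of \cite{mai}, and your alternative of quoting that refined criterion directly is equally legitimate (it is exactly what the paper does).

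The one place where your plan is missing the key idea rather than merely deferring details is the exclusion of positive recurrence. A ``matching test function'' in the Foster-converse style would require a nonnegative $f$ with $f(z)\to\infty$, \emph{nonnegative} drift outside a finite set, and \emph{$L^1$-bounded} increments, and no such $f$ exists here: the lower bound $2z\mu_1(z)\geq-\mu_2(z)-O(z^{-\delta})$ makes $z^{\gamma}$ a submartingale only for $\gamma\geq 2+\rho$ (the drift of $z^{\gamma}$ is $\tfrac{\gamma}{2}z^{\gamma-2}\bigl(2z\mu_1(z)+(\gamma-1)\mu_2(z)\bigr)+\cdots$, and one needs $\gamma-1>1$ to beat the lower bound using \eqref{eqn:positive-variance}), and such functions do not have $L^1$-bounded increments; conversely $f(z)=z$ has negative drift of order $1/z$. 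The actual argument, due to Lamperti \cite{lamp3}, is a passage-time moment estimate: one shows that if $\Exp_{z_0}[\sigma]<\infty$ for the return time $\sigma$ to a finite set, then optional stopping applied to the submartingale $Z_{n\wedge\sigma}^{2+\rho}$ (with $\rho<\eps$ so that \eqref{eqn:two-plus-moments} still controls the expansion and the overshoot) forces a contradiction with the boundedness of $\Exp[Z_{n\wedge\sigma}^{2+\rho}]$ that would follow from $\Exp[\sigma]<\infty$ and the increment moment bounds. Without this step --- or an explicit appeal to Theorem 2.2 of \cite{lamp3} or its refinements --- the null-recurrence bullet is only half proved; the rest of your outline I would accept as a correct account of the standard proof.
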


Lemma \ref{lem:Lam} is essentially due to Lamperti \cite{lamp1,lamp3}, although the form given here  is taken from Menshikov \emph{et al.}~\cite[Theorem 3]{mai}.
The conditions for recurrence and transience are contained in Theorem 3.2 of \cite{lamp1}, and the condition
for positive-recurrence is contained in Theorem 2.1 of \cite{lamp3}. The condition for null-recurrence here is slightly sharper than
Lamperti's original results~\cite{lamp3}.

\begin{proof}[Proof of Theorem \ref{thm:lamperti_rough_then_sharp}.]
We apply Lemma~\ref{lem:Lam} to classify $Z_n = Y_n$, and thus, by Lemma~\ref{lem:X-Y}, classify $X_n$. 
First, assuming \eqref{ass:X-diff-mom-p} for some $p>2$, \eqref{ass:lim-q} and \eqref{ass:mu-lamperti}, by Lemma~\ref{lem:Y-moments-markov} it is clear that
\eqref{eqn:two-plus-moments} and \eqref{eqn:positive-variance} hold for $Z_n = Y_n$.
Furthermore,
\[
\liminf_{ x \to \infty} 2x \Exp{[  Y_{n+1} - Y_n  \mid Y_n = x ]} = 
\limsup_{x \to \infty} 2x \Exp{[  Y_{n+1} - Y_n  \mid Y_n = x ]} = \frac{1}{\pi(0)}\sum_{i \in S} 2c_i \pi(i),
\]
and
\[
\liminf_{ x \to \infty} \Exp{[ |Y_{n+1} - Y_n|^2 \mid Y_n = x ]} = \limsup_{x \to \infty} \Exp{[ |Y_{n+1} - Y_n|^2 \mid Y_n = x ]} = \frac{1}{\pi(0)}\sum_{i \in S} s_i^2 \pi(i).
\]
By Lemma~\ref{lem:Lam}, $\sum_{i \in S} (2c_i - s_i^2)\pi(i) > 0$ implies transience, 
while $\sum_{i \in S} (2c_i + s_i^2)\pi(i) < 0$ implies positive-recurrence.  When  $| \sum_{i \in S} 2c_i \pi (i) | < \sum_{i \in S} s_i^2 \pi(i)$, we have
\[
\lim_{x \to \infty}( | 2x  \Exp{[  Y_{n+1} - Y_n  \mid Y_n = x ]} | - \Exp{[ |Y_{n+1} - Y_n|^2 \mid Y_n = x ]}) < 0,
\]
which means the middle condition of Lemma~\ref{lem:Lam} holds for \emph{any} $\delta > 0$, and therefore $Y_n$ is null-recurrent.

Now suppose that \eqref{ass:lim-q+} and \eqref{ass:mu-lamperti+} also hold. Then, by Lemma~\ref{lem:Y-moments-markov}, we have
\[
\begin{split}
 | 2x  \Exp{[  Y_{n+1} - Y_n  \mid Y_n = x ]} | &- \Exp{[ |Y_{n+1} - Y_n|^2 \mid Y_n = x ]}
\\
 &\quad\quad = \frac{1}{\pi(0)} \left( \left| \sum_{i \in S} 2c_i \pi (i) \right| - \sum_{i \in S} s_i^2 \pi(i)\right) + O(x^{-\delta})
\end{split}
\]
for some $\delta > 0$, which means that  $| \sum_{i \in S} 2c_i \pi (i) | = \sum_{i \in S} s_i^2 \pi(i)$ implies that $(Y_n)$ is null-recurrent, 
completing the classification of $(Y_n)$ and therefore of $(X_n)$.
\end{proof}

For Theorem \ref{thm:constant-drifts}
 we will apply the following condition.

\begin{lemma}
\label{lem:transience}
Let $(Z_n )$ be an irreducible time-homogeneous Markov chain on $\ZP$.
For $(Z_n)$ to be transient, it is sufficient that  there exists $\eps>0$ such that
\begin{align}
\label{eqn:one-plus-moments}
\sup_z \Exp [ | Z_{n+1} -Z_n |^{1+\eps} \mid Z_n = z ] < \infty , ~\text{and} \\
\label{eqn:positive-drift}
\liminf_{z \to \infty} \Exp [ Z_{n+1} - Z_n \mid Z_n = z ] > 0 .
\end{align}
\end{lemma}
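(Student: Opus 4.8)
The plan is to prove transience by exhibiting a bounded Lyapunov function $f$ that is a supermartingale for $(Z_n)$ outside a finite set, and then arguing that the chain escapes to infinity with positive probability. Because the drift is positive, $f$ must be decreasing, and because the only moment information available is the bound \eqref{eqn:one-plus-moments}, an exponential Lyapunov function is unavailable and a polynomial choice is forced. I would take $f(z) = (1+z)^{-\gamma}$ for a small exponent $\gamma \in (0,\eps)$; the constraint $\gamma < \eps$ will turn out to be exactly what is needed to absorb the large-jump error terms. (Reducing $\eps$ beforehand if necessary, we may also assume $\eps \in (0,1)$.)

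First I would extract from \eqref{eqn:positive-drift} a constant $\delta > 0$ and a threshold $z_0$ with $\Exp[Z_{n+1} - Z_n \mid Z_n = z] \geq \delta$ for all $z \geq z_0$. The heart of the argument is then the one-step estimate of $\Exp[f(Z_{n+1}) - f(Z_n) \mid Z_n = z]$ for large $z$. Writing $\Delta = Z_{n+1} - Z_n$, I would split on the event $\{|\Delta| \leq z/2\}$ and its complement. On the small-jump event a second-order Taylor expansion of $f$ applies with the intermediate point bounded below by $z/2$, so that $f''$ there is $O((1+z)^{-\gamma-2})$; the first-order term contributes $f'(z)\,\Exp[\Delta \1\{|\Delta| \leq z/2\}]$, which is at most $-c\,z^{-\gamma-1}$ once one checks, via $\Exp[|\Delta|\1\{|\Delta| > z/2\}] = O(z^{-\eps})$, that the truncated mean still exceeds $\delta/2$. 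On the large-jump event the boundedness $0 < f \leq 1$ together with $\Pr[|\Delta| > z/2] = O(z^{-1-\eps})$ (Markov's inequality applied to \eqref{eqn:one-plus-moments}) controls the contribution.

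The hard part will be the second-order term, since with only $(1+\eps)$ moments there is no finite second moment to feed a Taylor estimate directly; the device is truncation, $\Exp[\Delta^2 \1\{|\Delta| \leq z/2\}] \leq (z/2)^{1-\eps}\,\Exp[|\Delta|^{1+\eps}] = O(z^{1-\eps})$, so that the second-order term is $O(z^{-\gamma-1-\eps})$. Assembling the three pieces yields
\[ \Exp[f(Z_{n+1}) - f(Z_n) \mid Z_n = z] \leq -c\,z^{-\gamma-1} + C\,z^{-\gamma-1-\eps} + C\,z^{-1-\eps}, \]
and here the choice $\gamma < \eps$ makes the negative first-order term dominate both error terms, so the right-hand side is $\leq 0$ for all $z$ exceeding some $z_1$.

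Finally I would set $K = \lceil z_1 \rceil$, $B = \{0, 1, \ldots, K\}$, and $T_B = \min\{n \geq 0 : Z_n \in B\}$. The previous step shows that $f(Z_{n \wedge T_B})$ is a nonnegative supermartingale, so for $z > K$ the optional-stopping inequality and the boundedness of $f$ give $f(z) \geq \Exp_z[f(Z_{T_B})\,\1\{T_B < \infty\}] \geq f(K)\,\Pr_z[T_B < \infty]$, using that $f$ is decreasing so that $\min_{w \in B} f(w) = f(K)$. Hence $\Pr_z[T_B < \infty] \leq f(z)/f(K) \to 0$ as $z \to \infty$, so for $z$ large the chain avoids the finite set $B$ with positive probability; by irreducibility this is incompatible with recurrence, and $(Z_n)$ is therefore transient.
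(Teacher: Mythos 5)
Your proof is correct, and it is precisely the argument the paper has in mind: the authors omit the proof of Lemma~\ref{lem:transience}, stating only that it ``relies on demonstrating the existence of a suitable Lyapunov function with negative drift outside a bounded set, using Taylor's formula and some careful truncation,'' which is exactly your construction with $f(z)=(1+z)^{-\gamma}$, $\gamma<\eps$, the truncation at $|\Delta|\leq z/2$, and the concluding supermartingale/irreducibility step. The balance of exponents ($-\gamma-1$ beating $-\gamma-1-\eps$ and $-1-\eps$) checks out, so the argument is complete.
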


We omit the proof of Lemma \ref{lem:transience},
which is similar to the proof of Lemma \ref{lem:Lam} and relies 
on demonstrating the existence of a suitable
Lyapunov function with negative drift outside a bounded set,
using Taylor's formula and some careful truncation.

\begin{proof}[Proof of Theorem \ref{thm:constant-drifts}.]
Consider the Markov chain $(Y_n)$. Under the conditions of part (i) of the theorem,
Lemma~\ref{lem:moments-constant-drifts} implies that the hypotheses of Lemma~\ref{lem:transience} hold
for $Z_n = Y_n$, so that $(Y_n)$ is transient. Hence, by Lemma~\ref{lem:X-Y}, $(X_n)$ is also transient. 

As mentioned after the statement, part (ii) was obtained by Falin \cite{falin1}.
Our results furnish a different proof: Lemma~\ref{lem:moments-constant-drifts} gives positive-recurrence for $(Y_n)$
by  Foster's criterion (e.g.\ Theorem 2.2.3 of \cite{fmm}), so, by Lemma~\ref{lem:X-Y}, $(X_n)$ is also positive-recurrent. 
\end{proof}

\subsection{Convergence in distribution}
\label{sec:weak_convergence}

The first step in the proof of Theorem \ref{thm:weak_limit}
is to apply a result of Lamperti \cite{lamp2} to obtain
a weak limit for the embedded Markov chain $(Y_n)$.  Recall the distribution function $F_{\alpha,\theta}$ as defined at \eqref{eqn:Fdef}.

\begin{lemma}
\label{lem:embedded_weak_limit}
Suppose $(X_n,\eta_n)$ is a Markov chain satisfying \eqref{ass:X-diff-mom-p} for some $p>4$, \eqref{ass:lim-q} and \eqref{ass:mu-lamperti}.
Suppose that the matrix $q$ appearing in \eqref{ass:lim-q} is aperiodic.
Suppose also that 
  $\sum_{i \in S} (2c_i + s_i^2)\pi(i) > 0$. Define $\alpha$ and $\theta$ as at \eqref{eq:alpha_theta}.
Then, for any $x \in \RP$,
\[  \lim_{n \to \infty} \Pr \left[ n^{-1/2} Y_n \leq x \right] = F_{\alpha,\theta} \big( x \sqrt{\pi (0 )} \big) .\]
\end{lemma}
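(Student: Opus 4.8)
The plan is to read off the limit for the embedded chain $(Y_n)$ directly from Lamperti's weak convergence theorem \cite{lamp2} for non-homogeneous random walks on $\ZP$. In the form needed here, this asserts that if $(Z_n)$ is an irreducible Markov chain on $\ZP$ with $\Exp[(Z_{n+1}-Z_n)^2 \mid Z_n = z] \to \tilde s^2 \in (0,\infty)$ and $2z\,\Exp[Z_{n+1}-Z_n \mid Z_n = z] \to 2\tilde c$ as $z \to \infty$, with $2\tilde c + \tilde s^2 > 0$, and if there is $\eps > 0$ with $\sup_z \Exp[|Z_{n+1}-Z_n|^{4+\eps} \mid Z_n = z] < \infty$, then $n^{-1/2} Z_n$ converges weakly to the distribution $F_{\tilde\alpha,\tilde\theta}$ of \eqref{eqn:Fdef}, where $\tilde\alpha = \tfrac12 + \tilde c/\tilde s^2$ and $\tilde\theta = 2\tilde s^2$. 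I note at the outset that the aperiodicity of $q$ plays no role in this embedded-chain statement, since the $n^{-1/2}$ rescaling annihilates any lattice effects; aperiodicity will instead be needed only when $\eta_n$ is reincorporated in the proof of Theorem \ref{thm:weak_limit}.

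First I would verify these hypotheses for $(Y_n)$. Irreducibility is supplied by Lemma \ref{lem:X-Y}. The drift and variance asymptotics are read off from \eqref{eqn:Y-1st-mom-lamperti} and \eqref{eqn:Y-2nd-mom-lamperti}: as $x \to \infty$,
\[
2x\,\Exp[Y_{n+1}-Y_n \mid Y_n = x] \to \frac{2}{\pi(0)}\sum_{i \in S} c_i\pi(i) =: 2\tilde c, \quad \Exp[(Y_{n+1}-Y_n)^2 \mid Y_n = x] \to \frac{1}{\pi(0)}\sum_{i \in S} s_i^2\pi(i) =: \tilde s^2 > 0,
\]
the positivity of $\tilde s^2$ holding because at least one $s_i^2$ is nonzero. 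The condition $2\tilde c + \tilde s^2 > 0$ is equivalent, after multiplying by $\pi(0) > 0$, to the hypothesis $\sum_{i \in S}(2c_i + s_i^2)\pi(i) > 0$. The one hypothesis not recorded in Lemma \ref{lem:Y-moments-markov} is the $(4+\eps)$-moment bound, and this is precisely where $p > 4$ in \eqref{ass:X-diff-mom-p} enters: since $|Y_{n+1}-Y_n| \le D_n$, Lemma \ref{lem:D-moments} applied with some $q \in (4,p)$ gives $\sup_x \Exp[|Y_{n+1}-Y_n|^{4+\eps} \mid Y_n = x] < \infty$ for a suitable $\eps > 0$.

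Granting the theorem, it yields that $n^{-1/2} Y_n$ converges weakly to $F_{\tilde\alpha,\tilde\theta}$. The factors $\pi(0)^{-1}$ cancel in the ratio $\tilde c/\tilde s^2$, so $\tilde\alpha = \tfrac12 + \big(\sum_{i} c_i\pi(i)\big)/\big(\sum_{i} s_i^2\pi(i)\big) = \alpha$, while $\tilde\theta = 2\tilde s^2 = \theta/\pi(0)$, with $\alpha,\theta$ as in \eqref{eq:alpha_theta}. It remains to rewrite $F_{\alpha,\theta/\pi(0)}$ in the asserted form, which is a one-line scaling identity for the gamma family: if $Z \sim \Gamma(\alpha,\theta)$ then $Z/\pi(0) \sim \Gamma(\alpha,\theta/\pi(0))$, whence for $x \ge 0$
\[
F_{\alpha,\theta/\pi(0)}(x) = \Pr\big[\sqrt{Z/\pi(0)} \le x\big] = \Pr\big[\sqrt{Z} \le x\sqrt{\pi(0)}\big] = F_{\alpha,\theta}\big(x\sqrt{\pi(0)}\big),
\]
and, $F_{\alpha,\theta}$ being continuous, pointwise convergence of the distribution functions then holds at every $x \ge 0$, giving the claim.

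I expect the parameter bookkeeping and the scaling identity to be routine; the genuinely delicate step is confirming that the invoked weak-convergence theorem applies to the embedded chain, in particular securing its higher-moment hypothesis. This is exactly why the present lemma demands $p > 4$, in contrast to the recurrence classification of Theorem \ref{thm:lamperti_rough_then_sharp}, for which the $(2+\eps)$-moment bound available under $p > 2$ sufficed.
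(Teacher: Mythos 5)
Your proposal is correct and follows essentially the same route as the paper: verify the hypotheses of Lamperti's Theorem 2.1 of \cite{lamp2} for $(Y_n)$ using Lemmas \ref{lem:X-Y}, \ref{lem:Y-moments-markov} and \ref{lem:D-moments} (the last supplying the higher-moment bound from $p>4$), obtain the limit $F_{\alpha,\theta/\pi(0)}$, and convert it via the scaling identity $F_{\alpha,\theta}(\beta x)=F_{\alpha,\theta/\beta^2}(x)$ with $\beta=\sqrt{\pi(0)}$. Your side observation that aperiodicity of $q$ is not used at this stage is also consistent with the paper, where it enters only when $\eta_n$ is reintroduced in the proof of Theorem \ref{thm:weak_limit}.
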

\begin{proof}
If \eqref{ass:X-diff-mom-p} holds for some $p>4$, then a consequence of Lemma \ref{lem:D-moments} is that
\[ \sup_x \Exp \left[ | Y_{n+1} - Y_n |^4 \mid Y_n  = x \right] < \infty .\]
Now we apply Theorem 2.1 of \cite{lamp2} to the Markov chain $(Y_n)$,
using the increment moment estimates of Lemma~\ref{lem:Y-moments-markov}
and noting the remark preceding the theorem in \cite{lamp2}, to obtain
\[ \lim_{n \to \infty} \Pr \left[ n^{-1/2} Y_n \leq x \right] = F_{\alpha,\theta / \pi (0)} (x) .\]
Taking $x = \beta x$ in \eqref{eqn:Fdef} and using the change of variable $v =u/\beta$ one observes
the scaling relation, valid for any $\beta > 0$,
$F_{\alpha, \theta} ( \beta x ) = F_{\alpha, \theta/\beta^2} (x)$,
which implies the result.
\end{proof}

\begin{remark}
If in addition \eqref{ass:lim-q+} and \eqref{ass:mu-lamperti+} hold, then in the case $\sum_{i \in S} (2c_i + s_i^2)\pi(i) = 0$
it follows from Lemma 2.1 of \cite{lamp2} that $n^{-1/2} Y_n \to 0$ in probability; cf Remark \ref{rmk:weak_limit}(iii).
\end{remark}

The next goal is to deduce from the weak limit for $Y_n$ a weak limit for $X_n$.
To do so, we need (i) to control the value of the process $(X_n)$ between successive observations of the embedded process, and (ii) to account for the change of time.
First we address point~(i).
For each $n \in \ZP$, let $N(n) := \max \{ k : \tau_k \leq n \}$,
so that $\tau_{N(n)} \leq n < \tau_{N(n) +1}$.

\begin{lemma}
\label{lem:weak_discrepancy}
Suppose that $(X_n,\eta_n)$ satisfies \eqref{ass:X-diff-mom-p} for some $p>2$ and \eqref{ass:lim-q}.
Then, as $n \to \infty$, $n^{-1/2} | X_n - X_{\tau_{N(n)} } | \to 0$ in probability.
\end{lemma}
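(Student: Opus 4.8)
The plan is to dominate the discrepancy by the maximal deviation of a single excursion and then control that maximum uniformly over all excursions begun before time $n$. Since $\tau_{N(n)} \leq n < \tau_{N(n)+1}$, the instant $n$ falls inside the $N(n)$th excursion, which yields the pathwise bound
\[
| X_n - X_{\tau_{N(n)}} | \leq \max_{\tau_{N(n)} \leq j \leq \tau_{N(n)+1}} | X_j - X_{\tau_{N(n)}} | = D_{N(n)},
\]
with $D_m$ as defined at \eqref{eqn:D-def}. Hence it suffices to prove that $n^{-1/2} D_{N(n)} \to 0$ in probability.

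The only real difficulty is that both the excursion index $N(n)$ and the starting height $X_{\tau_{N(n)}} = Y_{N(n)}$ are random, and $Y_{N(n)}$ may be large. The first issue I would remove by the crude deterministic bound $N(n) \leq n$: each excursion lasts $\sigma_{m+1} \geq 1$, so $\tau_k \geq k$ and therefore $N(n) \leq \tau_{N(n)} \leq n$. Consequently, for any $\eps > 0$,
\[
\{ D_{N(n)} \geq \eps \sqrt n \} \subseteq \bigcup_{m=0}^{n} \{ D_m \geq \eps \sqrt n \} .
\]
The second issue is handled by Lemma \ref{lem:D-moments}, whose tail estimate is uniform over the starting height; this is exactly the feature that lets the random value of $Y_{N(n)}$ be ignored.

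Because \eqref{ass:X-diff-mom-p} holds for some $p > 2$, I would fix $q \in (2,p)$ and invoke Lemma \ref{lem:D-moments} to get a constant $C < \infty$ with $\Pr[ D_m \geq d \mid X_{\tau_m} = x ] \leq C d^{-q}$ for all $m$, $x$ and $d$. Taking expectations over $X_{\tau_m}$ gives $\Pr[ D_m \geq \eps \sqrt n ] \leq C \eps^{-q} n^{-q/2}$ for every $m$, so a union bound over the at most $n+1$ events above yields
\[
\Pr[ n^{-1/2} D_{N(n)} \geq \eps ] \leq (n+1) C \eps^{-q} n^{-q/2} = O( n^{1-q/2} ) .
\]
Since $q > 2$, the exponent $1 - q/2$ is negative, so this probability tends to $0$; combined with the pathwise domination this establishes the claim. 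The role of the hypothesis $p > 2$ is precisely to make $q/2 > 1$, so that the decay of the tail beats the factor $n$ produced by the union bound.
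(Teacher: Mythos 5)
Your proof is correct and follows essentially the same route as the paper's: both dominate $|X_n - X_{\tau_{N(n)}}|$ by $\max_{k \leq n} D_k$ using the crude bound $N(n) \leq n$, and both then rely on the uniform (in the starting height) estimates of Lemma~\ref{lem:D-moments} with some $q \in (2,p)$, the point being that $q/2 > 1$ beats the factor of $n$ coming from the number of excursions. The only difference is cosmetic: you conclude via a union bound on tail probabilities, whereas the paper truncates at level $n^{\gamma}$ and shows the slightly stronger statement that $n^{-1/2} \max_{k \leq n} D_k \to 0$ in $L^1$.
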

\begin{proof}
Since $\sigma_{k} \geq 1$, we have that $N(n) \leq n$, a.s. Hence $| X_n - X_{\tau_{N(n)}} | \leq
\max_{k \leq n} D_k$, where $D_k$ is as defined at \eqref{eqn:D-def}. Thus
it suffices to show that $n^{-1/2} \max_{k \leq n} D_k \to 0$ in probability. For any
$\gamma >0$, we have
\begin{align*}
\max_{k \leq n} D_k & \leq n^\gamma + \max_{k \leq n} \left( D_k \1 \{ D_k > n^\gamma \} \right) \\
& \leq n^\gamma + \sum_{k=0}^n D_k \1 \{ D_k > n^\gamma \} .
\end{align*}
Since  \eqref{ass:X-diff-mom-p} holds for   $p>2$,
  Lemma \ref{lem:D-moments} shows
  $\max_k \Exp [ D_k^q ] < \infty$ for some $q >2$, so
\[ \Exp \left[ D_k \1 \{ D_k > n^\gamma \} \right] \leq \Exp \left[ D_k^q  D_k^{1-q} \1 \{ D_k > n^\gamma \} \right] \leq n^{-\gamma(q-1)} \Exp \left[ D_k^q \right] .\]
It follows that
\[ n^{-1/2} \Exp \max_{k \leq n} D_k  = O (n^{\gamma-(1/2)}) + O (n^{(1/2)-\gamma(q-1)} ) ,\]
which is $o(1)$ provided we choose (as we may) $\frac{1}{2(q-1)} < \gamma < \frac{1}{2}$.
Thus $n^{-1/2} \max_{k \leq n} D_k \to 0$ in $L^1$, and hence in probability.
\end{proof}

Next we turn to point (ii) mentioned above. For our purposes, the following renewal-type result will suffice.

\begin{lemma}
\label{lem:N_limit}
Suppose $(X_n,\eta_n)$ is a Markov chain satisfying \eqref{ass:X-diff-mom-p} for some $p>2$, \eqref{ass:lim-q} and \eqref{ass:mu-lamperti}.
Suppose also that 
  $\sum_{i \in S} (2c_i + s_i^2)\pi(i) > 0$.

Then, as $n \to \infty$, $n^{-1} N(n) \to \pi (0)$ in probability.
\end{lemma}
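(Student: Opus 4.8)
The plan is to establish the stronger statement that $n^{-1}N(n) \to \pi(0)$ almost surely, which immediately yields the claimed convergence in probability. Throughout I may assume $\eta_0 = 0$ (so that $\tau_0 = 0$), since $\tau_0 < \infty$ a.s.\ by Lemma~\ref{lem:unif-irred} and the initial segment does not affect the $n \to \infty$ behaviour. Because each $\sigma_j < \infty$ a.s., we have $\tau_k < \infty$ for every $k$, and hence $N(n) \to \infty$ a.s.\ as $n \to \infty$. The two-sided bound $\tau_{N(n)} \le n < \tau_{N(n)+1}$ gives
\[
\frac{\tau_{N(n)}}{N(n)} \le \frac{n}{N(n)} < \frac{\tau_{N(n)+1}}{N(n)+1}\cdot\frac{N(n)+1}{N(n)},
\]
so it suffices to prove that $\tau_m/m \to 1/\pi(0)$ a.s.; composing this with $N(n)\to\infty$ and letting $n\to\infty$ then gives $n/N(n)\to 1/\pi(0)$, that is, $N(n)/n\to\pi(0)$, a.s.

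To analyse $\tau_m = \sum_{j=1}^m \sigma_j$ I would use the Doob decomposition relative to the filtration $(\F_{\tau_j})$. Writing $g(x):=\Exp_{x,0}[\tau]$, the strong Markov property (as in Section~\ref{sec:overview}) gives $\Exp[\sigma_j\mid\F_{\tau_{j-1}}] = g(Y_{j-1})$, so that
\[
\tau_m = \sum_{j=1}^m g(Y_{j-1}) + M_m, \qquad M_m := \sum_{j=1}^m\bigl(\sigma_j - g(Y_{j-1})\bigr),
\]
where $(M_m)$ is a martingale with $M_0=0$. Summing the conclusion of Lemma~\ref{lem:occupation-limit} over $i \in S$ (and using $\sum_{i}\pi(i)=1$) shows $g(x)\to 1/\pi(0)$ as $x\to\infty$, while Lemma~\ref{lem:unif-irred} gives $\sup_x g(x) = \sup_x\Exp_{x,0}[\tau] < \infty$. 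For the predictable part, fix $\eps>0$ and choose $K$ with $|g(x)-1/\pi(0)|<\eps$ for $x>K$; then
\[
\Bigl| \tfrac1m\sum_{j=1}^m g(Y_{j-1}) - \tfrac{1}{\pi(0)} \Bigr| \le \eps + \Bigl(\sup_x g(x) + \tfrac{1}{\pi(0)}\Bigr)\,\tfrac1m\sum_{j=0}^{m-1}\1\{Y_j \le K\}.
\]
The remaining occupation average tends to $0$ a.s.\ because $(Y_n)$ is an irreducible chain on $\ZP$ that is \emph{not} positive-recurrent: the hypothesis $\sum_{i}(2c_i+s_i^2)\pi(i)>0$ excludes the positive-recurrent case of Theorem~\ref{thm:lamperti_rough_then_sharp} (via Lemma~\ref{lem:X-Y}), so $(Y_n)$ is null-recurrent or transient, and in either case the Ces\`aro occupation fraction of the finite set $\{0,\dots,K\}$ vanishes a.s.\ (the $Y$-analogue of Lemma~\ref{Xn-positive-recur}, together with the fact that a transient chain visits each state finitely often). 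Letting $m\to\infty$ and then $\eps\downarrow 0$ gives $\frac1m\sum_{j=1}^m g(Y_{j-1})\to 1/\pi(0)$ a.s.

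For the martingale part, Lemma~\ref{lem:unif-irred} also yields $\sup_x\Exp_{x,0}[\tau^2]<\infty$, so the increments of $M$ have uniformly bounded conditional second moments, $\Exp[(\sigma_j-g(Y_{j-1}))^2\mid\F_{\tau_{j-1}}]\le \sup_x \Exp_{x,0}[\tau^2] =: C' <\infty$. Orthogonality of martingale increments then gives $\sum_j j^{-2}\Exp[(\sigma_j-g(Y_{j-1}))^2] \le C'\sum_j j^{-2}<\infty$, so the martingale strong law of large numbers (Kronecker's lemma applied to the $L^2$-bounded martingale $\sum_j j^{-1}(\sigma_j-g(Y_{j-1}))$) yields $M_m/m\to 0$ a.s. Combining with the previous paragraph gives $\tau_m/m\to 1/\pi(0)$ a.s., and the reduction of the first paragraph completes the proof.

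The main obstacle is the occupation-time estimate $\frac1m\sum_{j=0}^{m-1}\1\{Y_j\le K\}\to 0$ in the null-recurrent regime: unlike in a classical renewal setting, the excursion durations $\sigma_j$ are not i.i.d., and their conditional means $g(Y_{j-1})$ approach the target value $1/\pi(0)$ only when $Y_{j-1}$ is large, so one must quantify that $(Y_n)$ spends a vanishing fraction of its time near the origin. This is precisely where the exclusion of positive-recurrence---and hence the vanishing of the Ces\`aro occupation measure of $(Y_n)$---is essential; the remaining ingredients (a martingale law of large numbers and the standard inversion of $\tau_{N(n)}\le n<\tau_{N(n)+1}$) are routine.
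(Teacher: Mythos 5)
Your proof is correct, and it reaches the conclusion by a genuinely different route from the paper --- indeed it establishes the stronger a.s.\ statement. The paper also reduces matters to a law of large numbers for $\tau_n=\sum_{k<n}\sigma_{k+1}$, but gets there by launching, at the start of each excursion, a fresh independent copy $(\eta^\star_{k,\cdot})$ of the limiting chain coupled to $(\eta_n)$ as in Lemma~\ref{lem:coupling}, truncating the excursion lengths via $\chi_s(x)=x\1\{x\le s\}$, and showing that $\frac1n\sum_k\chi_s(\sigma_{k+1})$ and $\frac1n\sum_k\chi_s(\sigma^\star_{k+1})$ agree up to an $L^1$-error controlled by the coupling failure probability when $X_{\tau_k}$ is large and by the null property \eqref{eqn:Y-null} when it is small; the i.i.d.\ variables $\sigma^\star_{k+1}$, with mean $\pi(0)^{-1}$, then supply the law of large numbers, and only convergence in probability is obtained. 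You instead absorb the single-excursion coupling entirely into Lemma~\ref{lem:occupation-limit} (summed over $i\in S$, giving $g(x)=\Exp_{x,0}[\tau]\to1/\pi(0)$) and replace the comparison with i.i.d.\ copies by the Doob decomposition $\tau_m=\sum_j g(Y_{j-1})+M_m$ together with the martingale strong law, which is justified since $\sup_x\Exp_{x,0}[\tau^2]<\infty$ by Lemma~\ref{lem:unif-irred}. Both arguments hinge on the same point --- the process spends a vanishing fraction of excursions with $Y_j$ in a bounded set, which is where the hypothesis $\sum_i(2c_i+s_i^2)\pi(i)>0$ and Theorem~\ref{thm:lamperti_rough_then_sharp} enter --- and your appeal to the a.s.\ occupation-time theorem for the irreducible non-positive-recurrent chain $(Y_n)$ is precisely the a.s.\ counterpart of the paper's $L^1$ statement \eqref{eqn:Y-null}. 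What your route buys is a.s.\ convergence of $n^{-1}N(n)$ and a cleaner argument avoiding the multi-excursion coupling construction; the one caveat (shared with the paper's own proof, so not a defect of your write-up relative to it) is that in the boundary case $|\sum_i 2c_i\pi(i)|=\sum_i s_i^2\pi(i)$ the first three bullets of Theorem~\ref{thm:lamperti_rough_then_sharp} do not apply, so strictly one should invoke a ``no positive recurrence'' criterion of Lamperti type rather than merely the failure of the sufficient condition in Lemma~\ref{lem:Lam}.
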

\begin{proof}
Under the conditions of the lemma,  Theorem \ref{thm:lamperti_rough_then_sharp} shows that $X_n$ (and hence
$Y_n$)
is \emph{null}, i.e., null-recurrent or transient. In particular, 
for any $x \geq 0$, 
 \begin{equation}
 \label{eqn:Y-null}
  \lim_{n \to \infty} \Exp \Bigg[ \frac{1}{n} \sum_{k=0}^{n-1}    \1 \{ X_{\tau_k} \leq x \} \Bigg] = 0. \end{equation}
 
We use an   extension of the coupling given in Lemma \ref{lem:coupling} to multiple excursions. 
We construct on the same probability space $(X_n, \eta_n)$ together with a sequence $(\eta^\star_{k,n} )$
of  copies (for $k \in \ZP$)
of the Markov chain $(\eta^\star_n)$ as follows. At each $\tau_k$, $k \in \ZP$,
start $(\eta^\star_{k,n})_{n \geq 0}$, an independent
 copy of $(\eta^\star_n)_{n \geq 0}$, from $\eta^\star_{k,0} = \eta_{\tau_k} = 0 \in S$, coupled to $(\eta_n)_{n \geq \tau_k}$ as described
in Lemma \ref{lem:coupling}; denote by $\sigma^\star_{k+1}$ the number of steps
until   $ \eta^\star_{k,n} $ returns to $0$. 

Extending the notation $E_n$ defined at \eqref{eq:E_n_def},
 we write $E_{k,n} = \cap_{0 \leq \ell \leq n} \{ \eta_{\tau_k +\ell} = \eta^\star_{k, \ell} \}$,
 the event that the coupling started at $\tau_k$ succeeds for $n$ steps.

Now we use this coupling construction and the null property \eqref{eqn:Y-null} to show that $n^{-1} \tau_n \to \pi (0)^{-1}$ in probability.
For $s >0$, denote $\chi_s (x) := x \1 \{ x \leq s\}$. Note that
\begin{align*}
\Bigg| \frac{1}{n} \sum_{k=0}^{n-1} \chi_s \left( \sigma_{k+1} \right)  -  \frac{1}{n} \sum_{k=0}^{n-1}   \sigma_{k+1}  
\Bigg|
 \leq \frac{1}{n} \sum_{k=0}^{n-1}   \sigma_{k+1}   \1 \{ \sigma_{k+1} > s \} .\end{align*}
 Here $\Exp [  \sigma_{k+1}   \1 \{ \sigma_{k+1} > s \} ] \leq s^{-1} \Exp [ \sigma_{k+1}^2 ]$, say, so that,  by Lemma \ref{lem:unif-irred},
 \[ \lim_{s \to \infty} \sup_k \Exp \left[   \sigma_{k+1}   \1 \{ \sigma_{k+1} > s \} \right] = 0.\]
  A similar argument holds for $\sigma_{k+1}^\star$. Hence, for any $\eps >0$, there exists $s_0 < \infty$ such that
 \begin{align}
 \label{eqn:excursion-truncation}
 \Exp \Bigg|
\Bigg( \frac{1}{n} \sum_{k=0}^{n-1}   \sigma_{k+1}    -  \frac{1}{n} \sum_{k=0}^{n-1}    \sigma_{k+1}^\star   
\Bigg)   - 
\Bigg(  \frac{1}{n} \sum_{k=0}^{n-1} \chi_s \left( \sigma_{k+1} \right)    -  \frac{1}{n} \sum_{k=0}^{n-1} \chi_s \left( \sigma_{k+1}^\star \right)   \Bigg)
\Bigg|
 \leq \eps , \end{align}
 for all $s \geq s_0$ and all $n$. 
 On the event $E_{k,s}$ (the coupling started at $\tau_k$ succeeds for $s$ steps)
 we have $\chi_s (\sigma_{k+1} ) = \chi_s (\sigma^\star_{k+1} )$. 
 Then, for any $x > 0$,
 \[ 
 \left| \chi_s \left( \sigma_{k+1} \right)    -    \chi_s \left( \sigma_{k+1}^\star \right)  \right|
 \leq s \1 ( E_{k,s}^{\rm c} ) \1 \{ X_{\tau_k} > x \} + s \1 \{ X_{\tau_k} \leq x \} .\]
Now
\begin{align*}
 \Pr [ E_{k,s}^{\rm c} \cap \{   X_{\tau_k} > x \} ] & \leq \sup_{y >x} \Pr [  E_{k,s}^{\rm c}  \mid X_{\tau_k} =y , \, \eta_{\tau_k} = \eta^\star_{k,0} = 0 ] \\
 & = \sup_{y > x} \Pr [  E_{s}^{\rm c}  \mid X_0 =y , \, \eta_0 = \eta^\star_0 = 0 ] .\end{align*}
 So for   fixed $s \geq s_0$, Lemma \ref{lem:coupling} shows we may choose $x \geq x_0$ large enough such that, 
 \[ \Exp \frac{1}{n} \sum_{k=0}^{n-1} s \1 ( E_{k,s}^{\rm c} ) \1 \{ X_{\tau_k} > x \} \leq \eps , \]
 for all $n$. 
Combining this with the null property \eqref{eqn:Y-null}, we obtain that, for fixed $s \geq s_0$,
\[ \limsup_{n \to \infty} \Exp \Bigg[  \frac{1}{n} \Bigg| \sum_{k=0}^{n-1} \chi_s (\sigma_{k+1} ) - \sum_{k=0}^{n-1} \chi_s (\sigma^\star_{k+1} ) \Bigg|  \Bigg] \leq \eps .\]
Thus with \eqref{eqn:excursion-truncation} we conclude that
 \[ \limsup_{n \to \infty} 
 \Exp \Bigg| \frac{1}{n} \sum_{k=0}^{n-1}   \sigma_{k+1}    -  \frac{1}{n} \sum_{k=0}^{n-1}   \sigma_{k+1}^\star   \Bigg|
 \leq 2 \eps .\]
 Since $\eps >0$ was arbitrary,  and $\sigma_{k+1}^\star$ are i.i.d.\ random variables with mean $\pi(0)^{-1}$, it follows that
 $n^{-1} \tau_n \to \pi (0)^{-1}$ in probability.
 
 The claimed result now follows by inverting the law of large numbers: for example,
 \begin{align*}
 \Pr \left[ n^{-1} N(n) > \pi(0) + \eps \right] \leq \Pr \left[ \tau_{\lceil (\pi(0)+\eps) n \rceil} \leq n \right]
 \leq \Pr \left[ \frac{\tau_{\lceil (\pi(0)+\eps) n \rceil}}{\lceil (\pi(0)+\eps) n \rceil} \leq \frac{1}{\pi(0) + \eps} \right] ,\end{align*}
 which tends to $0$ as $n \to \infty$ for any $\eps>0$; similarly in the other direction.
\end{proof}

In the proof of Theorem \ref{thm:weak_limit} we will use two facts about convergence in distribution that we now
recall (see e.g.\ \cite[p.\ 73]{durrett}).
First, if sequences of random variables
$\xi_n$ and $\zeta_n$ are such that $\zeta_n \to \zeta$ in distribution for some random variable $\zeta$ and $|\xi_n -\zeta_n| \to 0$ in probability,
then $\xi_n \to \zeta$ in distribution (this is \emph{Slutsky's theorem}). Second,
if $\zeta_n \to \zeta$ in distribution and $\alpha_n \to \alpha$ in probability, then $\alpha_n \zeta_n \to \alpha \zeta$ in distribution.

\begin{proof}[Proof of Theorem \ref{thm:weak_limit}.]
First, since $n^{-1} N(n) \to \pi (0)$ in probability (Lemma \ref{lem:N_limit}), 
\[ \lim_{n \to \infty} \Pr \Bigg[ \frac{X_{\tau_{N(n)}} }{\sqrt{N(n)}} \cdot \sqrt{\frac{N(n)}{n}} \leq x \Bigg]
= \lim_{n \to \infty} \Pr \Bigg[ \frac{X_{\tau_{N(n)}} }{\sqrt{N(n)}} \leq \frac{x}{\sqrt{\pi(0)}} \Bigg]
= F_{\alpha, \theta} (x) ,\]  
by Lemma \ref{lem:embedded_weak_limit} and the fact that $\lim_{n \to \infty} N(n) = \infty$ a.s.
Together with Lemma \ref{lem:weak_discrepancy} and Slutsky's theorem, this shows that
\begin{equation}
\label{eqn:X-weak-convergence}
 \lim_{n \to \infty} \Pr [ n^{-1/2} X_n \leq x ] =  \lim_{n \to \infty} \Pr [ n^{-1/2} X_{\tau_{N(n)}} \leq x ] =  F_{\alpha, \theta} (x) .\end{equation}
Next we prove the joint convergence of $(X_n, \eta_n)$.
For $m \in \ZP$, let 
$R_{n,m} = n^{-1/2} | X_{n-m} - X_n |$.
Then, by the $p=1$ case of \eqref{ass:X-diff-mom-p}, we have $\Exp [R_{n,m}
] \leq C m n^{-1/2}$ for some finite constant $C$. Hence, for fixed $m$, as $n \to \infty$,
$R_{n,m} \to 0$ in $L^1$ and hence in probability.

Fix $x \in (0,\infty)$. Then, for any $\eps \in (0,x)$,
\begin{align*}
\Pr [ n^{-1/2} X_n > x, \, \eta_n = k ] \leq \Pr [ n^{-1/2} X_{n-m} > x-\eps , \, \eta_n = k ] + \Pr [ R_{n,m} \geq \eps ] .\end{align*}
Here
\begin{align}
\label{eqn:joint_upper}
\Pr [ n^{-1/2} X_{n-m} > x-\eps , \, \eta_n = k ] & = \sum_{y : n^{-1/2} y > x -\eps} \Pr [ X_{n-m} = y ] \Pr [ \eta_n = k \mid X_{n-m} = y ] .\end{align}
Again we use the coupling of Lemma \ref{lem:coupling} and the notation $E_n$ from \eqref{eq:E_n_def}. 
Note that
\[ \left| \Pr_{y,i,i} [ \eta_m = k ] - \pi (k) \right| \leq \Pr_{y,i,i} [ E_m^\rc ] + \left| \Pr_{y,i,i} [ \eta^\star_m = k ] - \pi (k) \right| .\]
Here, since $(\eta^\star_n)$ is an aperiodic, irreducible finite Markov chain
with stationary distribution $\pi$,
  $\Pr_{y,i,i} [ \eta^\star_m = k ] = \Pr[ \eta^\star_m = k \mid \eta^\star_0 = i]$
converges (uniformly over $i$ and $y$) to $\pi (k)$ as $m \to \infty$. So, for any  $\delta >0$,
we may choose $m_0 < \infty$ such that, for all $i$ and all $y$,
\[ \left| \Pr_{y,i} [ \eta_{m_0} = k ] - \pi (k) \right| \leq \Pr_{y,i,i} [ E_{m_0}^\rc ] + \delta .\]
By Lemma \ref{lem:coupling}, we may then choose $y_0 < \infty$ large enough so that, for all $y \geq y_0$,
\[ \left| \Pr  [ \eta_{m_0} = k \mid X_0 = y ] - \pi (k) \right| \leq 2 \delta .\]
Now taking $n$ large enough so that $(x-\eps) n^{1/2} > y_0$,
it follows from \eqref{eqn:joint_upper} that  
\begin{align*}
\Pr [ n^{-1/2} X_n > x, \, \eta_n = k ] \leq \Pr [ R_{n,m_0} \geq \eps ] + ( \pi (k) + 2\delta )   \Pr [ n^{-1/2} X_{n-m_0} > x -\eps ]. \end{align*}
We now let $n \to \infty$ and apply \eqref{eqn:X-weak-convergence} to obtain
\[ \limsup_{n \to \infty} \Pr [ n^{-1/2} X_n > x, \, \eta_n = k ] \leq ( \pi (k) + 2\delta ) \left( 1 - F_{\alpha, \theta} (x-\eps) \right) .\]
Since $\eps>0$ and $\delta>0$ were arbitrary, and $F_{\alpha, \theta}$ is continuous, it follows that
\[ \limsup_{n \to \infty} \Pr [ n^{-1/2} X_n > x, \, \eta_n = k ] \leq  \pi (k) \left( 1 -   F_{\alpha, \theta} (x ) \right) , ~~\text{for all}~ x \in (0,\infty).\]
A similar argument in the other direction, starting from the inequality 
\begin{align*}
\Pr [ n^{-1/2} X_n > x, \, \eta_n = k ] \geq \Pr [ n^{-1/2} X_{n-m} > x+\eps , \, \eta_n = k ] - \Pr [ R_{n,m} \geq \eps ]  \end{align*}
yields the complementary $\liminf$ statement, so that 
\begin{equation}
\label{eq:upper_tail_limit}
 \lim_{n \to \infty} \Pr [ n^{-1/2} X_n > x, \, \eta_n = k ] =  \pi (k) \left( 1 -  F_{\alpha, \theta} (x ) \right) , ~~\text{for all}~ x \in (0,\infty). \end{equation}
The statement in the theorem now follows from the fact that, by \eqref{eq:upper_tail_limit},
\[ \lim_{n \to \infty} \Pr [ n^{-1/2} X_n \leq x, \, \eta_n = k ] = \lim_{n \to \infty} \Pr[ \eta_n = k] - \pi (k) \left( 1 -  F_{\alpha, \theta} (x ) \right), \]
where $\lim_{n \to \infty} \Pr[ \eta_n = k] = \pi(k)$ by taking $x \downarrow 0$ in \eqref{eq:upper_tail_limit}.
\end{proof}

\appendix

\section{Proof of coupling lemma}
\label{sec:technical_appendix}

In this appendix we give the deferred technical proof of our coupling result, Lemma~\ref{lem:coupling}.

\begin{proof}[Proof of Lemma \ref{lem:coupling}.]
As commented on earlier, the proof follows an almost standard coupling argument.  Indeed, since the first two statements of the lemma will be satisfied for any coupling of $(X_n,\eta_n)$ and $(\eta^\star_n)$ on a common probability space, in order to also prove (\ref{eqn:eta-eta-star-coupling}/\ref{eqn:eta-eta-star-coupling2}) it makes sense to use a maximal coupling of $\eta_n$ and $\eta^\star_n$, which we will construct in a step-wise fashion.  For us, the condition that $q_x(i,j)$ has a limit as $x \to \infty$ means that the probability of decoupling at any step will be small, provided that $X_n$ stays sufficiently large. This introduces some complications to the standard coupling arguments, as we will need to keep control of the variation of $X_n$.

We construct the Markov chain $(X_n,\eta_n,\eta^\star_n)$ by describing a single step:
\begin{itemize}
\item If $\eta_n \neq \eta^\star_n$ then produce $(X_{n+1},\eta_{n+1})$ from $(X_n, \eta_n)$
 according to the transition probabilities $p(x,i,y,j)$, and produce $\eta^\star_{n+1}$ from $\eta^\star_n$
independently according to the transition probabilities $q(i,j)$.
\item Otherwise, given $\eta_n = \eta^\star_n = i$ and $X_n = x$, we use a maximal coupling (see, for example, Lindvall~\cite[pp.\ 18--20]{lindvall}) to produce $(\eta_{n+1}, \eta^\star_{n+1})$ via
\[
\Pr{[\eta_{n+1} = j, \eta^\star_{n+1} = k]} = \begin{cases}
\min\{ q_x(i,j) , q(i,k) \} &\text{for $j=k$},\\
\\
\displaystyle\frac{ (q_x(i,j) - q(i,j))^+(q(i,k)-q_x(i,k))^+ }{ \frac{1}{2} \sum_{\ell \in S} |q_x(i,\ell)-q(i,\ell)| }&\text{for $j \neq k$}.
\end{cases}
\]
Then, given $\eta_{n+1} = j$ we produce $X_{n+1}$ via
\[
\Pr{[X_{n+1} = y \mid \eta_{n+1}  =j ]} = \frac{p(x,i,y,j)}{\sum_{z \in \ZP} p(x,i,z,j)}.
\]
\end{itemize}
It is a simple matter to check that we have constructed a valid coupling of $(X_n,\eta_n)$ and $\eta^\star_n$.  Indeed, making use of the fact that
\[
\sum_{\ell \in S} | q_x(i,\ell) - q(i,\ell)| = \sum_{\ell \in S} (q_x(i,\ell) - q(i,\ell))^+ + \sum_{\ell \in S} (q(i,\ell)-q_x(i,\ell))^+
\]
and
\[
0 = \sum_{\ell \in S} \bigl(q_x(i,\ell) - q(i,\ell)\bigr) = \sum_{\ell \in S} (q_x(i,\ell) - q(i,\ell))^+ - \sum_{\ell \in S} (q(i,\ell)-q_x(i,\ell))^+,
\]
calculation shows that $\Pr{[\eta_{n+1} = j \mid (X_n,\eta_n) = (x,i)]} = q_x(i,j)$ and $\Pr{[\eta^\star_{n+1} = j \mid \eta^\star_n = i]} = q(i,j)$. Then we see that
\[
\Pr{[ (X_{n+1}, \eta_{n+1}) = (y,j) \mid (X_n,\eta_n) = (x,i) ]} = \frac{ p(x,i,y,j)}{\sum_{z \in \ZP} p(x,i,z,j)}q_x(i,j) = p(x,i,y,j).
\]
This verifies the coupling construction. Note that, with this coupling,
\begin{equation}
\label{decoupling}
\Pr{[\eta_{n+1} \neq \eta^\star_{n+1} \mid X_n = x, \eta_n =\eta^\star_n = i]} = \frac{1}{2}\sum_{j \in S} |q_x(i,j) - q(i,j)|.
\end{equation}
It remains to prove \eqref{eqn:eta-eta-star-coupling} and \eqref{eqn:eta-eta-star-coupling2}. 
First in the case of \eqref{eqn:eta-eta-star-coupling}, for which we assume \eqref{ass:lim-q},
we give the argument in detail; we will then indicate how to modify the argument to prove \eqref{eqn:eta-eta-star-coupling2}.

Given $\eps >0$ and $n <\infty$, choose $x_0$ so that $\max_i \sum_{j \in S} |q_x(i,j) - q(i,j)| \leq \frac{\eps}{n}$ for all $x \geq x_0$;
this is possible by assumption \eqref{ass:lim-q}.

Let  $A_k = \{ X_k \geq x_0 \}$, and recall from \eqref{eq:E_n_def} that 
$E_k = \cap_{0 \leq \ell \leq k} \{ \eta_\ell = \eta^\star_\ell  \}$.
Then,
\[
\begin{split}
\Pr{[E_{k+1}^\rc \mid E_k \cap A_k ]} &= \Pr{[ \eta_{k+1} \neq \eta_{k+1}^\star \mid E_k \cap A_k ]}\\
&\leq \max_i \sup_{x\geq x_0} \Pr{[\eta_{k+1} \neq \eta_{k+1}^\star \mid X_k = x, \eta_k = \eta_k^\star = i ]},
\end{split}
\] so that, given $X_0 = x, \eta_0 = \eta^\star_0 = i$,
\[
\Pr{[ E^\rc_{k+1} ]} \leq \Pr{[ E^\rc_{k+1} \mid E_k \cap A_k]} + \Pr{[E^\rc_k]} + \Pr{[A^\rc_k]} \leq \frac{\eps}{2n} +\Pr{[E^\rc_k]} + \Pr{[A^\rc_k]},
\]
which in turn implies that
\[
\Pr{[E^\rc_n]} \leq \frac{\eps}{2} + \sum_{k=0}^{n-1} \Pr{[ A^\rc_k ]} \leq \frac{\eps}{2} + n \Pr{\left[\min_{0\leq k\leq n-1} X_k < x_0\right]}.
\]
To complete the proof we need to show that, for $x$ sufficiently large,
\begin{equation}\label{eqn:prob-X-small} \Pr{\left[\min_{0\leq k\leq n-1} X_k < x_0 \bigmid X_0 = x, \eta_0=\eta^\star_0 = i \right]} \leq \frac{\eps}{2n}, \text{ for all $i$}.
\end{equation}
But
\begin{align*}
\Pr \bigg[ \min_{0 \leq k \leq n-1} X_k < x_0 \bigmid   X_0 = x, & \, \eta_0=\eta^\star_0 = i \bigg]  \leq \Pr_{x,i} \bigg[ \max_{0 \leq k \leq n-1}| X_k - X_0 | > x-x_0  \bigg]
\\
& \leq \Pr_{x,i} {\bigg[ \bigcup_{0\leq k \leq n-1} |X_{k+1} - X_k| > \frac{x-x_0}{n}   \bigg]}\\
& \leq n \max_{y,j} \Pr{\bigg[ | X_{k+1} - X_k | > \frac{x-x_0}{n} \bigmid X_k = y ,\eta_k =j \bigg]},
\end{align*}
so \eqref{eqn:prob-X-small} will follow from 
\[
\lim_{r \to \infty} \max_{x, i} \Pr{[ |X_{n+1} - X_n| > r \mid X_n = x, \eta_n=i ]} = 0,
\]
which in turn follows from condition~\eqref{ass:X-diff-mom-p} with $p>1$ and Markov's inequality; indeed, 
\begin{align*}
\max_{x,i} \Pr{[ |X_{n+1} - X_n| > r \mid X_n=x, \eta_n=i ]} & \leq \max_{x,i} \frac{\Exp{[ |X_{n+1} - X_n|^{1+\eps} \mid X_n=x, \eta_n=i ]} }{r^{1+\eps}} \\
& \leq \frac{C_{1+\eps}}{r^{1+\eps}}.
\end{align*}
Therefore $\Pr{ [ E^\rc_n \mid X_0 = x, \eta_0 = \eta^\star_0 = i ]} \leq \eps$ for all $i$ and sufficiently large $x$, and since $\eps$ was arbitrary, this proves \eqref{eqn:eta-eta-star-coupling}.

The proof of \eqref{eqn:eta-eta-star-coupling2} is similar, now assuming \eqref{ass:lim-q+}. We set $n = n (x) = \lfloor A \log x \rfloor$.
Now we modify the definition of $A_k$ to be $A_k = \{ X_k \geq x/2 \}$.
Then, \eqref{decoupling} with \eqref{ass:lim-q+} gives
\[ 
\Pr{[ E^\rc_{k+1} ]} \leq  \Pr{[E^\rc_k]} + \Pr{[A^\rc_k]} + O (x^{-\delta_0}) ,\]
from which we have
\[ \Pr{[ E^\rc_{n(x)} ]} \leq O (x^{-\delta_0/2} ) + A \log x  \Pr{\left[\min_{0\leq k\leq n(x)-1} X_k < x / 2\right]} .\]
The final probability in the last display we estimate in exactly the same way as in the previous argument, replacing the previous $x_0$
by $x/2$ and the previous $n$ by $n(x)$, and we again find a term that decays as a power of $x$. Thus we obtain  \eqref{eqn:eta-eta-star-coupling2}.
\end{proof}

\section*{Acknowledgement}

This work was supported by the Engineering and Physical Sciences Research Council [grant number EP/J021784/1].

\end{document}